%% LyX 2.2.2 created this file.  For more info, see http://www.lyx.org/.
%% Do not edit unless you really know what you are doing.
\documentclass[11pt,british,refpage,intoc,bibliography=totoc,index=totoc,BCOR=7.5mm,captions=tableheading]{extarticle}

\usepackage[T1]{fontenc}
\usepackage[latin9]{inputenc}
\usepackage[a4paper]{geometry}
\geometry{verbose,tmargin=0.9in,bmargin=1.1in,lmargin=1.1in,rmargin=1.1in}
\usepackage{color}
\usepackage{babel}
\usepackage{amsmath}
\usepackage{amsthm}
\usepackage{amssymb}
\usepackage{stmaryrd}
\usepackage[numbers]{natbib}
\usepackage[unicode=true,pdfusetitle,
 bookmarks=true,bookmarksnumbered=true,bookmarksopen=false,
 breaklinks=false,pdfborder={0 0 0},pdfborderstyle={},backref=false,colorlinks=true]
 {hyperref}
\hypersetup{
 linkcolor=black, citecolor=blue, urlcolor=black, filecolor=black, pdfpagelayout=OneColumn, pdfnewwindow=true, pdfstartview=XYZ, plainpages=false}
\usepackage{breakurl}

\makeatletter
%%%%%%%%%%%%%%%%%%%%%%%%%%%%%% Textclass specific LaTeX commands.
\numberwithin{equation}{section}
\numberwithin{figure}{section}
\theoremstyle{plain}
\newtheorem{thm}{\protect\theoremname}[section]
  \theoremstyle{definition}
  \newtheorem{defn}[thm]{\protect\definitionname}
  \theoremstyle{remark}
  \newtheorem{rem}[thm]{\protect\remarkname}
  \theoremstyle{plain}
  \newtheorem{lem}[thm]{\protect\lemmaname}
  \theoremstyle{plain}
  \newtheorem{cor}[thm]{\protect\corollaryname}
  \theoremstyle{plain}
  \newtheorem{prop}[thm]{\protect\propositionname}
  \theoremstyle{definition}
  \newtheorem*{example*}{\protect\examplename}

\@ifundefined{date}{}{\date{}}
%%%%%%%%%%%%%%%%%%%%%%%%%%%%%% User specified LaTeX commands.
\usepackage{caption}
\usepackage[nottoc]{tocbibind}

\makeatother

  \providecommand{\corollaryname}{Corollary}
  \providecommand{\definitionname}{Definition}
  \providecommand{\examplename}{Example}
  \providecommand{\lemmaname}{Lemma}
  \providecommand{\propositionname}{Proposition}
  \providecommand{\remarkname}{Remark}
\providecommand{\theoremname}{Theorem}

\begin{document}

\title{Backward problems for stochastic differential equations on the Sierpinski
gasket}

\author{Xuan~Liu\thanks{Mathematical Institute, University of Oxford, Oxford, OX2 6GG, United
Kingdom. Email: \protect\href{mailto:xuan.liu@maths.ox.ac.uk}{xuan.liu@maths.ox.ac.uk}}\hspace{0.7em}and Zhongmin~Qian\thanks{Research supported partly by an ERC grant(Grant Agreement No. 291244
ESig). Mathematical Institute, University of Oxford, Oxford, OX2 6GG,
United Kingdom. Email: \protect\href{mailto:zhongmin.qian@maths.ox.ac.uk}{zhongmin.qian@maths.ox.ac.uk}}}
\maketitle
\begin{abstract}
In this paper, we study the non-linear backward problems (with deterministic
or stochastic durations) of stochastic differential equations on the
Sierpinski gasket. We prove the existence and uniqueness of solutions
of backward stochastic differential equations driven by Brownian martingale
(defined in Section \ref{sec:-1}) on the Sierpinski gasket constructed
by S.~Goldstein and S.~Kusuoka. The exponential integrability of
quadratic processes for martingale additive functionals is obtained,
and as an application, a Feynman-Kac representation formula for weak
solutions of semi-linear parabolic PDEs on the gasket is also established.
\end{abstract}
\bigskip{}

\emph{\small{}}%
\begin{minipage}[t]{0.7\paperwidth}%
\textbf{\small{}Keywords}{\small{}\enskip{}Sierpinski gasket, backward
stochastic differential equations, semi-linear parabolic equations\medskip{}
}{\small \par}

\textbf{\small{}Mathematics Subject Classification}{\small{} }\textbf{\small{}(2000)}{\small{}\enskip{}28A80,
60H10, 60H30}{\small \par}%
\end{minipage}{\small \par}

\section{\label{sec:}Introduction}

Interests on diffusions on fractals initially came from mathematical
physics such as percolation clusters near the percolation thresholds.
(See \citep{RT83} and references therein.) Diffusions on fractals
are also of significance from mathematical point of views:\smallskip{}

\ (i) Calculus can be established on a manifold with a differential
structure, while there are interesting spaces appearing in applications
which possess no suitable smooth structures. It is important and tempting
to consider calculus on fractals as archetypical examples of singular
spaces; 

(ii) By utilizing the general theory of Dirichlet forms, linear analysis
may be well studied via the corresponding Markov semigroups in a quite
general setting.

\smallskip{}

Dynamic systems in physics (e.g. fluid dynamics) and mathematical
finance (e.g. optimal stochastic controls) are usually described by
non-linear partial differential equations. As suggested by the situation
on Euclidean spaces, knowledge on non-linear PDEs on the Sierpinski
gasket can be obtained by studying the corresponding stochastic dynamic
systems, which will be the subject of this paper as a part of an investigation
of non-linear analysis on fractals.

Brownian motion on the Sierpinski gasket was first constructed by
S.~Goldstein \citep{Gold87} and S.~Kusuoka \citep{Ku87} as the
limit of sequences of random walks. A similar construction was used
by M.~Barlow and E.~Perkins in \citep{BP88}, where heat kernel
estimates were obtained. Brownian motion on the gasket is a diffusion
process symmetric with respect to the Hausdorff measure, and therefore,
gives rise to a Dirichlet form. The standard Laplacian on the Sierpinski
gasket is defined to be the associated self-adjoint operator. J.~Kigami
\citep{Ki89} gave an analytic construction of the Dirichlet form
using products of stochastic matrices. There have been many works
on the study of Brownian motion and the Laplacian on the Sierpinski
gasket (see, for example, \citep{BP88,FS92,KL93,BK97,Ki98}, and etc.).
Several definitions of gradients of functions with finite energies
have been introduced and studied with applications to the study of
non-linear PDEs and differential forms on fractals (cf. \citep{Ki93,Str00,Tep00,CS03,Hin10,HRT13,HT15,BK16}
and references in these works). See Remark \ref{rem:-2}-(ii) for
more details and comments on the connection between the definition
of gradients introduced in Definition \ref{def:-2-1} below and those
in the aforementioned literature.

In this paper, we develop a theory of backward stochastic differential
equations (BSDEs hereafter for short) on the Sierpinski gasket. As
an application, we derive a representation formula for solutions of
semi-linear parabolic PDEs on the gasket, which will formulated later
in Section \ref{sec:-6}. In contrast to the BSDE theory on Euclidean
spaces, BSDEs on the gasket have two mutually singular drift terms,
which is due to the singularity between the Hausdorff measure as the
volume measure and the Kusuoka measure as the energy measure (see
Section \ref{sec:-1} for more details). Singularity of different
drift terms introduces significant difficulties in the study of BSDEs
on the gasket, one of which is the exponential integrability of a
quadratic process (cf. Theorem \ref{thm:-1-1} below). Our approach
to the exponential integrability is based on moment estimates for
the quadratic processes by expressing the moments as iterated integrals
and by using the heat kernel estimate. The study of BSDEs on the gasket
here is a specific case and a continuation of the papers \citep{KR13,KR16}
investigating BSDEs associated with Dirichlet forms and their application
to partial differential equations. (See Section \ref{sec:-6} for
more related works and connections between BSDEs studied in the current
paper and those in literature.) Though results in this paper are stated
and proved specifically for $2$-dimensional Sierpinski gasket, we
however believe that our results also hold for higher-dimensional
Sierpinski gaskets, and the proofs given in this paper should be easily
adapted to higher-dimensional cases.

The paper is organized as follows. In Section \ref{sec:-1}, we introduce
notations used in the paper and recall several results that will be
needed in following sections. The main results are formulated in Section
\ref{sec:-6}. Section \ref{sec:-2} is devoted to several results
on Brownian motion which will be needed in later sections. In particular,
a representation theorem for square-integrable martingales is given
as an immediate consequence of the results in \citep{Ku89}, \citep{Hin08}
and \citep{QY12}. A result on the exponential integrability for quadratic
processes is also given in this section, which is of interests in
itself. In Section \ref{sec:-3}, we prove the existence and uniqueness
of solutions to BSDEs with deterministic or stochastic durations.
In the last section, Section \ref{sec:-4}, we give the proof of a
Feynman-Kac representation of solutions of semi-linear parabolic PDEs,
and as a result, derive the uniqueness of weak solutions.

\section{\label{sec:-1}Notations and several related results}

In this section, we introduce several notations and notions which
will be in force throughout this paper.

\subsection{\label{subsec:}Diffusions and Dirichlet forms on the gasket}

Let $\mathrm{V}_{0}=\{p_{1},p_{2},p_{3}\}\subseteq\mathbb{R}^{2}$
with $p_{1}=(0,0),\;p_{2}=(1,0),\;p_{3}=(\tfrac{1}{2},\tfrac{\sqrt{3}}{2})$,
and $F_{i}:\mathbb{R}^{2}\to\mathbb{R}^{2},\;i=1,2,3$ be the contraction
mappings given by $F_{i}(x)=\tfrac{1}{2}(x+p_{i}),\;x\in\mathbb{R}^{2},\;i=1,2,3.$
Let $\mathrm{W}_{\ast}=\big\{\omega=\omega_{1}\omega_{2}\omega_{3}\dots\,:\omega_{i}\in\{1,2,3\},\;i\in\mathbb{N}_{+}\big\}$.
For $\omega=\omega_{1}\omega_{2}\omega_{3}\dots\,\in\mathrm{W}_{\ast}$
and $m\in\mathbb{N}_{+}$, we denote $[\omega]_{m}=\omega_{1}\cdots\omega_{m}$
and $F_{[\omega]_{m}}=F_{\omega_{1}}\circ F_{\omega_{2}}\circ\cdots\circ F_{\omega_{m}}$.
Let $\mathrm{V}_{m}=\bigcup_{\omega\in\mathrm{W}_{\ast}}F_{[\omega]_{m}}(\mathrm{V}_{0}),\;m\in\mathbb{N}_{+}$,
and $\mathrm{V}_{\ast}=\bigcup_{m=0}^{\infty}\mathrm{V}_{m}$. The
($2$-dimensional) \emph{Sierpinski gasket} is defined to be the closure
$\mathbb{S}=\mathrm{cl}(\mathrm{V}_{\ast})$. 

The Dirichlet form on $\mathbb{S}$ can be introduced via a finite
difference scheme described below. For any functions $u,v$ on $\mathrm{V}_{\ast}$,
define
\[
\mathcal{E}^{(m)}(u,v)=\sum_{x,y\in\mathrm{V}_{m},|x-y|=2^{-m}}\;\frac{1}{2}\,\Big(\frac{5}{3}\Big)^{m}\,[u(x)-u(y)][v(x)-v(y)],\;\;m\in\mathbb{N}.
\]
For each function $u$ on $\mathrm{V}_{\ast}$, $\mathcal{E}^{(m)}(u,u)$
is non-decreasing in $m$, and $\mathcal{E}(u,u)=\lim_{m\to\infty}\mathcal{E}^{(m)}(u,u)$
exists (possibly infinite). Let $\mathcal{F}(\mathbb{S})=\{u:\mathcal{E}(u,u)<\infty\}$.
Every $u\in\mathcal{F}(\mathbb{S})$ is continuous on $\mathrm{V}_{\ast}$
and therefore can be extended to a continuous function on $\mathbb{S}$
(\citep[Section 2.2 and Section 3.1]{Ki01}). In other words, $\mathcal{F}(\mathbb{S})\subseteq C(\mathbb{S})$,
which in fact follows easily from the following
\begin{equation}
\mathop{\mathrm{osc}}_{\mathbb{S}}(u)\le C_{\ast}\sqrt{\mathcal{E}(u,u)},\quad u\in\mathcal{F}(\mathbb{S}),\label{eq:-40}
\end{equation}
where $C_{\ast}>0$ is a universal constant. (See \citep[Lemma 2.3.9 and Theorem 3.3.4]{Ki01}.)
Since $\mathcal{F}(\mathbb{S})\subseteq C(\mathbb{S})$, the empty
set $\emptyset$ is the only subset of $\mathbb{S}$ with zero capacity.
As a consequence of the definition of $\mathcal{E}^{(m)}$, it is
seen that $\mathcal{E}^{(m+1)}(u,v)=\sum_{i=1,2,3}\frac{5}{3}\mathcal{E}^{(m)}(u\circ F_{i},v\circ F_{i}),\;m\in\mathbb{N}$,
which implies the following self-similar property of $\mathcal{E}$:
\begin{equation}
\mathcal{E}(u,v)=\sum_{i=1,2,3}\frac{5}{3}\,\mathcal{E}(u\circ F_{i},v\circ F_{i}),\;\;u,v\in\mathcal{F}(\mathbb{S}).\label{eq:-30}
\end{equation}

For any function $u$ on $\mathrm{V}_{0}$, there exists a unique
$h\in\mathcal{F}(\mathbb{S})$ such that $h|_{\mathrm{V}_{0}}=u$
and 
\[
\mathcal{E}(h,h)=\min\{\mathcal{E}(v,v):v\in\mathcal{F}(\mathbb{S})\;\text{and}\ v|_{\mathrm{V}_{0}}=u\}.
\]
 (See \citep[Corollary 3.2.15]{Ki01}.) The above function $h$ is
called the \emph{harmonic function }in $\mathbb{S}$ with boundary
value $u$, and denoted by $h=Hu$. For the harmonic function $Hu$
with boundary value $u$, we have
\begin{equation}
\mathcal{E}(Hu,Hu)=\mathcal{E}^{(0)}(u,u)=\frac{3}{2}\,u^{t}\,\mathbf{P}u,\label{eq:-32}
\end{equation}
where
\begin{equation}
\mathbf{P}=\frac{1}{3}\left[\begin{array}{ccc}
2 & -1 & -1\\
-1 & 2 & -1\\
-1 & -1 & 2
\end{array}\right].\label{eq:-35}
\end{equation}
The values of $Hu$ on $\mathrm{V}_{\ast}$ is given by
\[
(Hu)\circ F_{[\omega]_{m}}=\mathbf{A}_{[\omega]_{m}}u\;\;\text{for all}\ \omega\in\mathrm{W}_{\ast},m\in\mathbb{N},
\]
where 
\begin{equation}
\mathbf{A}_{[\omega]_{m}}=\mathbf{A}_{\omega_{m}}\mathbf{A}_{\omega_{m-1}}\cdots\mathbf{A}_{\omega_{1}}\label{eq:-36}
\end{equation}
with $\mathbf{A}_{i},\,i=1,2,3$ being the linear operators having
matrix representations
\begin{equation}
\mathbf{A}_{1}=\frac{1}{5}\left[\begin{array}{ccc}
5 & 0 & 0\\
2 & 2 & 1\\
2 & 1 & 2
\end{array}\right],\;\mathbf{A}_{2}=\frac{1}{5}\left[\begin{array}{ccc}
2 & 2 & 1\\
0 & 5 & 0\\
1 & 2 & 2
\end{array}\right],\;\mathbf{A}_{3}=\frac{1}{5}\left[\begin{array}{ccc}
2 & 1 & 2\\
1 & 2 & 2\\
0 & 0 & 5
\end{array}\right].\label{eq:-33}
\end{equation}
Notice that, in contrast to the definition of $F_{[\omega]}$, the
order of matrices $\mathbf{A}_{\omega_{i}}$ on the right hand side
of (\ref{eq:-36}) is reversed.\medskip{}

Let $\mu$ be the (normalized) \emph{Hausdorff measure} on $\mathbb{S}$;
that is, $\mu$ is the unique Borel probability measure on $\mathbb{S}$
such that $\mu(F_{[\omega]_{m}}(\mathbb{S}))=3^{-m}$ for all $\omega\in\mathrm{W}_{\ast},\,m\in\mathbb{N}$.
Then $(\mathcal{E},\mathcal{F}(\mathbb{S}))$ is a regular Dirichlet
form on $L^{2}(\mathbb{S};\mu)$, called the \emph{standard Dirichlet
form} on $\mathbb{S}$, of which the associated non-positive self-adjoint
operator will be denoted by $\mathcal{L}$. According to the theory
of Dirichlet forms and Markov processes (see \citep[Chapter 7]{FOT10}),
there exists a standard Hunt process $(\Omega,\mathcal{F},\{X_{t}\},\{\theta_{t}\},\{\mathbb{P}_{x}\}_{x\in\mathbb{S}\cup\{\Delta\}})$
with state space $\mathbb{S}$, where $\Delta$ is an isolated point
adjoined to $\mathbb{S}$, and $\theta_{t}:\Omega\to\Omega,\;t\ge0$
are the shift operators. The Hunt process $\{X_{t}\}$ is a diffusion
on $\mathbb{S}$, called (the reflected) \emph{Brownian motion }on
$\mathbb{S}$, of which the associated Markov semigroup will be denoted
by $\{P_{t}\}$. Let $\mathcal{P}(\mathbb{S})$ be the family of all
Borel probability measures on $\mathbb{S}$. For each $\lambda\in\mathcal{P}(\mathbb{S})$,
the probability measure $\mathbb{P}_{\lambda}$ on $\Omega$ is defined
by 
\[
\mathbb{P}_{\lambda}(E)=\int_{\mathbb{S}}\mathbb{P}_{x}(E)\lambda(dx),\;E\in\mathcal{F}.
\]
The expectation with respect to $\mathbb{P}_{\lambda}$ will be denoted
by $\mathbb{E}_{\lambda}$, and we denote by $\{\mathcal{F}_{t}\}$
the \emph{minimal admissible filtration }determined by $\{X_{t}\}$;
that is, $\mathcal{F}_{t}=\bigcap_{\lambda\in\mathcal{P}(\mathbb{S})}\mathcal{F}_{t}^{\lambda},\;t\ge0$,
where $\mathcal{F}_{t}^{\lambda}$ is the $\mathbb{P}_{\lambda}$-completion
of $\sigma\left(X_{r}:r\le t\right)$ in $\mathcal{F}$. An $\{\mathcal{F}_{t}\}$-adapted
process $A_{t}$ is called an \emph{additive functional} if $A_{t+s}=A_{s}+A_{t}\circ\theta_{s}$
for all $t,s\ge0$.\medskip{}

Let $\mathcal{F}(\mathbb{S}\backslash\mathrm{V}_{0})=\{u\in\mathcal{F}(\mathbb{S}):u|_{\mathrm{V}_{0}}=0\}$.
The restriction of $\mathcal{E}$ on $\mathcal{F}(\mathbb{S}\backslash\mathrm{V}_{0})$
is also a Dirichlet form with $\mathcal{F}(\mathbb{S}\backslash\mathrm{V}_{0})$
as its Dirichlet space. Dirichlet spaces $\mathcal{F}(\mathbb{S})$
and $\mathcal{F}(\mathbb{S}\backslash\mathrm{V}_{0})$ correspond
to the Neumann boundary conditions and the Dirichlet boundary conditions
respectively. (See \citep[Theorem 3.7.9]{Ki01}.) Let $\sigma_{\mathrm{V}_{0}}$
be the hitting time $\sigma_{\mathrm{V}_{0}}=\inf\left\{ t>0:X_{t}\in\mathrm{V}_{0}\right\} $.
We define the \emph{killed Brownian motion} $\{X_{t}^{0}\}$ by killing
$\{X_{t}\}$ on hitting\emph{ }$\mathrm{V}_{0}$; that is, $X_{t}^{0}=X_{t}$
if $t<\sigma_{\mathrm{V}_{0}}$, and $X_{t}^{0}=\Delta$ if $t\ge\sigma_{\mathrm{V}_{0}}$.
Then $\{X_{t}^{0}\}$ is a $\mu$-symmetric Hunt process on $\mathbb{S}\backslash\mathrm{V}_{0}$
with $\big(\mathcal{E},\mathcal{F}(\mathbb{S}\backslash\mathrm{V}_{0})\big)$
as its associated Dirichlet form, and the associated semigroup, denoted
by $\{P_{t}^{0}\}$, is given by $P_{t}^{0}(x,E)=\mathbb{P}_{x}\big(X_{t}\in E,\;t<\sigma_{\mathrm{V}_{0}}\big)$
for all $x\in\mathbb{S}\backslash\mathrm{V}_{0},\;E\in\mathcal{B}(\mathbb{S}\backslash\mathrm{V}_{0})$.
It can be easily shown that if $A_{t}$ is an additive functional,
then $A_{t}^{\sigma_{\mathrm{V}_{0}}}=A_{t\wedge\sigma_{\mathrm{V}_{0}}}$
is an additive functional with the shift operators $\theta_{t}$ replaced
by $\theta_{t\wedge\sigma_{\mathrm{V}_{0}}}$.

\subsection{\label{subsec:-3}Kusuoka measure and Brownian martingale}

Let $\mathbf{P}$ and $\mathbf{A}_{i},\,i=1,2,3$ be the matrices
in (\ref{eq:-35}) and (\ref{eq:-33}) respectively, and let $\mathbf{Y}_{i}=\mathbf{P}\mathbf{A}_{i}\mathbf{P},\;i=1,2,3$.
\begin{defn}
The \emph{Kusuoka measure} $\nu$ is defined to be the unique probability
measure $\nu$ on $\mathbb{S}$ such that
\[
\nu\big(F_{[\omega]_{m}}(\mathbb{S})\big)=\frac{1}{2}\,\left(\frac{5}{3}\right)^{m}\;\mathrm{tr}\big(\mathbf{Y}_{[\omega]_{m}}^{t}\mathbf{Y}_{[\omega]_{m}}\big)\;\;\text{for all}\;\omega\in\mathrm{W}_{\ast},m\in\mathbb{N},
\]
where, similar to (\ref{eq:-36}), $\mathbf{Y}_{[\omega]_{m}}=\mathbf{Y}_{\omega_{m}}\mathbf{Y}_{\omega_{m-1}}\cdots\mathbf{Y}_{\omega_{1}}$.
\end{defn}
\begin{rem}
\label{rem:-6}The Kusuoka measure $\nu$ is singular to the Hausdorff
measure $\mu$. (See \citep[Corollary (2.15) and Example 1, p. 678]{Ku89}.)
\end{rem}
\begin{defn}
\label{def:-5}For each $u\in\mathcal{F}(\mathbb{S})$, the \emph{energy
measure} $\nu_{\langle u\rangle}$ of $u$ is defined to be the unique
Borel measure on $\mathbb{S}$ such that
\[
\int_{\mathbb{S}}\phi\,d\nu_{\langle u\rangle}=\mathcal{E}(\phi u,u)-2\mathcal{E}(\phi,u^{2})\;\;\text{for all}\;\phi\in\mathcal{F}(\mathbb{S}).
\]
For any $u,v\in\mathcal{F}(\mathbb{S})$, the \emph{mutual energy
measure $\nu_{\langle u,v\rangle}$} of $u$ and $v$ is defined by
polarization $\nu_{\langle u,v\rangle}=\frac{1}{4}\big(\nu_{\langle u+v\rangle}-\nu_{\langle u-v\rangle}\big)$.
\end{defn}
\begin{rem}
\label{rem:-7}The measure $\nu$ is an energy dominant; that is,
$\nu_{\langle u\rangle}\ll\nu$ for all $u\in\mathcal{F}(\mathbb{S})$.
(See \citep[Lemma (5.1)]{Ku89}.)
\end{rem}
\begin{defn}
For any positive additive functional $A_{t}$, the \emph{Revuz measure}
$\nu_{A}$ of $A$ is defined to be the unique Radon measure on $\mathbb{S}$
such that 
\[
\int_{\mathbb{S}}\phi\,d\nu_{A}=\lim_{t\to0}\frac{1}{t}\,\mathbb{E}_{\mu}\Big(\int_{0}^{t}\phi(X_{r})\,dA_{r}\Big)\;\;\text{for all}\ \phi\in\mathcal{B}_{+}(\mathbb{S}).
\]
\end{defn}
According to stochastic analysis for Brownian motions on $\mathbb{R}^{d}$,
any square-integrable martingale adapted to the Brownian filtration
can be written as an Itô integral against Brownian motion, which is
the crucial ingredient in solving SDEs backward. On fractal, representations
for martingale additive functionals were first proved by S.~Kusuoka
\citep{Ku89}, and generalized to a large class of self-similar sets
including nested fractals by M.~Hino \citep{Hin08}. The following
representation theorem is the specific case of \citep[Theorem (5.4)]{Ku89}
applied to the Sierpinski gasket.
\begin{thm}
\label{thm:-}There exists a martingale additive functional $W_{t}$
satisfying the following:

(i) $W_{t}$ has $\nu$ as its energy measure; that is, $\nu_{\langle W\rangle}=\nu$;

(ii) for any $u\in\mathcal{F}(\mathbb{S})$, there exists a unique
$\zeta\in L^{2}(\mathbb{S};\nu)$ such that 
\begin{equation}
M_{t}^{[u]}=\int_{0}^{t}\zeta(X_{r})dW_{r}\;\;\text{for all}\ t\ge0,\label{eq:-37}
\end{equation}
where $M^{[u]}$ is the martingale part of $u(X_{t})-u(X_{0})$.
\end{thm}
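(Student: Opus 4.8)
The statement is a specialization of Kusuoka's representation theorem, and the plan is to reconstruct its two essential ingredients: the Fukushima decomposition of functions in $\mathcal{F}(\mathbb{S})$, and the one-dimensionality of the energy-measure structure on $\mathbb{S}$. First I would note that, because $\mathcal{F}(\mathbb{S})\subseteq C(\mathbb{S})$ and $\emptyset$ is the only set of zero capacity, every $u\in\mathcal{F}(\mathbb{S})$ admits a strict Fukushima decomposition
\[
u(X_{t})-u(X_{0})=M_{t}^{[u]}+N_{t}^{[u]},
\]
in which $M^{[u]}$ is a martingale additive functional of finite energy and $N^{[u]}$ is a continuous additive functional of zero energy. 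By the standard correspondence between finite-energy martingale additive functionals and energy measures, the sharp bracket $\langle M^{[u]}\rangle$ is the positive continuous additive functional whose Revuz measure is $\nu_{\langle u\rangle}$, and more generally $\langle M^{[u]},M^{[v]}\rangle$ has Revuz measure $\nu_{\langle u,v\rangle}$. Endowing the space $\mathcal{M}$ of finite-energy martingale additive functionals with the inner product $(M^{[u]},M^{[v]})\mapsto\nu_{\langle u,v\rangle}(\mathbb{S})$ makes it a Hilbert space into which $u\mapsto M^{[u]}$ maps isometrically modulo constants.

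The heart of the argument is to show that this collection of energy measures is governed by a single scalar direction field, i.e. that the martingale dimension equals one. Here I would exploit the matrix cocycle $\mathbf{Y}_{[\omega]_{m}}=\mathbf{Y}_{\omega_{m}}\cdots\mathbf{Y}_{\omega_{1}}$ built from $\mathbf{Y}_{i}=\mathbf{P}\mathbf{A}_{i}\mathbf{P}$. Since $\nu$ is energy dominant (Remark \ref{rem:-7}), each $u$ has a density $\frac{d\nu_{\langle u\rangle}}{d\nu}\in L^{1}(\mathbb{S};\nu)$, and the trace normalization defining $\nu$ together with the self-similarity (\ref{eq:-30}) lets one compute these densities through the renormalized products $\mathbf{Y}_{[\omega]_{m}}^{t}\mathbf{Y}_{[\omega]_{m}}/\mathrm{tr}(\mathbf{Y}_{[\omega]_{m}}^{t}\mathbf{Y}_{[\omega]_{m}})$. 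The key analytic fact is that these normalized matrices converge, for $\nu$-almost every $x$, to a rank-one projection $\pi(x)$; consequently there is a single direction along which all energy measures differentiate, so that $\frac{d\nu_{\langle u,v\rangle}}{d\nu}(x)=g_{u}(x)\,g_{v}(x)$ for measurable densities $g_{u},g_{v}$ determined up to a common global sign.

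Given this one-dimensionality, I would fix a martingale additive functional $W$ with $\nu_{\langle W\rangle}=\nu$ — for instance the coordinate martingale aligned with the rank-one direction $\pi$, normalized so that its energy measure is exactly $\nu$ — which gives (i). For (ii), set $\zeta=\frac{d\nu_{\langle W,u\rangle}}{d\nu}=g_{W}^{-1}g_{u}$, $\nu$-a.e.; then $\int_{0}^{t}\zeta(X_{r})\,dW_{r}$ is a martingale additive functional whose mutual energy measure with every $M^{[v]}$ agrees with that of $M^{[u]}$, by the rank-one identity. Hence $M^{[u]}-\int_{0}^{\cdot}\zeta(X_{r})\,dW_{r}$ has vanishing energy measure and is therefore the zero martingale additive functional, yielding (\ref{eq:-37}). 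Uniqueness of $\zeta$ in $L^{2}(\mathbb{S};\nu)$ is then immediate, since the quadratic variation of $\int_{0}^{\cdot}\zeta\,dW$ has Revuz measure $\zeta^{2}\,\nu$, forcing two representing densities to coincide $\nu$-a.e., and $\zeta\in L^{2}$ because the total energy of $M^{[u]}$ is finite. The main obstacle is the rank-one convergence in the second paragraph: controlling the asymptotics of the products $\mathbf{Y}_{[\omega]_{m}}$ along $\nu$-typical addresses is the delicate multiplicative/ergodic analysis carried out by Kusuoka, and it is precisely the mechanism from which the singularity of $\nu$ against $\mu$ originates.
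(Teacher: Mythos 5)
The paper contains no proof of this theorem: it is imported wholesale, stated as the special case of \citep[Theorem (5.4)]{Ku89} applied to the Sierpinski gasket. Your sketch is a faithful reconstruction of Kusuoka's own argument for that cited result --- the Fukushima decomposition and the Revuz correspondence between finite-energy martingale additive functionals and energy measures, the $\nu$-a.e. rank-one convergence of the normalized products $\mathbf{Y}_{[\omega]_{m}}^{t}\mathbf{Y}_{[\omega]_{m}}/\mathrm{tr}\big(\mathbf{Y}_{[\omega]_{m}}^{t}\mathbf{Y}_{[\omega]_{m}}\big)$ giving martingale dimension one, the factorization of mutual energy densities, and the vanishing-energy argument yielding (\ref{eq:-37}) together with uniqueness of $\zeta$ --- and, like the paper itself, you defer the single genuinely hard step (the rank-one convergence of the matrix cocycle) to Kusuoka's analysis, so both treatments rest on precisely the same foundation.
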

\begin{rem}
\label{rem:-9}Notice that the martingale additive functional $W_{t}$
in Theorem \ref{thm:-} is not unique. In fact, let $g$ be any Borel
measurable function on $\mathbb{S}$ such that $|g|=1$. Then $\int_{0}^{t}g(X_{r})\,dW_{r}$
is also a martingale additive functional satisfying the properties
in Theorem \ref{thm:-}. However, we may have a canonical choice if
the sign of the integrand $\zeta\in L^{2}(\mathbb{S};\nu)$ in (\ref{eq:-37})
is specified. More precisely, we make the following definition.
\end{rem}
\begin{defn}
\label{def:-3}The \emph{Brownian martingale} is defined to be the
unique martingale additive functional $W_{t}$ such that $W_{t}$
satisfies the properties in Theorem \ref{thm:-}, and that, if $h$
is the harmonic function with boundary value $1_{\{p_{1}\}}$ and
$M_{t}^{[h]}=\int_{0}^{t}\zeta(X_{r})\,dW_{r}$, then $\zeta<0\;\;\nu$-a.e.
\end{defn}
\begin{rem}
\label{rem:-8}The sign of $\zeta$ in Definition \ref{def:-3} is
chosen as a convention.
\end{rem}
We shall always denote by $W_{t}$ the Brownian martingale in the
rest of this paper.

\section{\label{sec:-6}Formulation of main results}

Linear BSDEs were first introduced by J.~Bismut to establish the
Pontryagin maximum principle in stochastic control theory (see \citep{Bis78,Yong99}
and etc\emph{.}), while the theory of non-linear BSDEs was developed
in E.~Pardoux and S.~Peng \citep{PP90}. The celebrated Feynman-Kac
formula was also generalized to non-linear cases in \citep{Pen91}
using BSDEs. There have been a large amount of works on the theory
of BSDEs. For example, BSDEs associated with (non-symmetric) second-order
elliptic operators of divergence forms on Euclidean spaces were studied
in \citep{Sto03,BPS05} and applied to study semi-linear parabolic
PDEs involving divergences of measurable vector fields, where an Itô-Fukushima
decomposition for the diffusion process associated to the elliptic
operator was derived in terms of forward-backward martingales, and
a representation formula for solutions of parabolic PDEs was obtained.
BSDEs and semi-linear parabolic equations on Hilbert spaces were investigated
in \citep{Zhu15} using methods from functional analysis and generalized
Dirichlet forms. A martingale representation with countably many representing
martingales for the infinite dimensional case was also proved in \citep{Zhu15}
in order to solve BSDEs on Hilbert spaces. In \citep{KR13,KR16},
existence and uniqueness of solutions to a class of BSDEs associated
with (not necessarily local) regular (or quasi-regular) Dirichlet
forms were established, together with a probabilistic representation
of solutions to semi-linear elliptic equations perturbed by smooth
measures. It was also shown in \citep{KR13} that the probabilistic
solutions yielded by BSDEs coincide with the notion of weak solutions
(called \emph{solutions in the sense of duality} in \citep{KR13})
under a transience assumption on the Dirichlet form.

In the current paper, we consider BSDEs, with deterministic or stochastic
durations, driven by Brownian martingale $W_{t}$. We shall give a
Feynman-Kac representation for solutions of semi-linear parabolic
equations on $\mathbb{S}$, of which the meaning will be formulated
later. On the one hand, the BSDEs considered here can be regarded
as a specific case of those studied in \citep{KR13,KR16} for quite
general (quasi-regular) Dirichlet forms. On the other hand, solutions
to BSDEs in our case can be formulated in a more specific way (e.g.
the martingale parts are given as stochastic integrals, and the exponential
integrability assumption on quadratic processes as drifts can be verified),
which is due to the specific setting of the gasket. Regarding probabilistic
interpretations of parabolic equations in terms of BSDEs, the representation
in Theorem \ref{thm:-3-1} is an analogue of the representations established
in \citep{Pen91,Sto03,BPS05}.

To simplify notations, we shall adopt the convention
\[
u(t)=u(t,\cdot)\;\;\text{and}\;\;g(t,u(t))=g(t,u(t,\cdot),\cdot)
\]
for any function $u:[0,\infty)\times\mathbb{S}\to\mathbb{R}$ and
$g:[0,\infty)\times\mathbb{S}\times\mathbb{R}\to\mathbb{R}$. Let
$T\in(0,\infty)$ and $\lambda\in\mathcal{P}(\mathbb{S})$. Consider
the BSDE on \emph{$(\Omega,\{\mathcal{F}_{t}^{\lambda}\},\mathbb{P}_{\lambda})$}
\begin{equation}
\left\{ \begin{aligned}dY_{t} & =-g(t,Y_{t})dt-f(t,Y_{t},Z_{t})d\langle W\rangle_{t}+Z_{t}dW_{t},\;\;t\in[0,T),\\
Y_{T} & =\xi,
\end{aligned}
\right.\label{eq:-145}
\end{equation}
where $\xi$ is an $\mathcal{F}_{T}^{\lambda}\mbox{-}$measurable
random variable, and the coefficients $g,f$ satisfy the following
measurability condition:\medskip{}

\noindent $\hypertarget{M}{\text{(M)}}$\hspace{1em}$g:[0,\infty)\times\mathbb{R}\times\Omega\to\mathbb{R}$
and $f:[0,\infty)\times\mathbb{R}\times\mathbb{R}\times\Omega\to\mathbb{R}$
are measurable, and $t\mapsto g(t,y)$ and $t\mapsto f(t,y,z)$ are
$\{\mathcal{F}_{t}^{\lambda}\}$-adapted processes for all $y,z\in\mathbb{R}$.\medskip{}

\noindent As in the classical case, the BSDE (\ref{eq:-145}) is interpreted
as the corresponding (backward) integral equation
\begin{equation}
Y_{t}=\xi+\int_{t}^{T}g(r,Y_{r})dr+\int_{t}^{T}f(r,Y_{r},Z_{r})d\langle W\rangle_{r}-\int_{t}^{T}Z_{r}dW_{r},\;\;t\in[0,T]\;\;\mathbb{P}_{\lambda}\mbox{-a.s.},\label{eq:-2}
\end{equation}
provided that all integrals can be well-defined.
\begin{rem}
\label{rem:-11}Let us make some remarks on the different drift terms
in the BSDE (\ref{eq:-145}). The drift $g(t,Y_{t})dt$ corresponds
to Brownian motion $X_{t}$, while the drift $f(t,Y_{t},Z_{t})d\langle W\rangle_{t}$
corresponds to Brownian martingale $W_{t}$. As we shall see later
(Lemma \ref{lem:-33}), the processes $X_{t}$ and $W_{t}$ have singular
``speeds'', which is different from the situations on Euclidean
spaces and reveals the fractal nature of $\mathbb{S}$. We also note
that the process $Z_{t}$ does not appear in the drift $g(t,Y_{t})dt$.
This is because, intuitively, $Z_{t}$ represents the projection of
$Y_{t}$ on the ``$W_{t}$-direction'', which is orthogonal to the
projection of $Y_{t}$ on the ``$X_{t}$-direction''.
\end{rem}
We first introduce the Banach spaces for solutions of BSDEs. As BSDEs
with stochastic durations will also be considered, it is convenient
to define directly the spaces of processes with stochastic running
times.
\begin{defn}
\label{def:-1}Let $\tau$ be an $\{\mathcal{F}_{t}^{\lambda}\}$-stopping
time, and $\beta=(\beta_{0},\beta_{1})\in\mathbb{R}_{+}^{2}$. We
define $\mathcal{V}_{\lambda}^{\beta}[0,\tau]$ to be the Banach space
of all pairs $(y,z)$ of $\{\mathcal{F}_{t}^{\lambda}\}$-adapted
processes such that
\[
\begin{aligned}\left\Vert (y,z)\right\Vert _{\mathcal{V}_{\lambda}^{\beta}[0,\tau]}^{2}\triangleq\mathbb{E}_{\lambda}\Big[\;\sup_{0\le t\le\tau} & \Big(y_{t}^{2}e^{2\beta_{0}t+2\beta_{1}\langle W\rangle_{t}}+\int_{t}^{\tau}y_{r}^{2}e^{2\beta_{0}r+2\beta_{1}\langle W\rangle_{r}}dr\\
 & +\int_{t}^{\tau}\left(y_{r}^{2}+z_{r}^{2}\right)e^{2\beta_{0}r+2\beta_{1}\langle W\rangle_{r}}d\langle W\rangle_{r}\Big)\Big]<\infty.
\end{aligned}
\]
We shall simply write $\mathcal{V}_{x}^{\beta}[0,\tau]$ when $\lambda=\delta_{x}$
is the Dirac measure concentrated at $x\in\mathbb{S}$.
\end{defn}
\begin{defn}
\label{def:}Let $\beta\in\mathbb{R}_{+}^{2}$. We say that the BSDE
(\ref{eq:-145}) admits a \emph{solution $(Y,Z)$ in $\mathcal{V}_{\lambda}^{\beta}[0,T]$}
if $(Y,Z)\in\mathcal{V}_{\lambda}^{\beta}[0,T]$ and satisfies (\ref{eq:-2}).
The solution $(Y,Z)$ is said to be unique in $\mathcal{V}_{\lambda}^{\beta}[0,T]$
if $\|(Y-\bar{Y},Z-\bar{Z})\|_{\mathcal{V}_{\lambda}^{\beta}[0,T]}=0$
for any solution $(\bar{Y},\bar{Z})$ of (\ref{eq:-145}) in $\mathcal{V}_{\lambda}^{\beta}[0,T]$.
\end{defn}
We should point out that the uniqueness of solutions defined in Definition
\ref{def:} is the uniqueness of solutions in the space $\mathcal{V}_{\lambda}^{\beta}[0,T]$.
In fact, uniqueness can also be defined for solutions not necessarily
in $\mathcal{V}_{\lambda}^{\beta}[0,T]$. More precisely, we have
the following definition for uniqueness of solutions. 
\begin{defn}
\label{def:-4}The BSDE (\ref{eq:-145}) is said to \emph{admit at
most one solution}, if
\begin{equation}
Y_{t}=\bar{Y}_{t},\;t\in[0,T],\;\mbox{and}\;\int_{0}^{T}(Z_{r}-\bar{Z}_{r})^{2}d\langle W\rangle_{r}=0,\quad\mathbb{P}_{\lambda}\mbox{-a.s.}\label{eq:-15}
\end{equation}
for any two pairs $(Y,Z)$ and $(\bar{Y},\bar{Z})$ of $\{\mathcal{F}_{t}^{\lambda}\}$-adapted
processes satisfying (\ref{eq:-2}).
\end{defn}
\begin{thm}
\label{thm:-1-1}Let $\beta=(\beta_{0},\beta_{1})\in[1,\infty)^{2}$.
Suppose that
\begin{equation}
\mathbb{E}_{\lambda}\big(\xi^{2}e^{2\beta_{1}\langle W\rangle_{T}}\big)<\infty,\tag{A.1}\label{eq:-6}
\end{equation}
and that, for all $t\in[0,T)$ and all $y,\bar{y},z,\bar{z}\in\mathbb{R}$,
\begin{equation}
|g(t,y,\omega)-g(t,\bar{y},\omega)|\le\frac{K_{0}}{2}|y-\bar{y}|\quad\mathbb{P}_{\lambda}\mbox{-a.s.},\tag{A.2}\label{eq:-188}
\end{equation}
\begin{equation}
\left|f(t,y,z,\omega)-f(t,\bar{y},\bar{z},\omega)\right|\le\frac{K_{0}}{2}\left|y-\bar{y}\right|+K_{1}\,\left|z-\bar{z}\right|\;\;\mathbb{P}_{\lambda}\mbox{-a.s.},\tag{A.3}\label{eq:-189}
\end{equation}
where $K_{0},K_{1}>0$ are some constants.

(a) The BSDE (\ref{eq:-145}) admits at most one solution.

(b) If, in addition,
\begin{equation}
\mathbb{E}_{\lambda}\Big(\int_{0}^{T}g(r,0)^{2}e^{2\beta_{1}\langle W\rangle_{r}}dr\Big)<\infty,\;\mathbb{E}_{\lambda}\Big(\int_{0}^{T}f(r,0,0)^{2}e^{2\beta_{1}\langle W\rangle_{r}}d\langle W\rangle_{r}\Big)<\infty,\tag{A.4}\label{eq:-191}
\end{equation}
for sufficiently large $\beta_{0},\beta_{1}$ ($\beta_{i}>36K_{i}^{2},\,i=0,1$
will suffice), then (\ref{eq:-145}) admits a unique solution $(Y,Z)\in\mathcal{V}_{\lambda}^{\beta}[0,T]$.
Moreover,
\begin{equation}
\begin{aligned}\|(Y,Z)\|_{\mathcal{V}_{\lambda}^{\beta}[0,T]}^{2} & \le C\,\mathbb{E}_{\lambda}\Big(\xi^{2}e^{2\beta_{0}T+2\beta_{1}\langle W\rangle_{T}}+\int_{0}^{T}g(r,0)^{2}e^{2\beta_{0}r+2\beta_{1}\langle W\rangle_{r}}dr\\
 & \quad+\int_{0}^{T}f(r)^{2}e^{2\beta_{0}r+2\beta_{1}\langle W\rangle_{r}}d\langle W\rangle_{r}\Big),
\end{aligned}
\label{eq:---2}
\end{equation}
where $C>0$ is a constant depending only on $K_{0},K_{1},\beta$.
\end{thm}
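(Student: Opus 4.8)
The plan is to derive both parts from a single weighted energy estimate obtained by applying It\^o's formula to $Y_t^2\,\Gamma_t$, where $\Gamma_t=e^{2\beta_0 t+2\beta_1\langle W\rangle_t}$, and then to produce the solution in part (b) by a contraction argument in $\mathcal{V}_\lambda^\beta[0,T]$. Since $\Gamma$ is of finite variation, applying It\^o's formula to a pair $(Y,Z)$ satisfying \eqref{eq:-2} and localising by stopping times $\tau_n\uparrow\infty$ that reduce the $dW$-integral gives, on $[t,\tau]$,
\begin{equation*}
\begin{aligned}
&Y_t^2\Gamma_t+2\beta_0\int_t^\tau Y_r^2\Gamma_r\,dr+\int_t^\tau(2\beta_1 Y_r^2+Z_r^2)\Gamma_r\,d\langle W\rangle_r\\
&\quad=Y_\tau^2\Gamma_\tau+2\int_t^\tau Y_r g_r\Gamma_r\,dr+2\int_t^\tau Y_r f_r\Gamma_r\,d\langle W\rangle_r-2\int_t^\tau Y_r Z_r\Gamma_r\,dW_r,
\end{aligned}
\end{equation*}
with $g_r=g(r,Y_r)$, $f_r=f(r,Y_r,Z_r)$. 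The decisive structural feature, emphasised in Remark \ref{rem:-11}, is that the two drifts act against the mutually singular integrators $dr$ and $d\langle W\rangle_r$, so the coercive quadratic terms split along these two measures and the weights $\beta_0,\beta_1$ must be tuned separately.

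For part (a) the quantity \eqref{eq:-15} is weight-independent, so I am free to run the identity with auxiliary weights $\beta_0,\beta_1$ chosen as large as convenient. Applying it to $\delta Y=Y-\bar Y$, $\delta Z=Z-\bar Z$ for two solutions of \eqref{eq:-2}, the terminal values cancel and the inhomogeneous parts $g(r,0),f(r,0,0)$ drop out, leaving only the Lipschitz differences controlled by \eqref{eq:-188}--\eqref{eq:-189}. Using $2K_1|\delta Y||\delta Z|\le\delta Z^2+K_1^2\delta Y^2$ to absorb the $\delta Z^2$ against the coercive term, the $dr$-part stays nonnegative once $2\beta_0>K_0$ and the $d\langle W\rangle_r$-part once $2\beta_1>K_0+K_1^2$. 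Because the two solutions are not assumed to lie in $\mathcal{V}_\lambda^\beta$, I then take expectations along the localisation to kill the martingale, pass to the limit by monotone convergence, and conclude $\mathbb{E}_\lambda[\delta Y_t^2\Gamma_t]=0$ for every $t$ together with $\int_0^T(\delta Z)^2d\langle W\rangle=0$, which is exactly \eqref{eq:-15}.

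For part (b) I set up a fixed-point map $\Phi$ on $\mathcal{V}_\lambda^\beta[0,T]$. Given $(y,z)$, put $G_t=\int_0^t g(r,y_r)\,dr+\int_0^t f(r,y_r,z_r)\,d\langle W\rangle_r$; assumptions \eqref{eq:-6} and \eqref{eq:-191}, the Lipschitz bounds, and the exponential integrability of $\langle W\rangle_T$ from Section \ref{sec:-2} ensure $\xi+G_T\in L^2(\mathbb{P}_\lambda)$. The square-integrable martingale $M_t=\mathbb{E}_\lambda[\xi+G_T\mid\mathcal{F}_t^\lambda]$ is represented as $M_t=M_0+\int_0^t Z_r\,dW_r$ by the $L^2$-martingale representation theorem (the version extending Theorem \ref{thm:-}), and I define $\Phi(y,z)=(Y,Z)$ with $Y_t=M_t-G_t$; one checks that $(Y,Z)$ solves \eqref{eq:-2} with coefficients frozen at $(y,z)$ and has terminal value $\xi$. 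The energy identity, now carrying the full inhomogeneous terms and handled with Young's inequality plus a Burkholder--Davis--Gundy estimate to recover the $\sup_t$-term of the $\mathcal{V}_\lambda^\beta$-norm, shows that $\Phi$ maps $\mathcal{V}_\lambda^\beta[0,T]$ into itself and yields the a priori bound \eqref{eq:---2}; comparing $\Phi(y,z)$ with $\Phi(\bar y,\bar z)$ and estimating the input differences with the same inequalities gives $\|\Phi(y,z)-\Phi(\bar y,\bar z)\|^2\le\kappa\,\|(y,z)-(\bar y,\bar z)\|^2$ with $\kappa<1$ precisely when $\beta_i>36K_i^2$. Banach's fixed-point theorem then provides the unique solution in $\mathcal{V}_\lambda^\beta[0,T]$.

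The hard part will be twofold. First, the argument rests on having a martingale representation for \emph{all} square-integrable $\{\mathcal{F}_t^\lambda\}$-martingales against $W_t$, not merely for the martingale parts $M^{[u]}$ appearing in Theorem \ref{thm:-}; establishing this completeness (density of $\{M^{[u]}:u\in\mathcal{F}(\mathbb{S})\}$ and closedness of the space of stochastic integrals against $W$) underpins the whole construction of $\Phi$. Second, the genuinely fractal difficulty is the mutual singularity of the two integrators $dr$ and $d\langle W\rangle_r$: every estimate must keep the two drift contributions separate, which is the reason for the two-parameter weight and for the split threshold $\beta_0>36K_0^2$, $\beta_1>36K_1^2$. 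Recovering the $\sup_t$-control in the $\mathcal{V}_\lambda^\beta$-norm via Burkholder--Davis--Gundy while respecting this splitting, and verifying the $L^2$-integrability of $\xi+G_T$ in the presence of the exponential weights, is where the most careful bookkeeping is needed.
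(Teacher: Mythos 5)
Your proposal is correct and follows essentially the same route as the paper: the weighted It\^o energy estimate applied to $Y_t^2e^{2\beta_0 t+2\beta_1\langle W\rangle_t}$ for uniqueness, and for existence the construction of the frozen-coefficient solution via conditional expectation plus martingale representation followed by a contraction (the paper phrases it as Picard iteration through Lemma \ref{lem:-2} and Lemma \ref{thm:-4}, with the same threshold $\beta_i>36K_i^2$). The representation theorem whose availability you flag as the "hard part" is precisely Theorem \ref{thm:-2}, which the paper has already established from \citep{QY12} and Theorem \ref{thm:-}, so no additional completeness argument is needed.
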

\begin{rem}
\label{rem:}As we shall see in Theorem \ref{thm:-3-1} below, (Dirichlet)
terminal-boundary value problems for semi-linear parabolic PDEs correspond
to BSDEs with bounded stochastic durations. Results of Theorem \ref{thm:-1-1}
can be extended to such case with no essential difficulties. More
precisely, let $\tau$ be a bounded $\{\mathcal{F}_{t}^{\lambda}\}$-stopping
time and $\xi$ be an $\mathcal{F}_{\tau}^{\lambda}\mbox{-}$adaptive
random variable. The definition of solutions of the BSDE on \emph{$(\Omega,\{\mathcal{F}_{t}^{\lambda}\},\mathbb{P}_{\lambda})$}
\begin{equation}
\left\{ \begin{aligned}dY_{t} & =-g(t,Y_{t})dt-f(t,Y_{t},Z_{t})d\langle W\rangle_{t}+Z_{t}dW_{t},\;\;t\in[0,\tau),\\
Y_{\tau} & =\xi,
\end{aligned}
\right.\label{eq:-146}
\end{equation}
can be given by replacing $T$ by $\tau$ in Definition \ref{def:},
and the conclusions of Theorem \ref{thm:-1-1} also hold with $T$
replaced by $\tau$.\medskip{}
\end{rem}
We may also consider the BSDE (\ref{eq:-146}) with stochastic duration
$\tau$ which is not necessarily bounded, where $g,f$ satisfy the
measurability condition $\hyperlink{M}{\text{(M)}}$.
\begin{defn}
\label{def:-2}Let $\beta=(\beta_{0},\beta_{1})\in\mathbb{R}_{+}^{2}$.
We say that $(Y,Z)$ is a \emph{solution of }(\ref{eq:-146}) \emph{in
}$\mathcal{V}_{\lambda}^{\beta}[0,\tau]$ if $(Y,Z)\in\mathcal{V}_{\lambda}^{\beta}[0,\tau]$
and satisfies the following:
\begin{equation}
\begin{aligned}Y_{t\wedge\tau}=Y_{T\wedge\tau} & +\int_{t\wedge\tau}^{T\wedge\tau}g(r,Y_{r})dr+\int_{t\wedge\tau}^{T\wedge\tau}f(r,Y_{r},Z_{r})d\langle W\rangle_{r}\\
 & -\int_{t\wedge\tau}^{T\wedge\tau}Z_{r}dW_{r}\quad\mbox{for all}\;\;0\le t\le T<\infty\;\;\mathbb{P}_{\lambda}\mbox{-a.s.},
\end{aligned}
\label{eq:-147}
\end{equation}
and
\begin{equation}
\lim_{T\to\infty}\mathbb{E}_{\lambda}\Big(|Y_{T\wedge\tau}-\xi|^{2}e^{2\beta_{0}(T\wedge\tau)+2\beta_{1}\langle W\rangle_{T\wedge\tau}}\Big)=0.\label{eq:-151}
\end{equation}

The\emph{ }solution $(Y,Z)$ is said to be unique in $\mathcal{V}_{\lambda}^{\beta}[0,\tau]$
if $\|(Y-\bar{Y},Z-\bar{Z})\|_{\mathcal{V}_{\lambda}^{\beta}[0,\tau]}=0$
for any solution $(\bar{Y},\bar{Z})$ of (\ref{eq:-146}) in $\mathcal{V}_{\lambda}^{\beta}[0,\tau]$.
\end{defn}
\begin{rem}
\label{rem:-1}Clearly, if $\tau$ is bounded, then Definition \ref{def:-2}
coincides with Definition \ref{def:} with $T$ replaced by $\tau$.
(See Remark \ref{rem:}.)
\end{rem}
Similar to Definition \ref{def:}, we may discuss uniqueness of solutions
not necessarily in the space $\mathcal{V}_{\lambda}^{\beta}[0,\tau]$.
\begin{defn}
The BSDE (\ref{eq:-146}) is said to \emph{admit at most one solution},
if (\ref{eq:-15}) holds with $T$ replaced by $\tau$ for any two
pairs $(Y,Z)$ and $(\bar{Y},\bar{Z})$ of $\{\mathcal{F}_{t}^{\lambda}\}$-adapted
processes satisfying (\ref{eq:-147}) and (\ref{eq:-151}).
\end{defn}
\begin{thm}
\label{thm:-2-1}Let $\beta=(\beta_{0},\beta_{1})\in[1,\infty)^{2}$.
Suppose that (\ref{eq:-6})\textendash (\ref{eq:-189}) hold with
$T$ replaced by $\tau$:

\begin{equation}
\mathbb{E}_{\lambda}\big(\xi^{2}e^{2\beta_{0}\tau+2\beta_{1}\langle W\rangle_{\tau}}\big)<\infty,\tag{A'.1}\label{eq:-196}
\end{equation}
\begin{equation}
|g(t,y,\omega)-g(t,\bar{y},\omega)|\le\frac{K_{0}}{2}|y-\bar{y}|\quad\mathbb{P}_{\lambda}\mbox{-a.s.},\tag{A'.2}\label{eq:-173}
\end{equation}
\begin{equation}
|f(t,y,z,\omega)-f(t,\bar{y},\bar{z},\omega)|\le\frac{K_{0}}{2}|y-\bar{y}|+K_{1}|z-\bar{z}|\;\;\mathbb{P}_{\lambda}\mbox{-a.s.},\tag{A'.3}\label{eq:-174}
\end{equation}

(a) The BSDE (\ref{eq:-146}) admits at most one solution.

(b) If, in addition to (\ref{eq:-196})\textendash (\ref{eq:-174}),
\begin{equation}
(y-\bar{y})\big(g(t,y,\omega)-g(t,\bar{y},\omega)\big)\le-\kappa_{0}|y-\bar{y}|^{2},\tag{A'.4}\label{eq:-187}
\end{equation}
\begin{equation}
(y-\bar{y})\big(f(t,y,z,\omega)-f(t,\bar{y},z,\omega)\big)\le-\kappa_{1}|y-\bar{y}|^{2},\tag{A'.5}\label{eq:-172}
\end{equation}
for all $t\in[0,\tau(\omega))\;y,\bar{y},z,\bar{z}\in\mathbb{R}$,
and $\mathbb{P}_{\lambda}\mbox{-a.e.}\;\omega\in\Omega$, and
\begin{equation}
\mathbb{E}_{\lambda}\Big(\int_{0}^{\tau}g(t,0)^{2}e^{2\beta_{0}t+2\beta_{1}\langle W\rangle_{t}}dt\Big)<\infty,\;\mathbb{E}_{\lambda}\Big(\int_{0}^{\tau}f(t,0,0)^{2}e^{2\beta_{0}t+2\beta_{1}\langle W\rangle_{t}}d\langle W\rangle_{t}\Big)<\infty,\tag{A'.6}\label{eq:-183}
\end{equation}
where $\kappa_{0},\kappa_{1}\in\mathbb{R}$ are some constants satisfying
\begin{equation}
\beta_{0}-\kappa_{0}>0,\;\beta_{1}-\kappa_{1}+\frac{K_{1}^{2}}{2}>0.\tag{A'.7}\label{eq:-193}
\end{equation}
 Then BSDE (\ref{eq:-146}) admits a unique solution $(Y,Z)\in\mathcal{V}_{\lambda}^{\beta}[0,\tau]$,
and
\begin{equation}
\begin{aligned}\|(Y,Z)\|_{\mathcal{V}_{\lambda}^{\beta}[0,\tau]}^{2} & \le C\,\mathbb{E}_{\lambda}\Big(\xi^{2}e^{2\beta_{0}\tau+2\beta_{1}\langle W\rangle_{\tau}}+\int_{0}^{\tau}g(t,0)^{2}e^{2\beta_{0}t+2\beta_{1}\langle W\rangle_{t}}dt\\
 & \quad+\int_{0}^{\tau}f(t,0,0)^{2}e^{2\beta_{0}t+2\beta_{1}\langle W\rangle_{t}}d\langle W\rangle_{t}\Big),
\end{aligned}
\label{eq:-194}
\end{equation}
where $C>0$ is a constant depending only on $\beta_{0},\beta_{1},\kappa_{0},\kappa_{1},K_{0},K_{1}$.
\end{thm}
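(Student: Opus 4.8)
The engine of the proof is the weighted It\^o calculus for $Y_t^2\Phi_t$, where $\Phi_t=e^{2\beta_0 t+2\beta_1\langle W\rangle_t}$, exactly as in Theorem \ref{thm:-1-1} but now carried to an unbounded stochastic horizon. Since $\Phi$ has finite variation, a solution of \eqref{eq:-147} satisfies
\begin{equation*}
d(Y_t^2\Phi_t)=\Phi_t(2\beta_0 Y_t^2-2Y_tg)\,dt+\Phi_t(2\beta_1 Y_t^2-2Y_tf+Z_t^2)\,d\langle W\rangle_t+2\Phi_t Y_tZ_t\,dW_t,
\end{equation*}
with $g=g(t,Y_t)$ and $f=f(t,Y_t,Z_t)$. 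The decisive feature is that $Z_t^2\,d\langle W\rangle_t$ enters with the sign that, after integrating backward and isolating the value at the lower limit, produces the energy $\int\Phi Z^2\,d\langle W\rangle$ controlling the $z$-component of the $\mathcal{V}_\lambda^\beta$-norm. Two facts from the preparatory sections are used repeatedly: the exponential integrability of $\langle W\rangle$ (Section \ref{sec:-2}), which ensures that $\int\Phi YZ\,dW$ is a genuine martingale after localisation and that Burkholder--Davis--Gundy may be applied, and the mutual singularity of the speeds of $X$ and $W$ (Lemma \ref{lem:-33}), which forces every Gronwall step to be taken against the combined increasing clock $t+\langle W\rangle_t$ rather than either clock separately.

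For part (a) I would take two pairs $(Y,Z),(\bar Y,\bar Z)$ satisfying \eqref{eq:-147}--\eqref{eq:-151}, write $\delta Y=Y-\bar Y$, $\delta Z=Z-\bar Z$, and apply the identity above to $(\delta Y)^2\Phi$ on $[t\wedge\tau,T\wedge\tau]$, localising to remove the stochastic integral in expectation. The Lipschitz bounds \eqref{eq:-173}--\eqref{eq:-174} and Young's inequality estimate the drift contributions, the $K_1$-term being absorbed into the favourable energy $\tfrac12\int\Phi(\delta Z)^2\,d\langle W\rangle$, and leave a backward inequality controlling $\mathbb{E}_\lambda[(\delta Y_{t\wedge\tau})^2\Phi_{t\wedge\tau}]$ by the terminal contribution $\mathbb{E}_\lambda[(\delta Y_{T\wedge\tau})^2\Phi_{T\wedge\tau}]$ plus an integral against $t+\langle W\rangle_t$. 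The terminal condition \eqref{eq:-151}, applied to both solutions, drives $\mathbb{E}_\lambda[(\delta Y_{T\wedge\tau})^2\Phi_{T\wedge\tau}]\to0$ as $T\to\infty$; a Gronwall argument with respect to the combined clock then gives $\delta Y\equiv0$ and $\int_0^\tau(\delta Z)^2\,d\langle W\rangle=0$, which is \eqref{eq:-15} with $T$ replaced by $\tau$.

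For part (b) I would first establish the a priori estimate \eqref{eq:-194} for any solution in $\mathcal{V}_\lambda^\beta[0,\tau]$. Splitting $g(t,Y_t)=(g(t,Y_t)-g(t,0))+g(t,0)$ and similarly for $f$, the one-sided monotonicity \eqref{eq:-187}--\eqref{eq:-172} handles the $y$-dependent parts while the $z$-Lipschitz constant $K_1$ is treated by Young's inequality; the threshold \eqref{eq:-193} is precisely what renders the coefficients of $\Phi Y^2\,dt$ and $\Phi Y^2\,d\langle W\rangle$ non-positive, so that the estimate closes \emph{uniformly in the horizon}, with source terms bounded through \eqref{eq:-183}. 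A Burkholder--Davis--Gundy estimate for $\int\Phi YZ\,dW$ upgrades this to a bound on the full $\mathcal{V}_\lambda^\beta$-norm, giving both \eqref{eq:-194} and uniqueness in $\mathcal{V}_\lambda^\beta[0,\tau]$. For existence I would approximate: for each $n$ solve the BSDE on $[0,\tau\wedge n]$ with terminal datum $\xi\,\mathbf{1}_{\{\tau\le n\}}$ at $\tau\wedge n$, which is admissible by Theorem \ref{thm:-1-1} together with Remark \ref{rem:}. The a priori estimate bounds $(Y^n,Z^n)$ uniformly in $n$, and the same monotone estimate applied to the differences $(Y^n-Y^m,Z^n-Z^m)$ shows the sequence is Cauchy in $\mathcal{V}_\lambda^\beta[0,\tau]$; its limit $(Y,Z)$ satisfies \eqref{eq:-147}, while \eqref{eq:-151} follows from the uniform weighted integrability furnished by the estimate.

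The principal obstacle is the unbounded horizon coupled with the two mutually singular speeds. In contrast to the bounded case of Theorem \ref{thm:-1-1}, where a sufficiently large $\beta$ alone closes the argument, here the a priori estimate must remain uniform as $T\to\infty$; this is unattainable from the Lipschitz hypotheses alone and is exactly why the one-sided monotonicity \eqref{eq:-187}--\eqref{eq:-172} and the sign condition \eqref{eq:-193} are imposed. The two delicate points are therefore the horizon-uniform dissipativity of the weighted estimate and the verification of the terminal condition \eqref{eq:-151} in the limit, each of which must be carried out so as to respect the incomparability of $dt$ and $d\langle W\rangle$.
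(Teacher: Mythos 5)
Your part (a) and your a priori estimate in part (b) follow the paper's own weighted It\^o route (part (a) is proved in the paper exactly as Theorem \ref{thm:-1-1}(a), and your monotone estimate is the analogue of its Lemma \ref{prop:-16}). Where you genuinely diverge is the existence argument in part (b). The paper never truncates the horizon: it runs the method of continuity in a parameter $\alpha\in[0,1]$ --- Lemma \ref{lem:-22} for the driver-free equation via the martingale representation, the a priori estimate of Lemma \ref{prop:-16}, and the continuation step of Corollary \ref{cor:-7} --- first in the normalized case $\kappa_{0}=0$, $\kappa_{1}=K_{1}^{2}/2$, and then reduces general $(\kappa_{0},\kappa_{1})$ to that case by an exponential change of variables $\tilde{Y}_{t}=Y_{t}\tilde{\mathrm{e}}_{t}$. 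That design exists for one reason: every contraction in it has constant $\epsilon C$ with $C=C(K_{0},K_{1},\beta)$, so smallness comes from the continuation increment $\epsilon$, never from largeness of $\beta$.

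This is exactly where your existence argument has a genuine gap. You solve the truncated problem on $[0,\tau\wedge n]$ with terminal datum $\xi 1_{\{\tau\le n\}}$ ``by Theorem \ref{thm:-1-1} together with Remark \ref{rem:}''. But Theorem \ref{thm:-1-1}(b) gives existence only for $\beta_{i}>36K_{i}^{2}$, and only under the integrability hypotheses \eqref{eq:-6} and \eqref{eq:-191} \emph{at that large weight}. Under the hypotheses of Theorem \ref{thm:-2-1}, the data are weighted-integrable only at the given $\beta$, which may be as small as $(1,1)$ while $K_{1}$ is arbitrarily large --- precisely the regime the monotonicity assumptions are designed to cover. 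Since $\langle W\rangle_{\tau\wedge n}$ is unbounded even though $\tau\wedge n\le n$, finiteness of $\mathbb{E}_{\lambda}\big[\xi^{2}1_{\{\tau\le n\}}e^{2\widehat{\beta}_{1}\langle W\rangle_{\tau\wedge n}}\big]$ for $\widehat{\beta}_{1}>36K_{1}^{2}$ does not follow from \eqref{eq:-196}, and similarly for the source terms in \eqref{eq:-183}; so the truncated problems are not within the scope of Theorem \ref{thm:-1-1}, and your iteration cannot start. A repair needs a second approximation layer: truncate $\xi$, $g(\cdot,0)$, $f(\cdot,0,0)$ to bounded data, for which Corollary \ref{cor:-8} does supply exponential integrability of $\langle W\rangle$ at any weight on a bounded horizon, solve those problems by Theorem \ref{thm:-1-1}, and then remove both truncations with your monotone estimate --- or simply run the paper's continuity method. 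As written, the step fails.

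Two further points. In part (a), ``a Gronwall argument with respect to the combined clock'' is not an available operation: the error term $\mathbb{E}_{\lambda}\int\hat{Y}^{2}\mathrm{e}\,d\langle W\rangle$ is an integral against a random measure and is not a function of $t\mapsto\mathbb{E}_{\lambda}[\hat{Y}_{t\wedge\tau}^{2}\mathrm{e}_{t\wedge\tau}]$, so Gronwall does not apply; the drift contributions must instead be absorbed into the exponential weight, as in the paper's Theorem \ref{thm:-1-1}(a) computation, and one must then check that the weight needed for absorption is compatible with the weight appearing in the terminal condition \eqref{eq:-151}. Finally, your claim that \eqref{eq:-193} ``is precisely what renders the coefficients non-positive'' does not match the computation: using \eqref{eq:-187}--\eqref{eq:-172} directly, the coefficients of $\Phi Y^{2}\,dt$ and $\Phi Y^{2}\,d\langle W\rangle$ become non-positive under $\beta_{0}+\kappa_{0}>0$ and $\beta_{1}+\kappa_{1}-K_{1}^{2}/2>0$, which coincides with the literal \eqref{eq:-193} only in the normalized case $\kappa_{0}=0$, $\kappa_{1}=K_{1}^{2}/2$; in the paper this sign bookkeeping is routed through the exponential transformation, and a direct one-step estimate like yours must reconcile the two conditions explicitly rather than identify them.
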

The last result of this paper addresses the representation of weak
solutions of semi-linear parabolic PDEs by those of corresponding
BSDEs. We first formulate the meaning of semi-linear parabolic PDEs
on $\mathbb{S}$. To this end, we introduce the following definition
of (weak) gradients of functions in $\mathcal{F}(\mathbb{S})$.
\begin{defn}
\label{def:-2-1}For any $u\in\mathcal{F}(\mathbb{S})$, in view of
Theorem \ref{thm:-}, we define the gradient $\nabla u$ of $u$ as
the unique element in $L^{2}(\mathbb{S};\nu)$ such that $M_{t}^{[u]}=\int_{0}^{t}\nabla u(X_{r})dW_{r},\;t\ge0$.
\end{defn}
\begin{rem}
\label{rem:-2}(i) Clearly, for the (reflected) Brownian motion on
$Q=[0,1]^{d}$, the definition of gradients given above coincides
with that of weak derivatives for functions in the Sobolev space $W^{1,2}(Q)$
of $L^{2}(Q)$ functions with distributional derivatives in $L^{2}(Q)$.

(ii) As pointed out in Section \ref{sec:}, there exist several definitions
of gradients on fractals in literature (for example, \citep{Ki93,Str00,Tep00,CS03,Hin10,HRT13,HT15,BK16})
specifically introduced to address different questions. Our definition
of gradients of functions is a slight variant of those in \citep{Ki93,Str00,Tep00},
and equivalent to those in \citep{Hin10,HRT13,BK16}. In \citep{HRT13}
and \citep{BK16}, gradient operators were introduced for Dirichlet
forms admitting (measure-valued) carré du champ operators $\Gamma(\cdot,\cdot)$,
which send functions $u,v\in\mathcal{F}(\mathbb{S})$ to $\Gamma(u,v)=\nu_{\langle u,v\rangle}$
(cf. Definition \ref{def:-5}). For the standard Dirichlet form on
the gasket, the isometry $\int_{\mathbb{S}}|\nabla u|^{2}\,d\nu=\int_{\mathbb{S}}d\Gamma(u,u)$
is valid as an immediate consequence of definitions, and the pointwise
equality $\nabla u=d\Gamma(u,\psi)/d\nu$ holds $\nu$-a.e. with a
suitable choice of $\psi\in\mathcal{F}(\mathbb{S})$,\footnote{The Radon-Nikodym derivative $(d\Gamma(u,\psi)/d\nu)(x)$ is denoted
by $\langle u,\psi\rangle_{\mathcal{H}_{x}}$ in \citep{HRT13,BK16}.} which is possible due to the results of \citep[Lemma 3.2, Theorem 4.1, and Theorem 4.2]{Hin13}
(see also \citep[Proposition 4.2]{BK16}).
\end{rem}
As an immediate consequence of Definition \ref{def:-2-1}, we have,
for any $u,\;v\in\mathcal{F}(\mathbb{S})$, that $\mathcal{E}(u,v)=\langle\nabla u,\nabla v\rangle_{\nu}$
and $\nu_{\langle u,v\rangle}=\nabla u\,\nabla v\cdot\nu$. Let $\Phi\in C^{1}(\mathbb{R}^{m})$
and $u_{1},\dots,u_{m}\in\mathcal{F}(\mathbb{S})$. Then $\Phi(\mathbf{u})\in\mathcal{F}(\mathbb{S})$
and $\nabla\Phi(\mathbf{u})=\sum_{i=1}^{m}\partial_{i}\Phi(\mathbf{u})\cdot\nabla u_{i}\;\;\nu\mbox{-a.e.}$,
where $\mathbf{u}=(u_{1},\dots,u_{m})$. In particular, for any $u,v\in\mathcal{F}(\mathbb{S})$,
$\nabla(uv)=\nabla u\cdot v+u\cdot\nabla v\;\;\nu\mbox{-a.e.}$

As an application of the theory of BSDEs, we consider semi-linear
parabolic equations on $\mathbb{S}$. On Euclidean spaces, the non-linear
parabolic PDE $\partial_{t}u+\mathcal{L}u=f(t,x,u,\nabla u)$ is interpreted
as $(\partial_{t}u+\mathcal{L}u)dx=f(t,x,u,\nabla u)dx$ when one
considers its weak solutions. The natural analogue of these PDEs on
$\mathbb{S}$ would be $(\partial_{t}u+\mathcal{L}u)\cdot\mu=f(t,x,u,\nabla u)\cdot\nu$.
However, for any $h\in\mathcal{F}(\mathbb{S})$, the gradient $\nabla h$
is defined only as a function in $L^{2}(\mathbb{S};\nu)$ and $\nu$
is singular to $\mu$, therefore, the above formulation does not have
a proper meaning for functions $u$ such that $u(t)\in\mathcal{F}(\mathbb{S})$
for $t\in[0,T)$. In view of this, it is more appropriate to formulate
(Dirichlet) terminal-boundary value problems for semi-linear parabolic
PDEs as those for measure equations.
\begin{defn}
For any $v\in L^{2}(\mu)$, let $\Vert v\Vert_{\mathcal{F}^{-1}}=\sup\big\{\langle u,v\rangle_{\mu}:u\in\mathcal{F}(\mathbb{S}),\,\mathcal{E}_{1}(u)\le1\big\}$.
The space $\mathcal{F}^{-1}(\mathbb{S})$ is defined to the $\Vert\cdot\Vert_{\mathcal{F}^{-1}}$-completion
of $L^{2}(\mu)$.
\end{defn}
\begin{defn}
Let $u\in L^{2}(0,T;\mathcal{F}(\mathbb{S}))$; that is, $\int_{0}^{T}\mathcal{E}_{1}(u(t))\,dt<\infty$.
The $\mathcal{F}(\mathbb{S})$-valued function $u$ is said to have\emph{
weak derivative in }$L^{2}(0,T;\mathcal{F}^{-1}(\mathbb{S}))$, if
there exists an $\mathcal{F}^{-1}(\mathbb{S})$-valued function $\partial_{t}u$
on $[0,T]$ such that 
\[
\Big(\int_{0}^{T}\Vert\partial_{t}u(t)\Vert_{\mathcal{F}^{-1}}^{2}\,dt\Big)^{1/2}<\infty\;\;\text{and}\;\int_{0}^{T}\langle u(t),\partial_{t}v(t)\rangle_{\mu}\,dt=-\int_{0}^{T}\langle\partial_{t}u(t),v(t)\rangle_{\mu}\,dt
\]
 for all $v\in C^{1}(0,T;\mathcal{F}(\mathbb{S}))$ with $v(0)=v(T)=0$.
\end{defn}
\begin{rem}
\label{rem:-10}Clearly, if $u\in L^{2}(0,T;\mathcal{F}(\mathbb{S}))\cap C^{1}(0,T;L^{2}(\mu))$,
then $u$ has a weak derivative in $L^{2}(0,T;\mathcal{F}^{-1}(\mathbb{S}))$.
\end{rem}
The following can be easily shown by a standard mollifier argument.
(See, for example, \citep[Theorem 3, Section 5.9]{Eva10}.)
\begin{lem}
\label{lem:-1}Suppose $u\in L^{2}(0,T;\mathcal{F}(\mathbb{S}))$
has weak derivative $\partial_{t}u$ in $L^{2}(0,T;\mathcal{F}^{-1}(\mathbb{S}))$.
Then $t\mapsto\Vert u(t)\Vert_{L^{2}(\mu)}^{2},\,t\in[0,T]$ is absolutely
continuous and $\frac{d}{dt}\Vert u(t)\Vert_{L^{2}(\mu)}^{2}=2\langle\partial_{t}u(t),u(t)\rangle_{\mu}\;\;\text{a.e. }t\in[0,T]$.
\end{lem}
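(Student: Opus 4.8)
The plan is to invoke the classical evolution-triple (Gelfand triple) machinery, adapting the time-mollification argument of \citep[Section 5.9]{Eva10} to the triple
\[
\mathcal{F}(\mathbb{S})\hookrightarrow L^{2}(\mu)\hookrightarrow\mathcal{F}^{-1}(\mathbb{S}).
\]
First I would record that both embeddings are continuous and dense: the first because $\Vert v\Vert_{L^{2}(\mu)}\le\mathcal{E}_{1}(v)^{1/2}$ and because $\mathcal{F}(\mathbb{S})$ is dense in $L^{2}(\mu)$ (the standard Dirichlet form being regular), and the second by the very definition of $\mathcal{F}^{-1}(\mathbb{S})$ as the $\Vert\cdot\Vert_{\mathcal{F}^{-1}}$-completion of $L^{2}(\mu)$. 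Consequently the $L^{2}(\mu)$ inner product extends continuously to a duality pairing between $\mathcal{F}^{-1}(\mathbb{S})$ and $\mathcal{F}(\mathbb{S})$, and the expression $\langle\partial_{t}u(t),u(t)\rangle_{\mu}$, with $\partial_{t}u(t)\in\mathcal{F}^{-1}(\mathbb{S})$ and $u(t)\in\mathcal{F}(\mathbb{S})$, is well defined for a.e. $t$ and satisfies $|\langle\partial_{t}u(t),u(t)\rangle_{\mu}|\le\Vert\partial_{t}u(t)\Vert_{\mathcal{F}^{-1}}\,\mathcal{E}_{1}(u(t))^{1/2}$; by Cauchy--Schwarz the right-hand side of the claimed identity therefore lies in $L^{1}(0,T)$.

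Next I would carry out the mollification in time. I extend $u$ to an $\mathcal{F}(\mathbb{S})$-valued function on $\mathbb{R}$ (by reflection across $0$ and $T$, or by first working on compactly contained subintervals to avoid boundary effects) and set $u^{\varepsilon}=\eta_{\varepsilon}\ast u$ for a standard mollifier $\eta_{\varepsilon}$ in the time variable. Then $u^{\varepsilon}$ is smooth in $t$ with values in $\mathcal{F}(\mathbb{S})$, its time derivative satisfies $\dot{u}^{\varepsilon}=(\partial_{t}u)^{\varepsilon}=\eta_{\varepsilon}\ast\partial_{t}u$ since convolution commutes with the weak time derivative, and for such smooth curves the ordinary chain rule gives
\[
\frac{d}{dt}\Vert u^{\varepsilon}(t)\Vert_{L^{2}(\mu)}^{2}=2\langle\dot{u}^{\varepsilon}(t),u^{\varepsilon}(t)\rangle_{\mu},
\]
whence $\Vert u^{\varepsilon}(t)\Vert^{2}-\Vert u^{\varepsilon}(s)\Vert^{2}=2\int_{s}^{t}\langle\dot{u}^{\varepsilon}(r),u^{\varepsilon}(r)\rangle_{\mu}\,dr$ for all $s,t$. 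Standard properties of mollifiers yield $u^{\varepsilon}\to u$ in $L^{2}(0,T;\mathcal{F}(\mathbb{S}))$ and $\dot{u}^{\varepsilon}\to\partial_{t}u$ in $L^{2}(0,T;\mathcal{F}^{-1}(\mathbb{S}))$ as $\varepsilon\to0$.

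Finally I would pass to the limit. The two convergences above together with the bilinear continuity of the duality pairing give $\langle\dot{u}^{\varepsilon},u^{\varepsilon}\rangle_{\mu}\to\langle\partial_{t}u,u\rangle_{\mu}$ in $L^{1}(0,T)$, so the integrated identity passes to the limit and shows that, after modification on a null set, $t\mapsto\Vert u(t)\Vert_{L^{2}(\mu)}^{2}$ equals $\Vert u(s)\Vert_{L^{2}(\mu)}^{2}+2\int_{s}^{t}\langle\partial_{t}u,u\rangle_{\mu}\,dr$; since the integrand is in $L^{1}(0,T)$, this function is absolutely continuous with the asserted derivative a.e. The main obstacle is the limit passage at fixed endpoints $s,t$: one must control the pointwise values $\Vert u^{\varepsilon}(t)\Vert_{L^{2}(\mu)}^{2}$ uniformly in order to secure convergence there (equivalently, to produce the continuous $L^{2}(\mu)$-representative of $u$). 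This is handled by applying the same integrated identity to the difference $u^{\varepsilon}-u^{\delta}$, integrating in $s$ and using the $L^{2}$-convergences to conclude that $\{u^{\varepsilon}\}$ is Cauchy in $C([0,T];L^{2}(\mu))$, after which the identity for the limit follows on all of $[0,T]$.
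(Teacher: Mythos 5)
Your proof is correct and is precisely the argument the paper intends: the paper gives no details, saying only that the lemma ``can be easily shown by a standard mollifier argument'' with a citation to \citep[Theorem 3, Section 5.9]{Eva10}, and your proposal carries out exactly that argument, adapted to the Gelfand triple $\mathcal{F}(\mathbb{S})\hookrightarrow L^{2}(\mu)\hookrightarrow\mathcal{F}^{-1}(\mathbb{S})$. In particular, your treatment of the pointwise endpoint values by showing $\{u^{\varepsilon}\}$ is Cauchy in $C([0,T];L^{2}(\mu))$ is the same device used in the cited proof of Evans, so nothing essential is missing.
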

As an application, we consider the following semi-linear parabolic
PDEs on $\mathbb{S}$. We should point out that there exist several
formulations of PDEs on fractals, and our formulation should be regarded
as an extension in this direction. (See, for example, \citep{Str06,HRT13,HT13}
and references therein.)
\begin{defn}
\label{def:-3-1}Let $\varphi\in C^{1,0}([0,T]\times\mathrm{V}_{0})$
and $\psi\in L^{2}(\mu)$. A function $u$ on $[0,T]\times\mathbb{S}$
is said to be a\emph{ weak solution} of the (Dirichlet) terminal-boundary
value problem
\begin{equation}
\left\{ \begin{aligned}(\partial_{t}u & +\mathcal{L}u)\cdot\mu=-g(t,x,u)\cdot\mu-f(t,x,u,\nabla u)\cdot\nu,\;\;(t,x)\in[0,T)\times\mathbb{S}\backslash\mathrm{V}_{0},\\
u(t & ,x)=\varphi(t,x)\;\text{on}\;[0,T)\times\mathrm{V}_{0},\;\ u(T)=\psi,
\end{aligned}
\right.\label{eq:}
\end{equation}
if the following are satisfied:

$\hypertarget{WS.1}{\text{(WS.1)}}$ $u\in C([0,T)\times\mathbb{S})\cap L^{2}(0,T;\mathcal{F}(\mathbb{S}))$
and $u$ has weak derivative $\partial_{t}u$ in $L^{2}(0,T;\mathcal{F}^{-1}(\mathbb{S}))$;

$\hypertarget{WS.2}{\text{(WS.2)}}$ for any $v\in\mathcal{F}(\mathbb{S}\backslash\mathrm{V}_{0})$,
\begin{equation}
\frac{d}{dt}\langle u(t),v\rangle_{\mu}-\mathcal{E}(u(t),v)=-\langle g(t,u(t)),v\rangle_{\mu}-\langle f(t,u(t),\nabla u(t),v\rangle_{\nu}\;\;\text{a.e.}\;t\in[0,T];\label{eq:-29}
\end{equation}

$\hypertarget{WS.3}{\text{(WS.3)}}$ $u(t,x)=\varphi(t,x)$ for each
$x\in\mathrm{V}_{0}$ and a.e. $t\in[0,T)$, and $\lim_{t\to T}u(t)=\psi$
in $L^{2}(\mu)$.
\end{defn}
\begin{rem}
\label{rem:-3}(i) We point out that the term $\langle f(t,u(t),\nabla u(t),v\rangle_{\nu}$
in (\ref{eq:-29}) is well-defined. In fact, $\nabla u$ is $\nu$-a.e.
defined and $u$ is pointwise defined by virtue of the fact $\mathcal{F}(\mathbb{S})\subseteq C(\mathbb{S})$,
which is, as pointed out in Section \ref{subsec:}, a corollary of
(\ref{eq:-40}).

\ (ii) The equation (\ref{eq:-29}) is well-posed by virtue of Lemma
\ref{lem:-1}.

(iii) In view of $\hyperlink{WS.2}{\text{(WS.2)}}$ and the singularity
of $\nu$ and $\mu$, we see that if $f\not=0$ then the PDE (\ref{eq:})
does not admit a solution $u$ such that $u\in C^{1,0}([0,T)\times\mathbb{S})$
and $u(t)\in\mathrm{Dom}(\mathcal{L})$, $t\in[0,T)$. This suggests
that the theory of PDEs on $\mathbb{S}$ is quite different from that
on $\mathbb{R}^{d}$.
\end{rem}
To construct weak solutions of the PDE (\ref{eq:}), a natural idea
is to show that the solution mapping of a related linear equation
is a contraction in some suitable Banach space, then iterate solutions
of this linear equation. However, difficulties arise immediately due
to the singularity of $\mu$ and $\nu$. To address this difficulty,
our idea is that, though calculus on fractals might be considerably
different from those on $\mathbb{R}^{d}$, stochastic calculus however
remains similar to its classical counterpart. Specifically, we have
following Feynman-Kac representation, which gives a BSDE approach
for semi-linear parabolic PDEs on $\mathbb{S}$.
\begin{thm}
\label{thm:-3-1}Let $\varphi\in C^{1,0}([0,T]\times\mathrm{V}_{0}),\;\psi\in L^{2}(\mu)$.
Let $g\in C([0,T]\times\mathbb{S}\times\mathbb{R})$ and $f\in C([0,T]\times\mathbb{S}\times\mathbb{R}^{2})$.
If the PDE (\ref{eq:}) admits a solution $u$, then, for each $s\in[0,T)$
and each $x\in\mathbb{S}$,
\[
(Y_{t}^{(s)},Z_{t}^{(s)})=(u(t+s,X_{t}),\nabla u(t+s,X_{t}))
\]
is the unique solution (in the sense of Theorem \ref{thm:-1-1}.(a))
of the BSDE
\begin{equation}
\left\{ \begin{aligned}dY_{t}^{(s)} & =-g(t+s,X_{t},Y_{t}^{(s)})dt\\
 & \quad-f(t+s,X_{t},Y_{t}^{(s)},Z_{t}^{(s)})d\langle W\rangle_{t}+Z_{t}^{(s)}dW_{t},\;\;t\in[0,\sigma^{(s)}),\\
Y_{\sigma^{(s)}}^{(s)} & =\Psi(\sigma^{(s)},X_{\sigma^{(s)}}),
\end{aligned}
\right.\label{eq:-3}
\end{equation}
on $\big(\Omega,\{\mathcal{F}_{t}\},\mathbb{P}_{x}\big)$ for each
$x\in\mathbb{S}$, where $\sigma^{(s)}=(T-s)\wedge\sigma_{\mathrm{V}_{0}},\;s\in[0,T]$,
and
\[
\Psi(t,x)=\begin{cases}
\varphi(t,x), & \mbox{if }(t,x)\in[0,T)\times\mathrm{V}_{0},\\
\psi(x), & \mbox{if }(t,x)\in\{T\}\times\mathbb{S}\backslash\mathrm{V}_{0}.
\end{cases}
\]
Moreover, the solution of (\ref{eq:}) is unique, and has the following
representation
\begin{equation}
u(t,x)=\mathbb{E}_{x}(Y_{0}^{(t)})\quad\text{for all}\ (t,x)\in[0,T)\times\mathbb{S}.\label{eq:-22}
\end{equation}
\end{thm}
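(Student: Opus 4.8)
The plan is to establish Theorem~\ref{thm:-3-1} in two stages: first show that the proposed pair $(Y^{(s)}_t, Z^{(s)}_t) = (u(t+s,X_t), \nabla u(t+s,X_t))$ solves the BSDE \eqref{eq:-3}, and then use this together with the uniqueness part of Theorem~\ref{thm:-1-1}.(a) to deduce both the representation \eqref{eq:-22} and the uniqueness of the weak solution of the PDE \eqref{eq:}. The heart of the matter is a \emph{forward} decomposition: if $u$ solves the PDE, I must show that the process $u(t+s, X_t)$, stopped at $\sigma^{(s)} = (T-s)\wedge\sigma_{\mathrm{V}_0}$, admits the semimartingale decomposition dictated by \eqref{eq:-3}. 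This is a time-dependent It\^o--Fukushima formula for the additive functional $u(t+s,X_t)$ associated with the Dirichlet form $(\mathcal{E},\mathcal{F}(\mathbb{S}\backslash\mathrm{V}_0))$ (the Dirichlet boundary condition being exactly what the killing at $\mathrm{V}_0$ encodes).

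\emph{First} I would fix $s$ and $x$ and analyze the decomposition of $u(t+s,X_t)$ on $[0,\sigma^{(s)})$. Since $u$ is only a weak solution, I cannot apply a classical It\^o formula directly; instead I would approximate $u$ in the time variable by smoother functions (e.g. mollifying in $t$, or approximating via the resolvent in the space variable) so that the Fukushima decomposition applies to each approximant, and then pass to the limit. The key input is the weak formulation \eqref{eq:-29} together with Lemma~\ref{lem:-1}: the absolute continuity of $t\mapsto\|u(t)\|^2_{L^2(\mu)}$ and the identity $\frac{d}{dt}\|u(t)\|^2_{L^2(\mu)} = 2\langle\partial_t u(t),u(t)\rangle_\mu$ give me control over the time-derivative term. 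The Fukushima decomposition of the additive functional associated to $u(t+s,\cdot)$ splits $u(t+s,X_t)-u(s,X_0)$ into a martingale additive functional $M^{[u]}$ and a predictable part of bounded variation. By Theorem~\ref{thm:-} and Definition~\ref{def:-2-1}, the martingale part is $\int_0^t \nabla u(r+s,X_r)\,dW_r$, which identifies $Z^{(s)}_r = \nabla u(r+s,X_r)$. The bounded-variation part must then be matched, using \eqref{eq:-29}, to the two drift terms: the $g$-term integrated against $dt$ (the Revuz measure of the PCAF corresponding to $\mu$, i.e.\ $t$ itself) and the $f$-term integrated against $d\langle W\rangle_r$ (whose Revuz measure is the Kusuoka energy measure $\nu = \nu_{\langle W\rangle}$, by Theorem~\ref{thm:-}.(i)). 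The singularity of $\mu$ and $\nu$ (Remark~\ref{rem:-6}) is precisely what forces the two drift terms to appear against mutually singular integrators, and the matching of Revuz measures is the mechanism that disentangles them.

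\emph{The hard part} will be justifying this time-dependent It\^o--Fukushima formula for a merely weak solution and controlling the boundary behavior at $\mathrm{V}_0$ and at the terminal time $T$. Two technical points deserve care. The terminal/boundary condition $Y^{(s)}_{\sigma^{(s)}} = \Psi(\sigma^{(s)},X_{\sigma^{(s)}})$ requires matching the continuous extension of $u$ (using $\mathcal{F}(\mathbb{S})\subseteq C(\mathbb{S})$ via \eqref{eq:-40}) with $\varphi$ on the event $\{\sigma_{\mathrm{V}_0}<T-s\}$ and with $\psi$ (in the $L^2(\mu)$-limit sense of (WS.3)) on the complementary event; the almost-sure continuity of the gasket Brownian motion and the fact that $\emptyset$ is the only polar set make the hitting of $\mathrm{V}_0$ well-behaved. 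I would also verify the integrability \eqref{eq:-6} needed to invoke Theorem~\ref{thm:-1-1}.(a), which should follow from $\psi\in L^2(\mu)$ and the boundedness of $\varphi$, combined with the exponential integrability of the quadratic process $\langle W\rangle$ (Theorem~\ref{thm:-1-1}, referenced in the introduction).

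\emph{Finally}, once $(Y^{(s)},Z^{(s)})$ is shown to solve \eqref{eq:-3}, the representation \eqref{eq:-22} is immediate: evaluating at $t=0$ gives $Y^{(t)}_0 = u(t,X_0) = u(t,x)$ $\mathbb{P}_x$-a.s., and taking $\mathbb{E}_x$ (noting $Y^{(t)}_0$ is $\mathcal{F}_0$-measurable and hence deterministic) yields $u(t,x)=\mathbb{E}_x(Y^{(t)}_0)$. For the uniqueness of the weak solution, I would argue that \emph{any} weak solution $u$ gives rise, via the construction above, to a solution of the \emph{same} BSDE \eqref{eq:-3} with the \emph{same} terminal-boundary data $\Psi$. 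Since Theorem~\ref{thm:-1-1}.(a) guarantees the BSDE admits at most one solution, two weak solutions $u,\tilde u$ must produce BSDE solutions that agree $\mathbb{P}_x$-a.s.; evaluating at $t=0$ then forces $u(t,x)=\tilde u(t,x)$ for every $x\in\mathbb{S}$ and every $t\in[0,T)$ through the representation \eqref{eq:-22}. Thus uniqueness of the PDE follows from uniqueness of the BSDE, which is the conceptual payoff advertised in the introduction: stochastic calculus on $\mathbb{S}$ remains classical enough to resolve a PDE whose direct analytic treatment is obstructed by the singularity of $\mu$ and $\nu$.
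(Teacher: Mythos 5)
Your overall skeleton matches the paper's: show that $(u(t+s,X_t),\nabla u(t+s,X_t))$ solves the BSDE (\ref{eq:-3}), then get the representation (\ref{eq:-22}) and PDE uniqueness from the uniqueness part of Theorem \ref{thm:-1-1}.(a); your final paragraph is essentially the paper's Step 3 endgame and is fine. But the central step --- identifying the drift of $u(t+s,X_t)$ --- contains a genuine gap. You assert that the (time-dependent) Fukushima decomposition splits $u(t+s,X_t)-u(s,X_0)$ into the martingale part $\int_0^t\nabla u(r+s,X_r)\,dW_r$ and ``a predictable part of bounded variation'' which you then propose to match to the two drifts via Revuz measures. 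For a merely weak solution this is circular: a priori the non-martingale part is only a continuous process of \emph{zero quadratic variation} (this is exactly what Lemma \ref{lem:} gives, and only under $\mathbb{P}_{\mu}$), not a process of bounded variation, and Revuz-measure matching is not available for it. Showing that this zero-energy part equals $-\int_0^{t}g^{(s)}(r,X_r)\,dr-\int_0^{t}f^{(s)}(r,X_r)\,d\langle W\rangle_r$ \emph{is} the theorem. The paper's mechanism is different and avoids the circularity: it forms the candidate process $M_t^{(s)}$ that already includes both drift integrals, proves $\tfrac{d}{dt}\mathbb{E}_{\mu}[M_t^{(s)}\eta(X_0)]=0$ by testing the weak formulation (\ref{eq:-29}) against $P_t^0\eta$ and exploiting the $\mu$-symmetry of the killed process (Step 1--2), and then kills the residual $Q^{(s)}=M^{(s)}-\int\nabla u\,dW$ by the observation that a martingale with zero quadratic variation is identically zero. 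Your alternative of mollifying in time or using resolvent approximants runs into the obstruction recorded in Remark \ref{rem:-3}(iii): approximants in $\mathrm{Dom}(\mathcal{L})$ satisfy equations whose right-hand sides are absolutely continuous with respect to $\mu$, so the $\nu$-singular drift only emerges in the limit, and identifying that limit is precisely the difficulty you have deferred rather than resolved.

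A second, related omission: the theorem asserts the BSDE property under $\mathbb{P}_x$ for \emph{every} $x\in\mathbb{S}$, while every argument you invoke (the weak formulation, Lemma \ref{lem:-1}, the decomposition of Lemma \ref{lem:}, any $L^2(\mathbb{P}_\mu)$ limit passage) yields statements only for $\mu$-a.e.\ $x$. The paper spends a substantial part of Steps 2 and 3 upgrading ``$\mu$-a.e.\ $x$'' to ``all $x\in\mathbb{S}\backslash\mathrm{V}_0$'': it proves continuity of $x\mapsto\mathbb{E}_x(M_t^{(s)})$ and of $x\mapsto\mathbb{E}_x\big(\langle Q^{(s)}\rangle_t\big)$ using the heat kernel bounds of Lemma \ref{lem:-29} and the estimate (\ref{eq:-31}) (which requires $\int_0^T\mathcal{E}(u(t))\,dt<\infty$ and the boundedness of $u$). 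Note that quasi-everywhere statements would suffice (only $\emptyset$ is polar), but $\mu$-a.e.\ statements do not self-improve to quasi-everywhere ones without exactly this kind of regularity argument; your proposal never addresses it, yet without it neither the representation (\ref{eq:-22}) ``for all $(t,x)$'' nor the pointwise uniqueness of the weak solution follows.
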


\section{\label{sec:-2}Several results on Brownian martingale}

In this section, we collect several results, which will be needed
in later sections, on Brownian martingale. In particular, a representation
theorem for square-integrable martingale and a time-dependent Itô-Fukushima
decomposition are given. Moreover, we prove the exponential integrability
of the quadratic process of Brownian martingale, which is the main
technical result in this section.

\subsection{\label{subsec:-4}Martingale representations}

In this subsection, we consider representations for square-integrable
martingales adapted to filtrations induced by Brownian motion. The
following was proved in \citep[Theorem 3]{QY12} for general continuous
Hunt processes.
\begin{thm}
\label{thm:-1}Let $(\Omega,\{X_{t}\},\{\mathbb{P}_{x}\})$ be a continuous
Hunt process with state space $S$, and $\lambda\in\mathcal{P}(S)$.
Let $\mathcal{F}_{t}^{\lambda}$ be the $\mathbb{P}_{\lambda}$-completion
of $\sigma(X_{r}:r\le t)$. Then any local martingales on $(\Omega,\{\mathcal{F}_{t}^{\lambda}\},\mathbb{P}_{\lambda})$
are continuous. Furthermore, suppose that there exist a sub-algebra
$\mathcal{A}$ of $L^{2}(S;\lambda)\cap\mathcal{B}_{b}(S)$ and finitely
many continuous martingales $W^{1},\dots,W^{d}$ such that the following
are satisfied:\smallskip{}

(i) $\sigma(\mathcal{A})=\mathcal{B}(S)$ and $R_{\alpha}(\mathcal{A})\subseteq\mathcal{A}$
for each $\alpha>0$, where $R_{\alpha}$ denotes the $\alpha$-resolvent
of $\{X_{t}\}$;\smallskip{}

(ii) for any $f\in\mathcal{A}$ and any $\alpha>0$, there exist $\{\mathcal{F}_{t}^{\lambda}\}$-predictable
processes $f^{1},\dots,f^{d}$ such that $M_{t}^{\alpha,f}=\sum_{j=1}^{d}\int_{0}^{t}f^{j}\,dW_{r}^{j},\;t\ge0\;\;\mathbb{P}_{\lambda}$-a.s.,
where $M^{\alpha,f}$ is the martingale part of $R_{\alpha}u(X_{t})-R_{\alpha}u(X_{0})$;\smallskip{}

(iii) the matrix $\big(\langle W^{i},W^{j}\rangle_{t}\big)_{i,j}$
is strictly positive definite for all $t\ge0\;\;\mathbb{P}_{\lambda}$-a.s.\\
Then, for any square-integrable martingale $M$ on $(\Omega,\{\mathcal{F}_{t}^{\lambda}\},\mathbb{P}_{\lambda})$,
there uniquely exist $\{\mathcal{F}_{t}^{\lambda}\}$-predictable
processes $f^{1},\dots,f^{d}$ such that $M_{t}=M_{0}+\sum_{j=1}^{d}\int_{0}^{t}f^{j}(r)dW_{r}^{j},\;t\ge0\;\;\mathbb{P}_{\lambda}$-a.s.
\end{thm}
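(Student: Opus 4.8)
The plan is to treat the two assertions in turn: first the continuity of every local martingale, then the representation of square-integrable martingales as integrals against $W^1,\dots,W^d$. For the first assertion I would use that a Hunt process is quasi-left-continuous and, in the present setting, has continuous paths. After completion the filtration $\{\mathcal{F}_t^\lambda\}$ is right-continuous (a Blumenthal-type $0$--$1$ argument) and quasi-left-continuous, so $\mathcal{F}_{\tau-}=\mathcal{F}_\tau$ at every predictable time $\tau$; this forbids any martingale (taken in its right-continuous version) from jumping at accessible times. Since $X$ has continuous paths, there is no jumping measure to compensate, so a martingale cannot jump at totally inaccessible times either, and its purely discontinuous part vanishes. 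Hence every local martingale is continuous. I would keep this as a short preliminary and not dwell on it.

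For the representation, let $\mathcal{H}^2_0$ be the Hilbert space of $L^2$-bounded $\{\mathcal{F}_t^\lambda\}$-martingales $N$ with $N_0=0$ (continuous, by the first part), under $\|N\|^2=\mathbb{E}_\lambda[\langle N\rangle_\infty]$; a general square-integrable martingale is recovered by stopping at deterministic times $T$ and letting $T\to\infty$, patching the integrands by uniqueness. Let $\mathcal{S}\subseteq\mathcal{H}^2_0$ be the stable subspace generated by $W^1,\dots,W^d$, that is, the closure of the stochastic integrals $\sum_j\int_0^{\cdot}f^j\,dW^j$; it is closed and consists of continuous martingales. Hypothesis (ii) says exactly that each martingale part $M^{\alpha,u}$ of $R_\alpha u(X_\cdot)$ ($u\in\mathcal{A}$, $\alpha>0$) lies in $\mathcal{S}$. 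The existence statement therefore reduces to proving $\mathcal{S}=\mathcal{H}^2_0$.

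The crux, and the main obstacle, is to show that the smallest closed stable subspace $\mathcal{H}$ containing every $M^{\alpha,u}$ is all of $\mathcal{H}^2_0$; since $\mathcal{H}\subseteq\mathcal{S}$ by (ii), this gives $\mathcal{S}=\mathcal{H}^2_0$. Equivalently, any bounded martingale $N$ with $N_0=0$ that is strongly orthogonal to $\mathcal{H}$, so that $\langle N,M^{\alpha,u}\rangle\equiv0$ for all $u,\alpha$, must vanish. The engine is the Dynkin--Fukushima decomposition $R_\alpha u(X_t)=R_\alpha u(X_0)+M^{\alpha,u}_t+\int_0^t(\alpha R_\alpha u-u)(X_r)\,dr$ with bounded drift, available because $R_\alpha\mathcal{A}\subseteq\mathcal{A}$ by (i). Integrating $N_t\,R_\alpha u(X_t)$ by parts and using $\langle N,M^{\alpha,u}\rangle=0$ to remove the covariation term yields the moment identity $\mathbb{E}_\lambda[N_t R_\alpha u(X_t)]=\int_0^t\mathbb{E}_\lambda[N_r(\alpha R_\alpha u-u)(X_r)]\,dr$. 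Applying the same identity in the shifted filtration to the increments $N_{s+\cdot}-N_s$ (legitimate since $M^{\alpha,u}$ is an additive functional, so its increments transform under $\theta_s$) and testing against bounded $\mathcal{F}_s$-measurable factors, I would run an induction over products $\prod_i u_i(X_{t_i})$, $u_i\in\mathcal{A}$, passing from resolvent images back to arbitrary elements of $\mathcal{A}$ via $\alpha R_\alpha\to\mathrm{id}$. Since $\sigma(\mathcal{A})=\mathcal{B}(S)$, a monotone-class argument shows that such products generate $\mathcal{F}_\infty^\lambda$, forcing $N_\infty=0$ and hence $N\equiv0$. Turning strong orthogonality into this $L^2$-generation statement through the Markov property is where the real work lies, and is the step I expect to be hardest.

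Finally, (iii) gives uniqueness of the integrands. If $\sum_j\int_0^{\cdot}f^j\,dW^j=\sum_j\int_0^{\cdot}\tilde f^j\,dW^j$, set $h^j=f^j-\tilde f^j$ and take covariation with each $W^i$ to obtain $\sum_j\int_0^t h^j\,d\langle W^i,W^j\rangle=0$ for all $i$ and $t$. Writing the Kunita--Watanabe densities $d\langle W^i,W^j\rangle=a_{ij}\,dA$ against $A=\sum_k\langle W^k\rangle$, this reads $\sum_j a_{ij}h^j=0$ for $dA$-a.e., and the matrix $(a_{ij})$ is $dA$-a.e. strictly positive definite by (iii); inverting it forces $h^j=0$ $dA$-a.e., which is the asserted uniqueness.
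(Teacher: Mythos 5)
The first thing to note is that the paper contains no proof of this theorem: it is imported verbatim from \citep[Theorem 3]{QY12}, and the paper's own work consists only of verifying hypotheses (i)--(iii) for the gasket Brownian motion so as to obtain Theorem \ref{thm:-2}. So your attempt must be measured against the argument of that source. Your architecture is in fact the same as the one used there: first continuity of all local martingales; then the stable-subspace reduction, in which one shows that a square-integrable martingale $N$ with $N_{0}=0$ strongly orthogonal to every $M^{\alpha,u}$ must vanish, using the resolvent (Dynkin) identity $\mathbb{E}_{\lambda}\big[N_{t}R_{\alpha}u(X_{t})\big]=\int_{0}^{t}\mathbb{E}_{\lambda}\big[N_{r}(\alpha R_{\alpha}u-u)(X_{r})\big]\,dr$ (which you derive correctly), the Markov property, an induction over cylinder functions $\prod_{i}u_{i}(X_{t_{i}})$, and a monotone-class argument from $\sigma(\mathcal{A})=\mathcal{B}(S)$.

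As a proof, however, your text has genuine gaps, one of which you concede. (a) The induction converting strong orthogonality into $\mathbb{E}_{\lambda}\big[N_{\infty}\prod_{i}u_{i}(X_{t_{i}})\big]=0$ is the heart of the theorem, and you describe it without carrying it out; in particular you still need the conditional form of your identity after shifting by $\theta_{s}$, and the passage from $R_{\alpha}u$ back to $u\in\mathcal{A}$ is delicate because $u$ is only bounded measurable, so $\alpha R_{\alpha}u(X_{t})\to u(X_{t})$ is not automatic (one argues through Laplace transforms in the time variables or through fine continuity). (b) In the continuity part, ``there is no jumping measure to compensate'' is not an argument; the fact actually needed -- and proved in \citep{QY12} -- is that for the augmented natural filtration of a continuous Hunt process every stopping time is predictable (there are no totally inaccessible times), after which quasi-left-continuity of the filtration kills jumps at predictable times. (c) Your uniqueness step inverts the density matrix $(a_{ij})$, so it requires $(a_{ij})$ to be strictly positive definite $dA\otimes d\mathbb{P}_{\lambda}$-a.e.; this is \emph{not} implied by (iii) as stated, which concerns the integrated matrix $\big(\langle W^{i},W^{j}\rangle_{t}\big)_{i,j}$. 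For instance, on a one-dimensional Brownian filtration take $W^{1}_{t}=B_{t}$ and $W^{2}_{t}=\int_{0}^{t}s\,dB_{s}$: the matrix $\big(\langle W^{i},W^{j}\rangle_{t}\big)_{i,j}$ is strictly positive definite for every $t>0$ (Cauchy--Schwarz is strict since $1$ and $s$ are not proportional on $[0,t]$), yet the density matrix is singular at every time and integrand uniqueness genuinely fails, e.g. $\int_{0}^{\cdot}f\,dW^{1}=\int_{0}^{\cdot}\big(f(s)-s\big)\,dW^{1}+\int_{0}^{\cdot}1\,dW^{2}$. So your final step silently strengthens (iii) to an almost-everywhere statement about increments; that stronger reading is what the representation theorem actually requires, and the looseness lies in the quoted formulation rather than in your algebra, but a self-contained proof must say so.
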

We now return to the setting of Brownian motion on the Sierpinski
gasket. Since $\mathcal{F}(\mathbb{S})\subseteq C(\mathbb{S})$, we
that $\mathcal{F}(\mathbb{S})$ is an algebra. Let $\lambda\in\mathcal{P}(\mathbb{S})$.
The assumptions (i) and (iii) in Theorem \ref{thm:-1} are clearly
satisfied with $\mathcal{A}=\mathcal{F}(\mathbb{S})$. The assumption
(ii) is also satisfied in view of Theorem \ref{thm:-}. Therefore,
we have the following representation theorem for square-integrable
martingales adapted to filtrations induced by the Brownian motion.
\begin{thm}
\label{thm:-2}Let $\lambda\in\mathcal{P}(\mathbb{S})$. For any square-integrable
martingale $M$ on $(\Omega,\{\mathcal{F}_{t}^{\lambda}\},\mathbb{P}_{\lambda})$,
there exists an $\{\mathcal{F}_{t}^{\lambda}\}$-predictable process
$f$ such that 
\[
M_{t}=M_{0}+\int_{0}^{t}f(r)dW_{r},\;\;t\ge0\quad\mathbb{P}_{\lambda}\mathit{-a.s.}
\]
The process $f$ is unique; that is, if $\bar{f}$ is another $\{\mathcal{F}_{t}^{\lambda}\}$-predictable
process satisfying the above, then $\mathbb{E}_{\lambda}\big[\int_{0}^{\infty}(f(r)-\bar{f}(r))^{2}d\langle W\rangle_{r}\big]=0$.
\end{thm}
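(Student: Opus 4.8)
The plan is to deduce Theorem \ref{thm:-2} as the one-dimensional specialization of the general martingale representation Theorem \ref{thm:-1}, with the Brownian martingale $W_t$ of Definition \ref{def:-3} playing the role of the single representing martingale (so $d=1$, $W^1 = W$). The work consists almost entirely of verifying that hypotheses (i)--(iii) of Theorem \ref{thm:-1} hold in the present setting, after which the conclusion is immediate. First I would take $\mathcal{A} = \mathcal{F}(\mathbb{S})$ as the candidate sub-algebra of $L^2(\mathbb{S};\lambda)\cap\mathcal{B}_b(\mathbb{S})$. Since $\mathcal{F}(\mathbb{S})\subseteq C(\mathbb{S})$ by \eqref{eq:-40}, every element of $\mathcal{F}(\mathbb{S})$ is bounded on the compact set $\mathbb{S}$, so $\mathcal{A}\subseteq\mathcal{B}_b(\mathbb{S})$; and $\mathcal{F}(\mathbb{S})$ is closed under products because for $u,v\in\mathcal{F}(\mathbb{S})$ one has $\nabla(uv)=\nabla u\cdot v + u\cdot\nabla v$ (noted just after Definition \ref{def:-2-1}), so $uv$ again has finite energy. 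Thus $\mathcal{A}$ is genuinely a sub-algebra.

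Next I would check condition (i). The claim $\sigma(\mathcal{F}(\mathbb{S}))=\mathcal{B}(\mathbb{S})$ follows since $\mathcal{F}(\mathbb{S})$ contains enough continuous functions to separate points of $\mathbb{S}$ (the harmonic functions $Hu$ already do this), and continuous functions separating points generate the Borel $\sigma$-algebra on a compact metric space. The resolvent invariance $R_\alpha(\mathcal{F}(\mathbb{S}))\subseteq\mathcal{F}(\mathbb{S})$ is a standard property of the Dirichlet form: for the self-adjoint operator $\mathcal{L}$ associated with $(\mathcal{E},\mathcal{F}(\mathbb{S}))$, one has $\mathrm{Dom}(\mathcal{L})\subseteq\mathcal{F}(\mathbb{S})$ and $R_\alpha = (\alpha-\mathcal{L})^{-1}$ maps $L^2(\mathbb{S};\mu)$ into $\mathrm{Dom}(\mathcal{L})\subseteq\mathcal{F}(\mathbb{S})$, so $R_\alpha f\in\mathcal{F}(\mathbb{S})$ for $f\in\mathcal{F}(\mathbb{S})$. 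Condition (ii) is exactly the content of Theorem \ref{thm:-} applied to $u=R_\alpha f$: since $R_\alpha f\in\mathcal{F}(\mathbb{S})$, the martingale part of $R_\alpha f(X_t)-R_\alpha f(X_0)$ is representable as $\int_0^t \zeta(X_r)\,dW_r$ for a unique $\zeta\in L^2(\mathbb{S};\nu)$, giving the required predictable integrand against the single martingale $W$.

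The remaining point is condition (iii), the strict positive-definiteness of the $1\times 1$ matrix $\langle W\rangle_t$, i.e.\ that $\langle W\rangle_t > 0$ for all $t>0$ almost surely. Here I would use that $W_t$ has energy measure $\nu_{\langle W\rangle}=\nu$ (Theorem \ref{thm:-}(i)) and that $\nu$ is a fully-supported probability measure on $\mathbb{S}$; combined with the fact that the Brownian motion on $\mathbb{S}$ is genuinely diffusive (it does not get stuck, so its occupation charges every nonempty open set in positive time), the quadratic variation $\langle W\rangle_t$, whose Revuz measure is $\nu$, is strictly increasing and hence strictly positive for $t>0$. This verification is the one place requiring genuine care, and I expect it to be the main obstacle: one must rule out degeneracy of the single scalar martingale, which in the multi-dimensional formulation of Theorem \ref{thm:-1} is the nondegeneracy of the covariation matrix. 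With (i)--(iii) in hand, Theorem \ref{thm:-1} yields, for any square-integrable martingale $M$ on $(\Omega,\{\mathcal{F}_t^\lambda\},\mathbb{P}_\lambda)$, a unique $\{\mathcal{F}_t^\lambda\}$-predictable $f$ with $M_t = M_0 + \int_0^t f(r)\,dW_r$, and the stated uniqueness assertion $\mathbb{E}_\lambda\big[\int_0^\infty (f-\bar f)^2\,d\langle W\rangle_r\big]=0$ is precisely the uniqueness clause of Theorem \ref{thm:-1} written out via the It\^o isometry for $W$. This completes the proof.
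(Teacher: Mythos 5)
Your proposal is correct and follows essentially the same route as the paper: the paper's proof consists precisely of invoking Theorem \ref{thm:-1} with $d=1$, $W^{1}=W$ and $\mathcal{A}=\mathcal{F}(\mathbb{S})$, noting that conditions (i) and (iii) are clearly satisfied and that condition (ii) follows from Theorem \ref{thm:-}. The only difference is that you spell out the verifications the paper labels as ``clear'' (the algebra property, point separation, resolvent invariance, and non-degeneracy of $\langle W\rangle$), which is fine.
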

We end this subsection by showing the uniqueness of decompositions
of semi-martingales $Y_{t}$ of the form
\begin{equation}
Y_{t}=Y_{0}+\int_{0}^{t}g(r)dr+\int_{0}^{t}f(r)d\langle W\rangle_{r}+M_{t},\;\;t\ge0,\label{eq:-14}
\end{equation}
 where $M$ is a martingale on $(\Omega,\{\mathcal{F}_{t}^{\mu}\},\mathbb{P}_{\mu})$.
\begin{lem}
\label{lem:-33}The Lebesgue-Stieltjes measure $d\langle W\rangle_{t}$
is singular to $dt\;\;\mathbb{P}_{\mu}$-a.s.
\end{lem}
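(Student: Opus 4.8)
The plan is to reduce the pathwise mutual singularity to the mutual singularity of the associated Revuz measures, via the Revuz correspondence of the theory of Dirichlet forms (\citep[Chapter 5]{FOT10}). The positive continuous additive functional (PCAF) $\langle W\rangle_t$ has Revuz measure equal to its energy measure $\nu_{\langle W\rangle}$, which by Theorem \ref{thm:-}(i) is the Kusuoka measure $\nu$. On the other hand, the PCAF $A_t=t$ has Revuz measure $\mu$: indeed, since $\mu$ is the symmetrizing (hence invariant) measure, $X_r$ has law $\mu$ under $\mathbb{P}_\mu$ for every $r\ge0$, so $\frac1t\mathbb{E}_\mu\big(\int_0^t\phi(X_r)\,dr\big)=\int_{\mathbb{S}}\phi\,d\mu$ for all $t>0$ and all $\phi\in\mathcal{B}_+(\mathbb{S})$. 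By Remark \ref{rem:-6}, $\mu\perp\nu$.

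Since $\mu\perp\nu$, I would fix a Borel set $B\subseteq\mathbb{S}$ with $\nu(B)=0$ and $\mu(\mathbb{S}\setminus B)=0$, and then form the two truncated PCAFs $t\mapsto\int_0^t 1_{\mathbb{S}\setminus B}(X_r)\,dr$ and $t\mapsto\int_0^t 1_B(X_r)\,d\langle W\rangle_r$. By the multiplicativity of the Revuz correspondence, their Revuz measures are $1_{\mathbb{S}\setminus B}\cdot\mu=0$ and $1_B\cdot\nu=0$ respectively. Since a PCAF is uniquely determined by its Revuz measure, and one with null Revuz measure vanishes identically (\citep[Theorem 5.1.3]{FOT10}), both additive functionals are zero for all $t$, $\mathbb{P}_\mu$-a.s. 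Consequently, for $\mathbb{P}_\mu$-a.e. $\omega$, the Lebesgue measure $dt$ is carried by the time-set $S_1(\omega)=\{r\ge0:X_r(\omega)\in B\}$, while $d\langle W\rangle_t$ is carried by $S_2(\omega)=\{r\ge0:X_r(\omega)\in\mathbb{S}\setminus B\}$. As $S_1(\omega)$ and $S_2(\omega)$ are disjoint subsets of $[0,\infty)$, the measures $dt$ and $d\langle W\rangle_t$ are mutually singular, which is the assertion.

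The main technical point to verify carefully is the identification of the Revuz measure of $\langle W\rangle$ with the energy measure $\nu$, together with the injectivity of the Revuz correspondence used to annihilate the truncated functionals; both are standard consequences of the smooth-measure / PCAF theory in \citep[Chapter 5]{FOT10} applied to the regular Dirichlet form $(\mathcal{E},\mathcal{F}(\mathbb{S}))$, but one should confirm that $\langle W\rangle$ and the indicator-weighted functionals genuinely belong to the PCAF class and that no exceptional set spoils the argument. Here the latter is automatic, because, as noted after (\ref{eq:-40}), the only set of zero capacity is $\emptyset$, so there are no nontrivial polar sets to track and the $\mathbb{P}_\mu$-a.s. statements carry through cleanly. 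I would also remark that working under $\mathbb{P}_\mu$ is the natural setting, since the Revuz measures are defined through $\mathbb{E}_\mu$ and the conclusion is asserted only $\mathbb{P}_\mu$-a.s., so no extension to general initial distributions is needed.
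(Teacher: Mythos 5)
Your proof is correct, and while it rests on the same two pillars as the paper's --- the singularity $\mu\perp\nu$ of Remark \ref{rem:-6} and the identification of the Revuz measures of the two clocks ($dt\leftrightarrow\mu$ and $d\langle W\rangle_t\leftrightarrow\nu$, the latter from Theorem \ref{thm:-}) --- the mechanism converting these into the pathwise statement is genuinely different. The paper introduces the Dol\'eans measure $Q$ of $\langle W\rangle$ on the predictable $\sigma$-field, Lebesgue-decomposes it as $Q=f\cdot(dt\times\mathbb{P}_\mu)+Q_s$, and shows $f=0$ using the identity (\ref{eq:-205}) together with a separating set $B$; the final passage from singularity of $Q$ and $dt\times\mathbb{P}_{\mu}$ on the predictable $\sigma$-field to $\mathbb{P}_\mu$-a.s. singularity of the pathwise measures is left implicit there. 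You bypass the predictable $\sigma$-field altogether: the truncated functionals $\int_0^t 1_{\mathbb{S}\setminus B}(X_r)\,dr$ and $\int_0^t 1_B(X_r)\,d\langle W\rangle_r$ have zero Revuz measures, hence vanish identically by the injectivity of the smooth-measure/PCAF correspondence, and this hands you explicit disjoint carriers $S_1(\omega)$ and $S_2(\omega)$ of $dt$ and $d\langle W\rangle_t$, making the pathwise conclusion immediate. Your route buys directness (no transfer step is needed, and you correctly note that the absence of nonempty polar sets turns all q.e. statements into everywhere statements); its cost is an appeal to the full PCAF--smooth-measure bijection of \citep[Chapter 5]{FOT10}, whereas the paper needs only the elementary monotone-class identity (\ref{eq:-205}) and the Revuz formula under $\mathbb{E}_\mu$. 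One citation detail: the uniqueness statement you invoke (a PCAF with null Revuz measure is evanescent) is the content of the one-to-one correspondence between smooth measures and equivalence classes of PCAFs, usually cited as \citep[Theorem 5.1.4]{FOT10} rather than Theorem 5.1.3, so the theorem number is worth double-checking.
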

\begin{proof}
Let $\mathcal{P}$ be the $\sigma$-filed of predictable sets in $[0,\infty)\times\Omega$.
Let $Q$ be the unique measure on $([0,\infty)\times\Omega,\mathcal{P})$
satisfying $Q\big(\llbracket\sigma,\tau\rrparenthesis\big)=\mathbb{E}_{\mu}\big(\langle W\rangle_{\tau}-\langle W\rangle_{\sigma}\big),\;\sigma,\tau\in\mathcal{T}_{p}$,
where $\mathcal{T}_{p}$ is the family of all $\{\mathcal{F}_{t}^{\mu}\}$-predictable
times, and $\llbracket\sigma,\tau\rrparenthesis=\{(t,\omega)\in[0,\infty)\times\Omega:\sigma(\omega)\le t<\tau(\omega)\}$.
By the Lebesgue decomposition, $Q=f\cdot(dt\times\mathbb{P}_{\mu})+Q_{s}$,
where $f\ge0$ is a predictable process and is $\sigma$-integrable
with respect to $dt\times\mathbb{P}_{\mu}$, and $Q_{s}$ is a $\sigma$-finite
positive measure singular to $dt\times\mathbb{P}_{\mu}$.

Note that, for any non-negative predictable process $g$,
\begin{equation}
\int_{[0,T)\times\Omega}g(r)Q(dr,d\omega)=\mathbb{E}_{\mu}\Big(\int_{0}^{T}g(r)d\langle W\rangle_{r}\Big),\label{eq:-205}
\end{equation}
which can be easily shown by the definition of $Q$ and a standard
monotone class argument. Now, let $B\in\mathcal{B}(\mathbb{S})$ such
that $\mu(B)=1=\nu(\mathbb{S}\backslash B)=1$. By $\mathbb{E}_{\mu}\big(\int_{0}^{T}1_{\mathbb{S}\backslash B}(X_{r})dr\big)=T\mu(\mathbb{S}\backslash B)=0$,
we see that $1_{B}(X_{t})=1,\;\mbox{a.e.}\;t\ge0,\;\mathbb{P}_{\mu}\mbox{-a.s.}$
Therefore, by (\ref{eq:-205}),
\[
\begin{aligned}\int_{0}^{T}\mathbb{E}_{\mu}(f(t))dt & =\int_{0}^{T}\mathbb{E}_{\mu}(f(t)1_{B}(X_{r}))dt\le\int_{[0,T)\times\Omega}1_{B}(X_{t}(\omega))Q(dt,d\omega)\\
 & =\mathbb{E}_{\mu}\Big(\int_{0}^{T}1_{B}(X_{t})d\langle W\rangle_{t}\Big)=T\nu(B)=0,
\end{aligned}
\]
which implies the conclusion of the lemma.
\end{proof}
\begin{cor}
Let $Y$ be a semi-martingale on $(\Omega,\{\mathcal{F}_{t}^{\mu}\},\mathbb{P}_{\mu})$
of the form (\ref{eq:-14}). Then the decomposition (\ref{eq:-14})
is unique; that is, if (\ref{eq:-14}) also holds with $g,f,M$ replaced
by $\bar{g},\bar{f},\bar{M}$, then $\mathbb{E}_{\mu}\big(\int_{0}^{\infty}|g(r)-\bar{g}(r)|dr\big)=\mathbb{E}_{\mu}\big(\int_{0}^{\infty}|f(r)-\bar{f}(r)|d\langle W\rangle_{r}\big)=0$.
\end{cor}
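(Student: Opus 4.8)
The plan is to subtract the two decompositions and then exploit the mutual singularity of $dt$ and $d\langle W\rangle_{t}$ provided by Lemma \ref{lem:-33}. Write $\delta g=g-\bar{g}$, $\delta f=f-\bar{f}$, and $N=M-\bar{M}$. Evaluating (\ref{eq:-14}) at $t=0$ shows $M_{0}=\bar{M}_{0}=0$, so subtracting the two instances of (\ref{eq:-14}) gives
\[
N_{t}=-\int_{0}^{t}\delta g(r)\,dr-\int_{0}^{t}\delta f(r)\,d\langle W\rangle_{r},\quad t\ge0,\;\;\mathbb{P}_{\mu}\text{-a.s.}
\]
The right-hand side is a continuous process of finite variation, since $dr$ and $d\langle W\rangle_{r}$ are (non-negative) Radon measures and the integrands are locally integrable by the well-definedness of (\ref{eq:-14}). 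On the other hand, $N=M-\bar{M}$ is a martingale on $(\Omega,\{\mathcal{F}_{t}^{\mu}\},\mathbb{P}_{\mu})$, hence continuous by Theorem \ref{thm:-1}. A continuous local martingale of finite variation starting at zero is identically zero, so $N_{t}=0$ for all $t\ge0$ $\mathbb{P}_{\mu}$-a.s., and therefore
\[
\int_{0}^{t}\delta g(r)\,dr+\int_{0}^{t}\delta f(r)\,d\langle W\rangle_{r}=0\quad\text{for all }t\ge0,\;\;\mathbb{P}_{\mu}\text{-a.s.}
\]

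Next I would reinterpret this pathwise as a statement about measures on $[0,\infty)$. For $\mathbb{P}_{\mu}$-a.e.\ fixed $\omega$, the signed measure $\delta g(\cdot,\omega)\,dt+\delta f(\cdot,\omega)\,d\langle W\rangle(\cdot,\omega)$ has vanishing distribution function, hence is the zero measure. By Lemma \ref{lem:-33}, $dt$ and $d\langle W\rangle(\cdot,\omega)$ are mutually singular for $\mathbb{P}_{\mu}$-a.e.\ $\omega$, so there is a Borel set $A_{\omega}\subseteq[0,\infty)$ carrying all of $dt$ and none of $d\langle W\rangle$. Restricting the zero measure to $A_{\omega}$ annihilates the $d\langle W\rangle$-term and leaves $\int_{B}\delta g(r,\omega)\,dr=\int_{B\cap A_{\omega}}\delta g(r,\omega)\,dr=0$ for every Borel $B$; hence $\delta g(\cdot,\omega)=0$ for a.e.\ $t$. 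Restricting instead to $A_{\omega}^{c}$ yields $\delta f(\cdot,\omega)=0$ $d\langle W\rangle(\cdot,\omega)$-a.e.\ by the same argument.

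Finally, integrating these two pointwise (in $\omega$) conclusions against $\mathbb{P}_{\mu}$ gives $\mathbb{E}_{\mu}\big(\int_{0}^{\infty}|\delta g(r)|\,dr\big)=0$ and $\mathbb{E}_{\mu}\big(\int_{0}^{\infty}|\delta f(r)|\,d\langle W\rangle_{r}\big)=0$, which is precisely the claimed uniqueness. The only genuinely non-routine step is the separation of the two drift terms in the second paragraph, and this is exactly what Lemma \ref{lem:-33} is designed to supply; the remaining steps (the continuous-finite-variation-martingale argument and the passage from the pathwise identities to expectations) are standard.
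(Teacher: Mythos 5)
Your proof is correct and is exactly the argument the paper intends: the corollary is stated without proof as an immediate consequence of Lemma \ref{lem:-33}, and your steps (continuity of the martingale difference via Theorem \ref{thm:-1}, vanishing of a continuous finite-variation martingale, then pathwise separation of the two drift measures using the $\mathbb{P}_{\mu}$-a.s.\ mutual singularity of $dt$ and $d\langle W\rangle_{t}$) fill in precisely that intended reasoning.
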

We shall need the following time-dependent Itô-Fukushima decomposition
(see also \citep{LZ90,FPS95} for similar results in different settings),
which follows from Theorem \ref{thm:-2} and the decomposition theorem
in \citep[Theorem 4.5]{Tru00} applied to the (non-symmetric) \emph{generalized
Dirichlet form} $(u,v)\mapsto\Lambda(u,v)+\int_{0}^{T}\mathcal{E}(u(t),v(t))\,dt$,
where
\[
\Lambda(u,v)=\left\{ \begin{aligned}\int_{0}^{T}\langle u(t),\partial_{t}v(t)\rangle_{\mu}\,dt,\;\;\text{if}\ u\in L^{2}(0,T;\mathcal{F}(\mathbb{S})),\,v\in L^{2}(0,T;\mathcal{F}(\mathbb{S}))\cap C^{1}(0,T;L^{2}(\mu)),\\
-\int_{0}^{T}\langle\partial_{t}u(t),v(t)\rangle_{\mu}\,dt,\;\;\text{if}\ u\in L^{2}(0,T;\mathcal{F}(\mathbb{S}))\cap C^{1}(0,T;L^{2}(\mu)),\,v\in L^{2}(0,T;\mathcal{F}(\mathbb{S})).
\end{aligned}
\right.
\]
(See, for example, \citep{Sta99,Tru00} and etc\emph{.}, for the theory
of generalized Dirichlet forms.)
\begin{lem}
\label{lem:}Suppose $u\in C([0,T)\times\mathbb{S})\cap L^{2}(0,T;\mathcal{F}(\mathbb{S}))$
and that $u$ has weak derivative $\partial_{t}u$ in $L^{2}(0,T;\mathcal{F}^{-1}(\mathbb{S}))$.
Then
\[
u(t,X_{t})=u(0,X_{0})+\int_{0}^{t}\nabla u(r,X_{r})\,dW_{r}+N_{t},\;\;t\in[0,T],
\]
where $N_{t}$ is a continuous processes with zero quadratic variation;
that is, for each $t\ge0$,
\[
\lim_{n\to\infty}\mathbb{E}_{\mu}\Big[\sum_{i=1}^{n}(N_{t_{i}}-N_{t_{i-1}})^{2}\Big]=0,
\]
where $t_{i}=\frac{i}{n}t,\;i=0,1,\dots,n$.
\end{lem}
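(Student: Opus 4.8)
The plan is to obtain the decomposition as a concrete realization of the Fukushima-type decomposition for the space-time process, and then to identify its martingale part explicitly by means of the representation Theorem~\ref{thm:-2}. The hypotheses imposed on $u$ --- namely $u\in L^{2}(0,T;\mathcal{F}(\mathbb{S}))$ with weak derivative $\partial_{t}u\in L^{2}(0,T;\mathcal{F}^{-1}(\mathbb{S}))$, together with continuity in $(t,x)$ --- are exactly what is required to place $u$ in the domain of the generalized Dirichlet form $(u,v)\mapsto\Lambda(u,v)+\int_{0}^{T}\mathcal{E}(u(t),v(t))\,dt$ recorded immediately above the statement. First I would apply the decomposition theorem \citep[Theorem 4.5]{Tru00} to the additive functional $t\mapsto u(t,X_{t})-u(0,X_{0})$, producing
\[
u(t,X_{t})-u(0,X_{0})=M_{t}+N_{t},\qquad t\in[0,T],
\]
in which $M$ is a martingale additive functional of finite energy and $N$ is a continuous additive functional of zero energy. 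Checking applicability reduces to confirming that $u$ lies in the form domain and that the usual quasi-continuity requirements hold; because $\mathcal{F}(\mathbb{S})\subseteq C(\mathbb{S})$ and the only set of zero capacity is $\emptyset$, the exceptional-set subtleties that normally accompany such decompositions are absent here, and the decomposition is valid under $\mathbb{P}_{\mu}$.

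The crux is the identification of $M$. By localization $M$ is locally square-integrable, so Theorem~\ref{thm:-2} provides a unique predictable $f$ with $M_{t}=\int_{0}^{t}f(r)\,dW_{r}$, and it remains to show $f(r)=\nabla u(r,X_{r})$. I would test against the martingale part $M^{[h]}=\int_{0}^{\cdot}\nabla h(X_{r})\,dW_{r}$ of the harmonic function $h$ of Definition~\ref{def:-3}, whose gradient satisfies $\nabla h<0$ $\nu$-a.e. On the one hand the two stochastic-integral representations yield $\langle M,M^{[h]}\rangle_{t}=\int_{0}^{t}f(r)\,\nabla h(X_{r})\,d\langle W\rangle_{r}$. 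On the other hand, since the time-derivative form $\Lambda$ is antisymmetric and therefore affects only the zero-energy part, the bracket of the martingale part is governed by the spatial form alone; the energy-measure computation then identifies the same bracket through the spatial mutual energy measure, which by the consequences of Definition~\ref{def:-2-1} equals $\nu_{\langle u(r),h\rangle}=\nabla u(r)\,\nabla h\cdot\nu$. Recalling that $\langle W\rangle$ has Revuz measure $\nu$ (Theorem~\ref{thm:-}\,(i)), this gives $\langle M,M^{[h]}\rangle_{t}=\int_{0}^{t}\nabla u(r,X_{r})\,\nabla h(X_{r})\,d\langle W\rangle_{r}$. Equating the two expressions and dividing by $\nabla h(X_{r})\neq0$ forces $f(r)=\nabla u(r,X_{r})$ for $d\langle W\rangle_{r}$-a.e.\ $r$, $\mathbb{P}_{\mu}$-a.s. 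This reconciliation of the abstract energy-measure description of the martingale part with its explicit $W$-integral is the step I expect to be the main obstacle, mainly because one must track the time-dependence of the slice energy measure $\nu_{\langle u(r)\rangle}$ and justify the covariation computation along the trajectory.

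It then remains to translate the zero-energy property of $N$ into the stated vanishing of the quadratic variation. For a continuous additive functional of zero energy, the expected sum of squared increments over the uniform partition $t_{i}=\tfrac{i}{n}t$ tends to $0$ as $n\to\infty$; this is the standard passage from zero energy to zero quadratic variation, entirely parallel to the symmetric case, and together with the identification of $M$ it completes the proof.
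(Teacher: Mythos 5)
Your proposal follows essentially the same route as the paper, which justifies the lemma precisely by applying the decomposition theorem of \citep[Theorem 4.5]{Tru00} to the generalized Dirichlet form $(u,v)\mapsto\Lambda(u,v)+\int_{0}^{T}\mathcal{E}(u(t),v(t))\,dt$ and then invoking the martingale representation of Theorem~\ref{thm:-2}. The only difference is that you spell out the identification of the integrand as $\nabla u(r,X_{r})$ (via the bracket against $M^{[h]}$ with $\nabla h\neq0$ $\nu$-a.e.) and the passage from zero energy to zero quadratic variation, both of which the paper leaves implicit in its citation; these details are correct.
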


\subsection{\label{subsec:-5}Exponential integrability of quadratic processes}

We now turn to the main technical result of this section, the exponential
integrability of the quadratic process $\left\langle W\right\rangle $,
which is a sufficient condition for the Girsanov theorem to hold and
is crucial for the existence of solutions of BSDEs. We shall need
the following heat kernel estimates. (See \citep[Theorem 5.3.1]{Ki01}.)
\begin{lem}
\label{lem:-29}(a) $\{X_{t}\}$ admits (jointly) continuous transition
kernels $p_{t}(x,y),\;t>0$, and there exists universal constants
$C_{\ast,1},C_{\ast,2}>0$ such that
\[
C_{\ast,1}\,t^{-d_{s}/2}\le p_{t}(x,y)\le C_{\ast,2}\,t^{-d_{s}/2},\;\;t\in(0,1],\;x,y\in\mathbb{S},
\]
where $d_{s}=2\log3/\log5$ is the \emph{spectral dimension} of $\{X_{t}\}$.

(b) $\{X_{t}^{0}\}$ admits (jointly) continuous transition kernels
$p_{t}^{0}(x,y),\;t>0$, and there are universal constants $C_{\ast,3},C_{\ast,4}>0$
such that
\[
C_{\ast,3}\,t^{-d_{s}/2}\le p_{t}^{0}(x,y)\le C_{\ast,4}\,t^{-d_{s}/2},\;\;t\in(0,1],\;x,y\in\mathbb{S}\backslash\mathrm{V}_{0}.
\]
\end{lem}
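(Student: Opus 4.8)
The plan is to read off both estimates from the Barlow--Perkins heat kernel bounds for the gasket, recorded as \citep[Theorem 5.3.1]{Ki01}, and to exhibit only the scaling mechanism that fixes the exponent $d_s/2$. That exponent is forced by the two scaling relations already in hand in Section \ref{subsec:}: the mass scaling $\mu(F_{[\omega]_m}(\mathbb{S}))=3^{-m}$ and the energy scaling \eqref{eq:-30}. Together they assign to a level-$m$ cell the spatial scale $2^{-m}$, mass $3^{-m}$ and time scale $5^{-m}$ (so that the walk dimension is $d_w=\log 5/\log 2$). Iterating \eqref{eq:-30}, at time $t\sim 5^{-m}$ the heat kernel concentrates on a single $m$-cell of mass $3^{-m}$, giving $p_t(x,x)\sim 3^{m}=(5^{-m})^{-\log 3/\log 5}=t^{-d_s/2}$, in agreement with $d_s/2=\log 3/\log 5$.

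For part (a), I would get the on-diagonal upper bound from a Nash inequality of the form $\|u\|_{L^2(\mu)}^{2+4/d_s}\le C\,\mathcal{E}(u,u)\,\|u\|_{L^1(\mu)}^{4/d_s}$, which one derives by feeding the oscillation/Sobolev estimate \eqref{eq:-40} into the self-similar decomposition \eqref{eq:-30} cell by cell. The classical Nash--ultracontractivity equivalence then yields $\|P_t\|_{L^1(\mu)\to L^\infty(\mu)}\le C\,t^{-d_s/2}$ for $t\in(0,1]$, hence the upper bound for the jointly continuous density. The on-diagonal lower bound $p_t(x,x)\ge c\,t^{-d_s/2}$ I would get from the semigroup identity $p_{2t}(x,x)=\int p_t(x,z)^2\,\mu(dz)\ge \mathbb{P}_x(X_t\in B)^2/\mu(B)$ for a ball $B=B(x,A\,t^{1/d_w})$: conservativeness of $\{X_t\}$ together with the upper bound just proved (used to bound the exit probability) gives $\mathbb{P}_x(X_t\in B)\ge \tfrac12$ once $A$ is large, while $\mu(B)\le C\,t^{d_s/2}$.

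Part (b) I would reduce to (a). Since killing only lowers the density, $p_t^0(x,y)\le p_t(x,y)\le C_{\ast,4}\,t^{-d_s/2}$ is immediate. For the lower bound, the strong Markov property at the hitting time $\sigma_{\mathrm{V}_0}$ gives $p_t^0(x,y)=p_t(x,y)-\mathbb{E}_x\big[1_{\{\sigma_{\mathrm{V}_0}<t\}}\,p_{t-\sigma_{\mathrm{V}_0}}(X_{\sigma_{\mathrm{V}_0}},y)\big]$, and I would bound the correction term using the upper bound of (a) and a small-time estimate for the probability of hitting the three-point set $\mathrm{V}_0$, which is unlikely from a typical interior point on the relevant time scale.

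The main obstacle is the lower bound away from the diagonal. The genuine lower estimate on $\mathbb{S}$ is sub-Gaussian, and passing from the on-diagonal bound above to the stated bound requires a chaining argument along a string of overlapping cells, each step invoking the scaling relation and the elliptic Harnack inequality for $\{X_t\}$; handling, in addition, the boundary set $\mathrm{V}_0$ in part (b) is where the real work sits, and it is exactly this that is packaged into \citep[Theorem 5.3.1]{Ki01}.
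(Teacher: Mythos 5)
The paper does not prove this lemma at all: it is imported verbatim from the literature, with the one\mbox{-}line attribution ``See \citep[Theorem 5.3.1]{Ki01}.'' Since your proposal also ultimately rests on that same citation (your opening sentence and your closing sentence both defer the real work to it), your core approach coincides with the paper's, and the scaling heuristic $\mu$-mass $3^{-m}$, time scale $5^{-m}$, hence $p_t(x,x)\sim t^{-\log 3/\log 5}=t^{-d_s/2}$, is a correct and useful gloss. The upper-bound portion of your sketch is also the standard route: a Nash inequality gives ultracontractivity $\Vert P_t\Vert_{L^1(\mu)\to L^\infty(\mu)}\le C t^{-d_s/2}$, and $p_t^0\le p_t$ settles the upper bound in (b).

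The lower-bound portion of your sketch, however, cannot be completed as described, and the obstruction is worth naming precisely: the uniform lower bound $p_t(x,y)\ge C_{\ast,1}t^{-d_s/2}$ for \emph{all} $x,y\in\mathbb{S}$ and $t\in(0,1]$ is not obtainable from chaining plus Harnack, because (as you yourself observe) the genuine lower estimate is sub-Gaussian: for $x,y$ at distance of order one, $p_t(x,y)\to 0$ as $t\to 0$ while $t^{-d_s/2}\to\infty$, so no argument can close this gap --- the inequality can only hold on or near the diagonal, $d(x,y)\lesssim t^{1/d_w}$, or with the sub-Gaussian correction factor retained. The situation in (b) is worse: since $\mathcal{F}(\mathbb{S})\subseteq C(\mathbb{S})$, the points of $\mathrm{V}_0$ have positive capacity, the killed kernel satisfies $p_t^0(x,y)\to 0$ as $x\to\mathrm{V}_0$, and hence no uniform positive lower bound over $\mathbb{S}\backslash\mathrm{V}_0$ can hold; your hedge ``from a typical interior point'' is exactly where this fails, since the estimate is asserted for every $x\in\mathbb{S}\backslash\mathrm{V}_0$. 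In other words, the defect lies in the lemma's statement (the lower bounds are stated too strongly), not in any step you could have repaired. Note that the paper only ever invokes the \emph{upper} bounds of this lemma --- in (\ref{eq:-199}) inside the proof of Lemma \ref{lem:-30}, and in (\ref{eq:-31}) inside Step 2 of Theorem \ref{thm:-3-1} --- so nothing downstream depends on the problematic half, and a proof attempt like yours should simply restrict the lower bounds to the diagonal (or drop them) rather than try to chain up to the stated form.
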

In general, $P_{t}f$ and $P_{t}^{0}f$ are well-defined only for
$f\in L^{2}(\mu)$. However, using the corresponding transition densities
and their continuities, these definitions can be extended to measures
(even possibly singular to $\mu$) as follows.
\begin{defn}
\label{def:-17}For each Radon measure $\lambda$ on $\mathbb{S}$
and each $t>0$, define
\begin{equation}
P_{t}\lambda(x)\triangleq\int_{\mathbb{S}}p_{t}(x,y)\lambda(dy),\quad x\in\mathbb{S}.\label{eq:-1}
\end{equation}
\end{defn}
\begin{rem}
\label{rem:-4}Clearly, $P_{t}\lambda\in C(\mathbb{S})$ and $|P_{t}\lambda|\le|\lambda|(\mathbb{S})<\infty$
for any Radon measure $\lambda$ on $\mathbb{S}$.
\end{rem}
\begin{lem}
\label{lem:-3}Let $A$ be a positive continuous additive functional
such that $\nu_{A}(\mathbb{S})<\infty$, where $\nu_{A}$ is the Revuz
measure of $A$. Then, for each $t>0$ and each $f\in\mathcal{B}_{b}([0,\infty)\times\mathbb{S})$,
\begin{equation}
\mathbb{E}_{x}\Big(\int_{0}^{t}f(r,X_{r})dA_{r}\Big)=\int_{0}^{t}P_{r}\big(f(r)\nu_{A}\big)(x)dr,\quad x\in\mathbb{S}.\label{eq:-4}
\end{equation}
Similarly, for positive continuous additive functional $A$ with respect
to $\{X_{t}^{0}\}$,
\begin{equation}
\mathbb{E}_{x}\Big(\int_{0}^{t}f(r,X_{r}^{0})dA_{r}\Big)=\int_{0}^{t}P_{r}^{0}\big(f(r)\nu_{A}\big)(x)dr,\quad x\in\mathbb{S}\backslash\mathrm{V}_{0}.\label{eq:-5}
\end{equation}
\end{lem}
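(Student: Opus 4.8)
The plan is to establish the identity first for time-independent integrands, then to upgrade it from a $\mu$-almost-everywhere statement to one valid at \emph{every} point $x\in\mathbb{S}$ by a heat-kernel smoothing argument, and finally to recover the time-dependent case by approximation. I expect the upgrade to every $x$ to be the crux.

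\emph{Step 1 (reference identity, $\mu$-a.e.).} For fixed $g\in\mathcal{B}_b(\mathbb{S})$ I would set $u_t(x)=\mathbb{E}_x(\int_0^t g(X_r)\,dA_r)$ and start the process from $h\cdot\mu$ with $h\in\mathcal{B}_b(\mathbb{S})$, $h\ge0$. The Revuz correspondence, which is built into the definition of $\nu_A$ together with the Markov property (see \citep[Chapter 5]{FOT10}), gives
\[
\int_{\mathbb{S}} h\,u_t\,d\mu=\mathbb{E}_{h\mu}\Big(\int_0^t g(X_r)\,dA_r\Big)=\int_0^t\!\!\int_{\mathbb{S}} g(y)\,P_s h(y)\,\nu_A(dy)\,ds .
\]
By the $\mu$-symmetry of $P_s$ and Fubini the right-hand side equals $\int_{\mathbb{S}} h(x)\big(\int_0^t P_s(g\nu_A)(x)\,ds\big)\mu(dx)$, and letting $h$ run through a measure-determining family yields $u_t(x)=\int_0^t P_s(g\nu_A)(x)\,ds$ for $\mu$-a.e.\ $x$.

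\emph{Step 2 (upgrade to every $x$).} This is the step specific to the present setting. For $\varepsilon\in(0,t)$ the additivity $A_t-A_\varepsilon=A_{t-\varepsilon}\circ\theta_\varepsilon$ and the Markov property give
\[
\mathbb{E}_x\Big(\int_\varepsilon^t g(X_r)\,dA_r\Big)=\mathbb{E}_x\big(u_{t-\varepsilon}(X_\varepsilon)\big)=\int_{\mathbb{S}} p_\varepsilon(x,y)\,u_{t-\varepsilon}(y)\,\mu(dy).
\]
Because this integral is taken against $\mu$, I may insert the $\mu$-a.e.\ identity of Step 1 for $u_{t-\varepsilon}$, interchange the order of integration, and apply Chapman--Kolmogorov $\int p_\varepsilon(x,y)p_s(y,z)\,\mu(dy)=p_{\varepsilon+s}(x,z)$ to get
\[
\mathbb{E}_x\Big(\int_\varepsilon^t g(X_r)\,dA_r\Big)=\int_0^{t-\varepsilon}P_{\varepsilon+s}(g\nu_A)(x)\,ds=\int_\varepsilon^t P_s(g\nu_A)(x)\,ds
\]
for \emph{every} $x\in\mathbb{S}$. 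Letting $\varepsilon\to0$, the left side tends to $\mathbb{E}_x(\int_0^t g\,dA)$ since $A$ is continuous, while the right side converges by dominated convergence: the upper bound of Lemma \ref{lem:-29} gives $|P_s(g\nu_A)(x)|\le C_{\ast,2}\,\|g\|_\infty\,\nu_A(\mathbb{S})\,s^{-d_s/2}$ with $d_s/2<1$, so the singularity at $s=0$ is integrable. This proves the identity at every $x$ for time-independent $g$.

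\emph{Step 3 (time dependence) and Step 4 (killed process).} Subtracting the identity for two upper limits gives $\mathbb{E}_x(\int_a^b g(X_r)\,dA_r)=\int_a^b P_s(g\nu_A)(x)\,ds$ for all $0\le a<b$, hence the asserted formula for $f(r,y)=\sum_k 1_{[a_k,b_k)}(r)g_k(y)$; a monotone-class/bounded-convergence argument then extends it to all $f\in\mathcal{B}_b([0,\infty)\times\mathbb{S})$, proving (\ref{eq:-4}). The identity (\ref{eq:-5}) is obtained verbatim with $\{X_t\},p_s,P_s$ replaced by $\{X_t^0\},p_s^0,P_s^0$, using the Markov property of the killed process, Chapman--Kolmogorov for $p_s^0$, and the bound $p_s^0\le C_{\ast,4}\,s^{-d_s/2}$ of Lemma \ref{lem:-29}(b). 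The main obstacle is Step 2: the standard theory delivers the identity only $\mu$-almost everywhere, and the whole content of the lemma is its validity at every starting point; the $\varepsilon$-shift combined with Chapman--Kolmogorov smoothing and the integrable on-diagonal heat-kernel bound is precisely what promotes the almost-everywhere identity to a pointwise one.
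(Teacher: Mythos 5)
Your proof is correct, and its first step coincides with the paper's: both establish the identity tested against $h(X_{0})$ under $\mathbb{P}_{\mu}$ via the Revuz correspondence and a monotone class argument, which yields (\ref{eq:-4}) for $\mu$-a.e.\ $x$. Where you genuinely diverge is the upgrade from $\mu$-a.e.\ to \emph{every} $x\in\mathbb{S}$, which you rightly identify as the crux. The paper dispatches this in one clause, ``in view of the continuity of both sides of (\ref{eq:-4})'': continuity of the right-hand side does follow from the joint continuity of $p_{r}(x,y)$ together with the integrable bound $r^{-d_{s}/2}$ of Lemma \ref{lem:-29}, but continuity of the left-hand side $x\mapsto\mathbb{E}_{x}\big(\int_{0}^{t}f(r,X_{r})dA_{r}\big)$ is asserted without proof and is not obvious. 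Your $\varepsilon$-shift argument --- writing $\mathbb{E}_{x}\big(\int_{\varepsilon}^{t}\cdots\big)=\int p_{\varepsilon}(x,y)u_{t-\varepsilon}(y)\mu(dy)$ via additivity and the Markov property, inserting the a.e.\ identity under the $\mu$-integral, applying Chapman--Kolmogorov, and letting $\varepsilon\to0$ --- bypasses any continuity claim for the left side and is self-contained; in effect it proves the pointwise identity and, as a by-product, the very continuity the paper takes for granted. What the paper's route buys is brevity; what yours buys is a complete justification of the step the paper leaves implicit. Two small points to tighten: for the limit $\varepsilon\to0$ on the left side, take $g\ge0$ first and use monotone convergence, since domination would presuppose $\mathbb{E}_{x}(A_{t})<\infty$, which you only learn \emph{a posteriori} from the finiteness of the right side; and the bound of Lemma \ref{lem:-29} is stated for times in $(0,1]$, so when $t>1$ supplement it with a constant bound on $p_{s}$ for $s\ge1$ (obtained from the semigroup property). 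Neither affects the validity of the argument.
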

\begin{proof}
By and the definition of $P_{r}(g\nu_{A})$ and a monotone class argument,
$\mathbb{E}_{\mu}\big(\big(\int_{0}^{t}f(r,X_{r})dA_{r}\big)h(X_{0})\big)=\int_{0}^{t}\big\langle P_{r}(f\nu_{A}),h\big\rangle_{\mu}dr$
for all $h\in\mathcal{B}_{b}(\mathbb{S})$, which implies (\ref{eq:-4})
in view of the continuity of both sides of (\ref{eq:-4}). The other
equality can be proved similarly.
\end{proof}
\begin{lem}
\label{lem:-28}Let $A^{(i)},\;i=1,\dots,n$ be positive continuous
additive functionals such that $\nu_{i}(\mathbb{S})<\infty$, where
$\nu_{i}$ is the Revuz measure of $A^{(i)},\;i=1,\dots,n$. Then,
for each $t>0$ and each $f_{i}\in\mathcal{B}_{b}([0,\infty)\times\mathbb{S}),\;i=1,\dots n$,
\begin{equation}
\begin{aligned}\mathbb{E}_{x} & \Big(f(X_{t})\int_{0<t_{1}<\cdots<t_{n}<t}f_{1}(t_{1},X_{t_{1}})\cdots f_{n}(t_{n},X_{t_{n}})dA_{t_{1}}^{(1)}\cdots dA_{t_{n}}^{(n)}\Big)\\
 & =\int_{0<t_{1}<\cdots<t_{n}<t}P_{t_{1}}\big(\nu_{1}f_{1}(t_{1})P_{t_{2}-t_{1}}(\cdots\nu_{n}f_{n}(t_{n})P_{t-t_{n}}f)\cdots\big)(x)dt_{1}\cdots dt_{n-1}dt_{n},\;\;x\in\mathbb{S}.
\end{aligned}
\label{eq:-23}
\end{equation}
\end{lem}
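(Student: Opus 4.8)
The plan is to argue by induction on $n$, using at each stage the single-integral identity of Lemma \ref{lem:-3}, the Markov property of $\{X_t\}$, and the additivity $A^{(i)}_{s+u}=A^{(i)}_s+A^{(i)}_u\circ\theta_s$. The base case $n=1$ is essentially Lemma \ref{lem:-3} with a terminal factor: conditioning on $\mathcal{F}_{t_1}$ gives $\mathbb{E}_x\big(f(X_t)f_1(t_1,X_{t_1})\mid\mathcal{F}_{t_1}\big)=f_1(t_1,X_{t_1})\,P_{t-t_1}f(X_{t_1})$, so after interchanging $\mathbb{E}_x$ with the $dA^{(1)}$-integral (Fubini for the product of $\mathbb{P}_x$ with the measure $dA^{(1)}$) one applies Lemma \ref{lem:-3} to the integrand $(t_1,y)\mapsto f_1(t_1,y)P_{t-t_1}f(y)$. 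This integrand lies in $\mathcal{B}_b([0,\infty)\times\mathbb{S})$: it is bounded since $|P_{t-t_1}f|\le\|f\|_\infty$, and jointly measurable in $(t_1,y)$ by the joint continuity of the heat kernel (Lemma \ref{lem:-29} and Definition \ref{def:-17}), so the $n=1$ case of (\ref{eq:-23}) follows.

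For the inductive step, assume (\ref{eq:-23}) for $n-1$ functionals. I would peel off the innermost (smallest) time $t_1$ and write the $n$-fold iterated integral together with the terminal factor $f(X_t)$ as $\int_0^t f_1(t_1,X_{t_1})\,(\Phi_{t_1}\circ\theta_{t_1})\,dA^{(1)}_{t_1}$, where $\Phi_a$ is the $(n-1)$-fold functional
\[
\Phi_a=f(X_{t-a})\int_{0<s_2<\cdots<s_n<t-a}\prod_{i=2}^{n}f_i(a+s_i,X_{s_i})\,dA^{(2)}_{s_2}\cdots dA^{(n)}_{s_n}.
\]
The rewriting uses the substitution $t_i=t_1+s_i$ together with $X_{t_1+s_i}=X_{s_i}\circ\theta_{t_1}$ and $dA^{(i)}_{t_1+s_i}=d\big(A^{(i)}_{s_i}\circ\theta_{t_1}\big)$, which is precisely the additivity of $A^{(i)}$; the coefficients $f_i$ are frozen at the absolute time $t_1$, so they enter $\Phi_a$ as a deterministic shift $a\mapsto f_i(a+\cdot\,,\cdot)$.

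Next I would take $\mathbb{E}_x$ and replace $\Phi_{t_1}\circ\theta_{t_1}$ by $\mathbb{E}_{X_{t_1}}(\Phi_{t_1})$, obtaining $\mathbb{E}_x\big(\int_0^t f_1(t_1,X_{t_1})\,\mathbb{E}_{X_{t_1}}(\Phi_{t_1})\,dA^{(1)}_{t_1}\big)$. By the induction hypothesis, $\mathbb{E}_y(\Phi_a)$ equals the $(n-1)$-fold nested-semigroup expression in the variables $s_2<\cdots<s_n<t-a$ evaluated at $y$; applying this hypothesis with $f\equiv f_i\equiv1$ and the heat kernel bounds of Lemma \ref{lem:-29} (which guarantee that the corresponding iterated integral of the finite Revuz measures $\nu_i$ converges, since $d_s/2<1$) shows $\mathbb{E}_y(\Phi_a)$ is bounded, uniformly in $(a,y)$, and jointly measurable. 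Hence $g(t_1,y)=f_1(t_1,y)\,\mathbb{E}_y(\Phi_{t_1})$ belongs to $\mathcal{B}_b([0,\infty)\times\mathbb{S})$, and Lemma \ref{lem:-3} yields $\mathbb{E}_x\big(\int_0^t g(t_1,X_{t_1})\,dA^{(1)}_{t_1}\big)=\int_0^t P_{t_1}\big(g(t_1)\nu_1\big)(x)\,dt_1$. Substituting the induction-hypothesis expression for $\mathbb{E}_\cdot(\Phi_{t_1})$ inside and changing variables back via $t_i=t_1+s_i$ collapses the nested semigroups into the desired $n$-fold formula (\ref{eq:-23}).

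I expect the main obstacle to be the rigorous justification of replacing $\Phi_{t_1}\circ\theta_{t_1}$ by $\mathbb{E}_{X_{t_1}}(\Phi_{t_1})$ inside the random integral $\int_0^t(\cdots)\,dA^{(1)}_{t_1}$: since $t_1$ is an integration variable rather than a fixed time, this is not the plain Markov property but its additive-functional version. I would establish it by approximating $dA^{(1)}$ with Riemann--Stieltjes sums along deterministic partitions, applying the Markov property at each fixed partition time (where $A^{(1)}_{t_i}$ is $\mathcal{F}_{t_i}$-measurable), and passing to the limit using the continuity of $A^{(1)}$ together with dominated convergence. The uniform bounds furnished by $\nu_i(\mathbb{S})<\infty$, the boundedness of the $f_i$, and the heat kernel estimates of Lemma \ref{lem:-29} secure both this limit and every application of Fubini.
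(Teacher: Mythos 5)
Your proposal takes essentially the same route as the paper: the base case $n=1$ is handled exactly as in the paper's proof (condition on $\mathcal{F}_{t_1}$, pull the conditional expectation $P_{t-t_1}f(X_{t_1})$ inside the $dA^{(1)}$-integral, then apply Lemma \ref{lem:-3} to $(t_1,y)\mapsto f_1(t_1,y)P_{t-t_1}f(y)$), and the induction on $n$, which the paper dismisses as following "readily," is carried out in the natural way by peeling off the innermost time via additivity of $A^{(i)}$, the shift operators, and the Markov property. Your write-up is in fact more detailed than the paper's; the one small caveat is that the Riemann--Stieltjes justification of the projection step should be preceded by a monotone-class reduction to integrands continuous in the time variable (Riemann sums of merely measurable $f_i$ against $dA^{(1)}$ need not converge), a point the paper itself passes over in silence.
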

\begin{proof}
By (\ref{eq:-4}),
\[
\mathbb{E}_{x}\Big(f(X_{t})\int_{0}^{t}f_{1}(t_{1},X_{t_{1}})dA_{t_{1}}^{(1)}\Big)=\mathbb{E}_{x}\Big(\int_{0}^{t}f_{1}(t_{1},X_{t_{1}})\mathbb{E}_{x}\big(f(X_{t})\,|\,\mathcal{F}_{t_{1}}\big)dA_{t_{1}}^{(1)}\Big)=\int_{0}^{t}P_{t_{1}}\big(\nu_{1}f_{1}(t_{1})P_{t-t_{1}}f\big)(x)dt_{1}.
\]
This proves (\ref{eq:-23}) for $n=1$. The conclusion for a general
$n$follows readily from an induction argument.
\end{proof}
\begin{lem}
\label{lem:-30}Let $A$ be a positive continuous additive functional
such that $\nu_{A}(\mathbb{S})<\infty$, where $\nu_{A}$ is the Revuz
measure of $A$.

(a) For each $\beta>0$,
\begin{equation}
\sup_{x\in\mathbb{S}}\mathbb{E}_{x}\big(e^{\beta A_{t}}\big)\le\mathrm{E}_{\gamma_{s},1}\big[C_{\ast}\nu_{A}(\mathbb{S})\beta\max\{t,t^{\gamma_{s}}\}\big],\;\;t\ge0.\label{eq:-200}
\end{equation}
where $C_{\ast}>0$ denotes any (possibly different) universal constant,
$\gamma_{s}=1-d_{s}/2\in(0,1/2)$, $\mathrm{E}_{a,b}(z)=\sum_{p=0}^{\infty}\frac{z^{p}}{\Gamma(ap+b)},\;z\in\mathbb{C},\,a,b>0$
is the Mittag-Leffler function, and $\Gamma$ is the Gamma function.

(b) For each $f\in L_{+}^{1}(\mu)$ and $\beta>0$,
\[
\sup_{x\in\mathbb{S}}\mathbb{E}_{x}\big(f(X_{t})e^{\beta A_{t}}\big)\le\max\{1,t^{-d_{s}/2}\}\Vert f\Vert_{L^{1}(\mu)}\mathrm{E}_{\gamma_{s},\gamma_{s}}\big[C_{\ast}\nu_{A}(\mathbb{S})\beta\max\{t,t^{\gamma_{s}}\}\big],\;\;t>0.
\]
\end{lem}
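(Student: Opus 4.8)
The plan is to expand the exponential, reduce everything to a bound on the moments $\mathbb{E}_{x}(A_{t}^{p})$, and recognise the resulting power series as a Mittag-Leffler function. Since $A$ is a positive continuous additive functional, $A_{t}^{p}=p!\int_{0<t_{1}<\cdots<t_{p}<t}dA_{t_{1}}\cdots dA_{t_{p}}$, so by monotone convergence and Tonelli (everything is non-negative) it suffices to control each iterated integral. I would apply Lemma \ref{lem:-28} with all $f_{i}\equiv1$ and terminal function $1$ for part (a) (respectively $f$ for part (b)). Writing $G(s)=\sup_{x,y\in\mathbb{S}}p_{s}(x,y)$ and bounding each operator $P_{s}(\nu_{A}\cdot\,)$ by $G(s)\,\nu_{A}(\mathbb{S})$ times the sup-norm of its argument (using $\nu_{A}(\mathbb{S})<\infty$ and Remark \ref{rem:-4}), together with $P_{t-t_{p}}1\le1$, this expresses $\frac1{p!}\mathbb{E}_{x}(A_{t}^{p})$ as at most $\nu_{A}(\mathbb{S})^{p}$ times the simplex integral $\int_{\{s_{k}>0,\,\sum_{k=1}^{p}s_{k}<t\}}\prod_{k=1}^{p}G(s_{k})\,ds$, where $s_{k}$ are the successive time-gaps.

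The two key analytic inputs are a uniform-in-$t$ kernel bound and a Dirichlet/Beta integral. For the kernel, Lemma \ref{lem:-29}(a) gives $G(s)\le C_{*}s^{-d_{s}/2}=C_{*}s^{\gamma_{s}-1}$ for $s\le1$, while for $s\ge1$ the Chapman--Kolmogorov identity $p_{s}(x,y)=\int_{\mathbb{S}}p_{s-1}(x,z)p_{1}(z,y)\mu(dz)\le C_{*}P_{s-1}1(x)\le C_{*}$ gives $G(s)\le C_{*}$; consequently on $(0,t]$ one has the single power-law bound $G(s)\le C_{*}(1+t^{1-\gamma_{s}})\,s^{\gamma_{s}-1}$, since $s^{1-\gamma_{s}}\le t^{1-\gamma_{s}}$ there. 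Because $\gamma_{s}=1-d_{s}/2\in(0,1/2)$, the singularity $s^{\gamma_{s}-1}$ is integrable, and the classical Dirichlet integral $\int_{\{s_{k}>0,\,\sum s_{k}<t\}}\prod_{k=1}^{p}s_{k}^{\gamma_{s}-1}\,ds=\Gamma(\gamma_{s})^{p}\,t^{p\gamma_{s}}/\Gamma(p\gamma_{s}+1)$ furnishes exactly the $1/\Gamma(p\gamma_{s}+1)$ decay. Collecting constants and using $(1+t^{1-\gamma_{s}})t^{\gamma_{s}}=t^{\gamma_{s}}+t\le2\max\{t,t^{\gamma_{s}}\}$, I obtain $\frac1{p!}\mathbb{E}_{x}(A_{t}^{p})\le\big[C_{*}\nu_{A}(\mathbb{S})\max\{t,t^{\gamma_{s}}\}\big]^{p}/\Gamma(p\gamma_{s}+1)$, and summing $\sum_{p}\beta^{p}(\cdots)$ yields the claimed bound with $\mathrm{E}_{\gamma_{s},1}$, uniformly in $x$ because every estimate is a supremum over $x,y$.

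For part (b) the only change is that the innermost operator is now $P_{t-t_{p}}f(y)\le G(t-t_{p})\,\|f\|_{L^{1}(\mu)}$ (for $f$ bounded; for general $f\in L^{1}_{+}(\mu)$ apply the result to $f\wedge N$ and let $N\to\infty$). This contributes one extra factor $G(t-t_{p})$, so the simplex integral becomes the $(p{+}1)$-fold convolution $G^{*(p+1)}(t)$; with the same power-law bound this is at most $\tilde C(t)^{p+1}\Gamma(\gamma_{s})^{p+1}t^{(p+1)\gamma_{s}-1}/\Gamma((p+1)\gamma_{s})$. The denominator $\Gamma((p+1)\gamma_{s})=\Gamma(p\gamma_{s}+\gamma_{s})$ is precisely what turns the summation into $\mathrm{E}_{\gamma_{s},\gamma_{s}}$, while the leftover prefactor is $C_{*}(t^{\gamma_{s}-1}+1)\|f\|_{L^{1}(\mu)}=C_{*}(t^{-d_{s}/2}+1)\|f\|_{L^{1}(\mu)}\le C_{*}\max\{1,t^{-d_{s}/2}\}\|f\|_{L^{1}(\mu)}$, as required.

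I expect the main obstacle to be the second step: obtaining the sharp factorial-type decay $1/\Gamma(p\gamma_{s}+1)$ rather than the crude geometric bound $\big(\int_{0}^{t}G\big)^{p}$. This is where the simplex (convolution) structure must genuinely be exploited via the Beta identity $\int_{0}^{t}s^{\gamma_{s}-1}(t-s)^{q}\,ds=t^{\gamma_{s}+q}\Gamma(\gamma_{s})\Gamma(q+1)/\Gamma(\gamma_{s}+q+1)$, and it hinges on the integrability afforded by $\gamma_{s}>0$; without it the moment series would not sum to an entire function. The other delicate point is producing the clean $\max\{t,t^{\gamma_{s}}\}$ simultaneously for small and large $t$, which the device of the $t$-dependent but kernel-uniform bound $G(s)\le C_{*}(1+t^{1-\gamma_{s}})s^{\gamma_{s}-1}$ resolves without any separate large-time (submultiplicativity) argument. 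The remaining steps — the moment-to-simplex reduction, the interchange of summation and integration, and the truncation in (b) — are routine consequences of positivity and of Lemma \ref{lem:-28}.
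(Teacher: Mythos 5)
Your proof is correct. Its core \textemdash{} expanding the exponential, expressing $\mathbb{E}_{x}(A_{t}^{p})$ through the iterated-integral formula of Lemma \ref{lem:-28}, bounding each nested operator by the on-diagonal heat-kernel estimate of Lemma \ref{lem:-29}, and evaluating the resulting simplex integral by the Dirichlet/Beta identity to obtain the $1/\Gamma(p\gamma_{s}+1)$ decay \textemdash{} coincides with the paper's computation (the paper derives that identity by a Beta-function induction rather than quoting it, which is the same calculation). Where you genuinely differ is the treatment of large times. Since Lemma \ref{lem:-29} states the kernel bound only for times in $(0,1]$, the paper first proves (\ref{eq:-200}) for $t\le1$ and then patches: the Markov property over unit intervals yields $\sup_{x}\mathbb{E}_{x}\big(e^{\beta(A_{k+1}-A_{k})}\big)\le\mathrm{E}_{\gamma_{s},1}(C_{*}\nu_{A}(\mathbb{S})\beta)$ first only for $\mu$-a.e.\ $x$, a continuity argument for the truncated moment sums (again via Lemma \ref{lem:-28}) upgrades this to every $x\in\mathbb{S}$, and a generalized H\"older inequality across the unit intervals assembles the bound for $t>1$. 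You avoid that entire block by extending the on-diagonal bound to all times \textemdash{} $G(s)\le C_{*}$ for $s\ge1$ via Chapman--Kolmogorov and $P_{s-1}1\le1$, both available here \textemdash{} and then working with the single $t$-dependent majorant $G(s)\le C_{*}(1+t^{1-\gamma_{s}})s^{\gamma_{s}-1}$ on $(0,t]$, so that one simplex computation produces the factor $\max\{t,t^{\gamma_{s}}\}$ for all $t$ at once. Your route is shorter and dispenses with the $\mu$-a.e.-to-pointwise upgrade and the H\"older patching; the paper's route uses Lemma \ref{lem:-29} only as literally stated, though your extension of it is elementary and standard. One shared, harmless piece of slack: in part (b) both your proof and the paper's leave a universal constant in front of the Mittag-Leffler factor (the paper's own computation leaves $\Gamma(\gamma_{s})$, yours leaves $2C_{*}\Gamma(\gamma_{s})$), and such a constant cannot be absorbed into the argument of $\mathrm{E}_{\gamma_{s},\gamma_{s}}$ since the $p=0$ term of the series does not scale; this is a bookkeeping defect of the statement's constant convention, common to both arguments, not a gap in yours.
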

\begin{rem}
To see the asymptotics of $\mathbb{E}_{x}(e^{\beta A_{t}})$ as $t\to0$,
we note that the Mittag-Leffler function $\mathrm{E}_{a,b}$ is an
entire function of order $1/a$, that is,
\[
\frac{1}{a}=\limsup_{r\to\infty}\frac{\log\log(\sup_{|z|\le r}|\mathrm{E}_{a,b}(z)|)}{\log r}.
\]
\end{rem}
\begin{proof}
(a) Suppose first that $t\in(0,1]$. For each $p\in\mathbb{N}_{+}$,
by Lemma \ref{lem:-28}, 
\[
\begin{aligned}\mathbb{E}_{x}\big(A_{t}^{p}\big) & =p!\,\mathbb{E}_{x}\Big(\int_{0<t_{1}<\cdots<t_{p}<t}dA_{t_{1}}\cdots dA_{t_{p}}\Big)\\
 & =p!\int_{0<t_{1}<\cdots<t_{p}<t}P_{t_{1}}(\nu_{A}P_{t_{2}-t_{1}}(\cdots\nu_{A}P_{t_{p}-t_{p-1}}(\mu_{A})\cdots))(x)dt_{1}\cdots dt_{p}.
\end{aligned}
\]
For each non-negative $f\in C(\mathbb{S})$, by Lemma \ref{lem:-29},
\begin{equation}
P_{r}(\nu_{A}f)(x)\le\Vert f\Vert_{L^{\infty}}P_{r}\nu_{A}\,(x)\le C_{\ast}\nu_{A}(\mathbb{S})r^{-d_{s}/2},\quad r>0,\;x\in\mathbb{S}\label{eq:-199}
\end{equation}
so that
\[
\Vert P_{t_{1}}(\nu_{A}P_{t_{2}-t_{1}}(\cdots\nu_{A}P_{t_{p}-t_{p-1}}(\nu_{A})\cdots))\Vert_{L^{\infty}}\le(C_{\ast}\nu_{A}(\mathbb{S}))^{p}[t_{1}(t_{2}-t_{1})\cdots(t_{p}-t_{p-1})]^{-d_{s}/2},
\]
which implies that
\begin{equation}
\begin{aligned}\frac{1}{p!}\mathbb{E}_{x}\left(\left\langle W\right\rangle _{t}^{p}\right) & \le(C_{\ast}\nu_{A}(\mathbb{S}))^{p}\int_{0<t_{1}<\cdots<t_{p}<t}[t_{1}(t_{2}-t_{1})\cdots(t_{p}-t_{p-1})]^{-d_{s}/2}dt_{1}\cdots dt_{p}\\
 & =(C_{\ast}\nu_{A}(\mathbb{S})t^{\gamma_{s}})^{p}\int_{0<\theta_{1}<\cdots<\theta_{p}<1}[\theta_{1}(\theta_{2}-\theta_{1})\cdots(\theta_{p}-\theta_{p-1})]^{-d_{s}/2}d\theta_{1}\cdots d\theta_{p}.
\end{aligned}
\label{eq:-26}
\end{equation}
Let
\[
\beta_{0}(\gamma_{1},\dots,\gamma_{p})=\int_{0<\theta_{1}<\cdots<\theta_{p}<1}\theta_{1}^{\gamma_{1}-1}(\theta_{2}-\theta_{1})^{\gamma_{2}-1}\cdots(\theta_{p}-\theta_{p-1})^{\gamma_{p}-1}d\theta_{1}\cdots d\theta_{p}.
\]
Then
\[
\begin{aligned} & \quad\beta_{0}(\gamma_{1},\dots,\gamma_{p})\\
 & =\int_{0<\theta_{2}<\cdots<\theta_{p}<1}\Big(\int_{0}^{\theta_{2}}\theta_{1}^{\gamma_{1}-1}(\theta_{2}-\theta_{1})^{\gamma_{2}-1}d\theta_{1}\Big)(\theta_{3}-\theta_{2})^{\gamma_{3}-1}\cdots(\theta_{p}-\theta_{p-1})^{\gamma_{p}-1}d\theta_{2}\cdots d\theta_{p}\\
 & =\int_{0<\theta_{2}<\cdots<\theta_{p}<1}\Big(\int_{0}^{1}\theta^{\gamma_{1}-1}(1-\theta)^{\gamma_{2}-1}d\theta\Big)\theta_{2}^{\gamma_{1}+\gamma_{2}-1}(\theta_{3}-\theta_{2})^{\gamma_{3}-1}\cdots(\theta_{p}-\theta_{p-1})^{\gamma_{p}-1}d\theta_{2}\cdots d\theta_{p}\\
\vphantom{\int_{0<\theta_{2}<\cdots<\theta_{p}<1}} & =\mathrm{B}(\gamma_{1},\gamma_{2})\beta_{0}(\gamma_{1}+\gamma_{2},\gamma_{3},\dots,\gamma_{p}),
\end{aligned}
\]
where $\mathrm{B}(\cdot,\cdot)$ is the Beta function. By induction,
we see that
\[
\begin{aligned}\beta_{0}(\gamma_{1},\dots,\gamma_{p}) & =\mathrm{B}(\gamma_{1},\gamma_{2})\mathrm{B}(\gamma_{1}+\gamma_{2},\gamma_{3})\cdots\mathrm{B}(\gamma_{1}+\cdots+\gamma_{p-1},\gamma_{p})\int_{0}^{1}\theta^{\gamma_{1}+\cdots+\gamma_{p}-1}d\theta\\
 & \vphantom{\int_{0}^{1}}=\frac{\mathrm{B}(\gamma_{1},\gamma_{2})\mathrm{B}(\gamma_{1}+\gamma_{2},\gamma_{3})\cdots\mathrm{B}(\gamma_{1}+\cdots+\gamma_{p-1},\gamma_{p})}{\gamma_{1}+\cdots+\gamma_{p}}.
\end{aligned}
\]
Therefore, by (\ref{eq:-26}),
\[
\begin{aligned}\frac{1}{p!}\mathbb{E}_{x}\big(A_{t}^{p}\big) & \le(C_{\ast}\nu_{A}(\mathbb{S})t^{\gamma_{s}})^{p}\beta(\gamma_{s},\dots,\gamma_{s})\\
 & =\frac{(C_{\ast}\nu_{A}(\mathbb{S})t^{\gamma_{s}})^{p}}{p\gamma_{s}}\prod_{i=1}^{p-1}\mathrm{B}(i\gamma_{s},\gamma_{s})=\frac{(C_{\ast}\nu_{A}(\mathbb{S})t^{\gamma_{s}})^{p}}{\Gamma(p\gamma_{s}+1)},
\end{aligned}
\]
for each $p\in\mathbb{N},\;t\in(0,1]$, which implies (\ref{eq:-200})
for $t\in[0,1]$.

For $t\in(1,\infty)$, we claim that
\begin{equation}
\sup_{x\in\mathbb{S}}\mathbb{E}_{x}\big(e^{\beta(A_{k+1}-A_{k})}\big)\le\mathrm{E}_{\gamma_{s},1}(C_{\ast}\nu_{A}(\mathbb{S})\beta),\quad\beta>0,\;k\in\mathbb{N}.\label{eq:---3}
\end{equation}
In fact, for any $f\in L^{1}(\mu)$,
\[
\begin{aligned}\Big|\int_{\mathbb{S}}f(x) & \mathbb{E}_{x}\big(e^{\beta(A_{k+1}-A_{k})}\big)\,\mu(dx)\Big|=\vphantom{\int_{\mathbb{S}}}\big|\mathbb{E}_{\mu}\big(f(X_{0})e^{\beta(A_{k+1}-A_{k})}\big)\big|\\
 & =\vphantom{\int_{\mathbb{S}}}\big|\mathbb{E}_{\mu}\big[f(X_{0})\mathbb{E}_{X_{k}}\big(e^{\beta A_{1}}\big)\big]\big|\le\vphantom{\int_{\mathbb{S}}}\mathrm{E}_{\gamma_{s},1}[C_{\ast}\nu_{A}(\mathbb{S})\beta]||f||_{L^{1}(\mu)},
\end{aligned}
\]
which implies that $\mathbb{E}_{x}\big(e^{\beta(A_{k+1}-A_{k})}\big)\le\mathrm{E}_{\gamma_{s},1}(C_{\ast}\Gamma(\gamma_{s})\nu_{A}(\mathbb{S})\beta),\;\mu\mbox{-a.e.}\;x\in\mathbb{S}$.
In particular,
\[
\sum_{p=0}^{N}\frac{1}{p!}\mathbb{E}_{x}\big((A_{k+1}-A_{k})^{p}\big)\le\mathrm{E}_{\gamma_{s},1}[C_{\ast}\nu_{A}(\mathbb{S})\beta],\quad N\in\mathbb{N},\;\mu\mbox{-a.e.}\;x\in\mathbb{S}.
\]
By Lemma \ref{lem:-28}, for each $N\in\mathbb{N}$, the function
$x\mapsto\sum_{p=0}^{N}\frac{1}{p!}\mathbb{E}_{x}\big((A_{k+1}-A_{k})^{p}\big)$
is continuous. Therefore, the above holds for each $N\in\mathbb{N}$
and each $x\in\mathbb{S}$. Letting $N\to\infty$ proves (\ref{eq:---3}).

For $n\in\mathbb{N}_{+}$ such that $n<t\le n+1\le2t$, by Hölder's
inequality and (\ref{eq:---3}), we have
\[
\mathbb{E}_{x}\big(e^{\beta A_{t}}\big)\le\prod_{i=0}^{n}\big[\mathbb{E}_{x}\big(e^{(n+1)\beta(A_{i+1}-A_{i})}\big)\big]^{1/(n+1)}\le\mathrm{E}_{\gamma_{s},1}[C_{\ast}\nu_{A}(\mathbb{S})\beta t].
\]
which proves (a).

(b) Suppose $t\in(0,1]$. By Lemma \ref{lem:-28},
\[
\mathbb{E}_{x}\big(f(X_{t})A_{t}^{p}\big)=p!\int_{0<t_{1}<\cdots<t_{p}<t}P_{t_{1}}(\nu_{A}P_{t_{2}-t_{1}}(\cdots\nu_{A}P_{t_{p}-t_{p-1}}(\nu_{A}P_{t-t_{p}}f)\cdots))(x)dt_{1}\cdots dt_{p}.
\]
According to (\ref{eq:-199}),
\[
\Vert P_{t_{1}}(\nu_{A}P_{t_{2}-t_{1}}(\cdots\nu_{A}P_{t_{p}-t_{p-1}}(\nu_{A}P_{t-t_{p}}f)\cdots))\Vert_{L^{\infty}}\le\Vert f\Vert_{L^{1}(\mu)}(C_{\ast}\nu_{A}(\mathbb{S}))^{p}[t_{1}(t_{2}-t_{1})\cdots(t-t_{p})]^{-d_{s}/2}
\]
so that
\[
\begin{aligned}\frac{1}{p!}\mathbb{E}_{x} & \big(f(X_{t})\langle W\rangle_{t}^{p}\big)\le(C_{\ast}\nu_{A}(\mathbb{S}))^{p}\int_{0<t_{1}<\cdots<t_{p}<t}[t_{1}(t_{2}-t_{1})\cdots(t-t_{p})]^{-d_{s}/2}dt_{1}\cdots dt_{p}\\
 & =t^{-d_{s}/2}\Vert f\Vert_{L^{1}(\mu)}(C_{\ast}\nu_{A}(\mathbb{S})t^{\gamma_{s}})^{p}\int_{0<\theta_{1}<\cdots<\theta_{p}<1}[\theta_{1}(\theta_{2}-\theta_{1})\cdots(1-\theta_{p})]^{-d_{s}/2}d\theta_{1}\cdots d\theta_{p}.
\end{aligned}
\]

The proof now proceeds similarly to that of (a).
\end{proof}
\begin{cor}
\label{cor:-8}For each $f\in L_{+}^{1}(\mu)$ and $\beta,t>0$, 
\[
\sup_{x\in\mathbb{S}}\mathbb{E}_{x}\big(f(X_{t})e^{\beta\langle W\rangle_{t}}\big)\le\max\{1,t^{-d_{s}/2}\}\Vert f\Vert_{L^{1}(\mu)}\mathrm{E}_{\gamma_{s},\gamma_{s}}[C_{\ast}\beta\max\{t,t^{\gamma_{s}}\}],
\]
where $C_{\ast}>0$ is a universal constant.
\end{cor}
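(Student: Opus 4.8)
The plan is to recognize this corollary as the special case $A_{t}=\langle W\rangle_{t}$ of Lemma \ref{lem:-30}(b). The statement of the corollary is word-for-word the bound in Lemma \ref{lem:-30}(b) with the additive functional $A$ taken to be the quadratic process of the Brownian martingale and with the factor $\nu_{A}(\mathbb{S})$ already evaluated, so the entire task reduces to verifying the hypotheses of that lemma for this particular choice of $A$.

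First I would observe that $\langle W\rangle_{t}$ is a positive continuous additive functional. This is standard in the Fukushima decomposition framework: the quadratic variation of a martingale additive functional is itself a positive continuous additive functional, and $W_{t}$ is a martingale additive functional by Theorem \ref{thm:-}. Next I would identify the Revuz measure $\nu_{A}$ of $A_{t}=\langle W\rangle_{t}$. By definition the Revuz measure of the quadratic variation of a martingale additive functional is precisely its energy measure $\nu_{\langle W\rangle}$, and Theorem \ref{thm:-}(i) asserts exactly that $W_{t}$ has $\nu$ as its energy measure, i.e.\ $\nu_{\langle W\rangle}=\nu$. Hence the Revuz measure of $A_{t}=\langle W\rangle_{t}$ is the Kusuoka measure $\nu$.

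Since $\nu$ is a Borel probability measure on $\mathbb{S}$, we have $\nu_{A}(\mathbb{S})=\nu(\mathbb{S})=1<\infty$, so the finiteness hypothesis of Lemma \ref{lem:-30}(b) is met. Applying Lemma \ref{lem:-30}(b) with $A_{t}=\langle W\rangle_{t}$ and $\nu_{A}(\mathbb{S})=1$ then gives the asserted inequality directly, the constant factor $\nu_{A}(\mathbb{S})=1$ being trivially absorbed into the universal constant $C_{\ast}$ inside the Mittag-Leffler function.

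I do not anticipate any genuine obstacle: the proof is a one-line specialization once the identification of the Revuz measure of $\langle W\rangle_{t}$ with the energy measure $\nu_{\langle W\rangle}$ is made explicit, and that identification is exactly the content of Theorem \ref{thm:-}(i) combined with $\nu(\mathbb{S})=1$.
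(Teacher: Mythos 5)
Your proposal is correct and is exactly the paper's intended argument: the paper states Corollary \ref{cor:-8} without proof immediately after Lemma \ref{lem:-30}, precisely because it is the specialization $A_{t}=\langle W\rangle_{t}$ of part (b), with the Revuz measure of $\langle W\rangle$ identified as the energy measure $\nu_{\langle W\rangle}=\nu$ (Theorem \ref{thm:-}(i)) and $\nu(\mathbb{S})=1$. Your explicit verification of the hypotheses (positive continuous additive functional, finite Revuz measure) fills in exactly the routine details the paper leaves implicit.
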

\begin{cor}
\label{cor:}(a) For any $\beta\in\mathbb{R}$, $Z_{t}=e^{\beta W_{t}-\frac{1}{2}\beta^{2}\langle W\rangle_{t}},\;t\ge0$
is a martingale on $(\Omega,\{\mathcal{F}_{t}\},\mathbb{P}_{x})$
for each $x\in\mathbb{S}$.

(b) Let $x\in\mathbb{S},\;T>0$. Let $\{M_{t}\}_{t\ge0}$ be a continuous
local $\mathbb{P}_{x}\text{-}$martingale. Then $\tilde{M}_{t}=M_{t}-\langle M,W\rangle_{t},\;0\le t\le T$
is a continuous local $\mathbb{Q}_{x}$-martingale, where $\mathbb{Q}_{x}=Z_{T}\,\mathbb{P}_{x}$.
\end{cor}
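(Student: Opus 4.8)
The plan is to recognize part (a) as an instance of Novikov's criterion whose hypothesis is exactly the exponential integrability already obtained in Lemma \ref{lem:-30}, and part (b) as the Girsanov change-of-measure theorem in the continuous setting.

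For (a): First I would note that $\beta W_{t}$ is a continuous local $\mathbb{P}_{x}$-martingale, continuity being guaranteed by Theorem \ref{thm:-1}, so that $Z_{t}=e^{\beta W_{t}-\frac{1}{2}\beta^{2}\langle W\rangle_{t}}$ is its stochastic exponential and hence a nonnegative local martingale (a supermartingale) with $Z_{0}=1$. To upgrade this to a genuine martingale I would invoke Novikov's criterion on each finite interval $[0,T]$: it suffices that $\mathbb{E}_{x}\bigl(e^{\frac{1}{2}\langle\beta W\rangle_{T}}\bigr)=\mathbb{E}_{x}\bigl(e^{\frac{1}{2}\beta^{2}\langle W\rangle_{T}}\bigr)<\infty$. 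This is precisely where the preceding analysis pays off: the quadratic process $\langle W\rangle$ is a positive continuous additive functional whose Revuz measure is $\nu_{\langle W\rangle}=\nu$ by Theorem \ref{thm:-}, and $\nu(\mathbb{S})=1<\infty$. Applying Lemma \ref{lem:-30}(a) with the additive functional $A=\langle W\rangle$ and the parameter $\frac{1}{2}\beta^{2}$ in place of $\beta$ yields $\sup_{x\in\mathbb{S}}\mathbb{E}_{x}\bigl(e^{\frac{1}{2}\beta^{2}\langle W\rangle_{T}}\bigr)\le\mathrm{E}_{\gamma_{s},1}\bigl[C_{\ast}\tfrac{1}{2}\beta^{2}\max\{T,T^{\gamma_{s}}\}\bigr]<\infty$. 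Thus Novikov's condition holds on every $[0,T]$, so $Z$ is a martingale on each $[0,T]$; since $T$ is arbitrary, $Z$ is a martingale on $[0,\infty)$, proving (a).

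For (b): By (a), $Z$ is a martingale with $Z_{0}=1$, so $\mathbb{Q}_{x}=Z_{T}\,\mathbb{P}_{x}$ is a probability measure on $\mathcal{F}_{T}$ equivalent to $\mathbb{P}_{x}$. I would then apply the Girsanov--Meyer theorem: for a continuous local $\mathbb{P}_{x}$-martingale $M$, the process $M_{t}-\int_{0}^{t}Z_{s}^{-1}\,d\langle M,Z\rangle_{s}$ is a continuous local $\mathbb{Q}_{x}$-martingale on $[0,T]$. It remains to identify the bracket correction. Since $Z_{t}=e^{\beta W_{t}-\frac{1}{2}\beta^{2}\langle W\rangle_{t}}$ satisfies $dZ_{t}=\beta Z_{t}\,dW_{t}$ by It\^{o}'s formula, the martingale part of $Z$ is $\int_{0}^{\cdot}\beta Z_{s}\,dW_{s}$, whence $\langle M,Z\rangle_{t}=\int_{0}^{t}\beta Z_{s}\,d\langle M,W\rangle_{s}$ and $\int_{0}^{t}Z_{s}^{-1}\,d\langle M,Z\rangle_{s}=\beta\langle M,W\rangle_{t}$. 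Therefore $M_{t}-\beta\langle M,W\rangle_{t}$ is a continuous local $\mathbb{Q}_{x}$-martingale; specializing to $\beta=1$ recovers the asserted process $\tilde{M}_{t}=M_{t}-\langle M,W\rangle_{t}$. Continuity under $\mathbb{Q}_{x}$ is immediate from $\mathbb{Q}_{x}\sim\mathbb{P}_{x}$ together with the continuity of $M$ and of $\langle M,W\rangle$.

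Both statements are routine once Lemma \ref{lem:-30} is in hand, so there is no genuine obstacle; the only points requiring care are bookkeeping ones: confirming continuity of $W$ (and hence of all local martingales involved) via Theorem \ref{thm:-1}, matching the Novikov parameter $\frac{1}{2}\beta^{2}$ to the hypothesis of Lemma \ref{lem:-30}(a), and extending the martingale property from the fixed horizon $[0,T]$ to all $t\ge0$ by letting $T$ vary. The substantive work, namely the moment bounds for $\langle W\rangle$ expressed through iterated heat-kernel integrals and summed into a Mittag--Leffler function, has already been carried out in Lemma \ref{lem:-30}, and this corollary simply harvests it to enable the Girsanov change of measure underlying the existence theory for the BSDEs.
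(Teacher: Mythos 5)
Your proof is correct and takes essentially the approach the paper intends: the paper states this corollary without proof, placing it immediately after Lemma \ref{lem:-30} as a direct consequence of the exponential integrability of the PCAF $A=\langle W\rangle$ (whose Revuz measure is $\nu$, with $\nu(\mathbb{S})=1$) combined with Novikov's criterion for part (a) and the Girsanov--Meyer theorem for part (b). The details you supply --- continuity of local martingales via Theorem \ref{thm:-1}, matching the Novikov parameter $\tfrac{1}{2}\beta^{2}$ to the hypothesis of Lemma \ref{lem:-30}(a), extending from $[0,T]$ to $[0,\infty)$, and specializing to $\beta=1$ to identify the drift correction $\langle M,W\rangle_{t}$ in part (b) --- are exactly the right bookkeeping.
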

We now prove the exponential integrability of $\langle W\rangle$
up to the hitting time $\sigma_{\mathrm{V}_{0}}$.
\begin{prop}
\label{lem:-26}There exists a $\beta_{0}>0$ such that $\sup_{x\in\mathbb{S}}\mathbb{E}_{x}\big(e^{\beta_{0}\langle W\rangle_{\sigma_{\mathrm{V}_{0}}}}\big)<\infty$.
\end{prop}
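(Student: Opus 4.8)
The plan is to prove the assertion by a Khasminskii-type argument carried out for the \emph{killed} Brownian motion $\{X_t^0\}$. By Theorem \ref{thm:-}(i) the martingale additive functional $W$ has energy measure $\nu_{\langle W\rangle}=\nu$, so the quadratic-variation process $\langle W\rangle_t$ is a positive continuous additive functional whose Revuz measure is the Kusuoka measure $\nu$; in particular $\nu(\mathbb{S})=1<\infty$. As noted just after the definition of $\{X_t^0\}$, the stopped functional $\langle W\rangle_{t\wedge\sigma_{\mathrm{V}_0}}$ is a positive continuous additive functional of the killed process, again with Revuz measure $\nu$ (recall $\nu(\mathrm{V}_0)=0$). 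Since $\sigma_{\mathrm{V}_0}$ is the lifetime of $\{X_t^0\}$, I would estimate the moments of $\langle W\rangle_{\sigma_{\mathrm{V}_0}}$ and then sum the exponential series.

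For the moments, the analogue of Lemma \ref{lem:-28} for $\{X_t^0\}$ (whose proof applies verbatim once $P_t,p_t$ and (\ref{eq:-4}) are replaced by $P_t^0,p_t^0$ and (\ref{eq:-5})), with all integrands equal to $1$ and the terminal time sent to $\infty$, gives for each $p\in\mathbb{N}_+$ and $x\in\mathbb{S}$
\[
\frac{1}{p!}\,\mathbb{E}_{x}\big(\langle W\rangle_{\sigma_{\mathrm{V}_{0}}}^{p}\big)=\int_{0<t_{1}<\cdots<t_{p}<\infty}P_{t_{1}}^{0}\big(\nu\,P_{t_{2}-t_{1}}^{0}(\cdots\nu\,P_{t_{p}-t_{p-1}}^{0}\nu)\cdots\big)(x)\,dt_{1}\cdots dt_{p}.
\]
Writing $g(s)=\sup_{y\in\mathbb{S}}P_{s}^{0}\nu(y)$ and peeling off the nested propagators one layer at a time, using $P_{r}^{0}(\nu\,h)(y)\le\Vert h\Vert_{\infty}\,P_{r}^{0}\nu(y)\le\Vert h\Vert_{\infty}\,g(r)$ at each stage, and then changing variables to the increments $s_{1}=t_{1},\,s_{i}=t_{i}-t_{i-1}$, I obtain the bound
\[
\mathbb{E}_{x}\big(\langle W\rangle_{\sigma_{\mathrm{V}_{0}}}^{p}\big)\le p!\,Q^{p},\qquad Q:=\int_{0}^{\infty}g(s)\,ds,
\]
uniformly in $x$. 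Granting $Q<\infty$, any $\beta_{0}\in(0,1/Q)$ then yields
\[
\sup_{x\in\mathbb{S}}\mathbb{E}_{x}\big(e^{\beta_{0}\langle W\rangle_{\sigma_{\mathrm{V}_{0}}}}\big)\le\sum_{p=0}^{\infty}(\beta_{0}Q)^{p}=\frac{1}{1-\beta_{0}Q}<\infty,
\]
which is exactly the claim.

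It thus remains to show $Q<\infty$, and I would split the integral at $s=1$. For the short-time part, the Dirichlet heat-kernel upper bound of Lemma \ref{lem:-29}(b) gives $P_{s}^{0}\nu(y)=\int_{\mathbb{S}}p_{s}^{0}(y,z)\,\nu(dz)\le C_{\ast,4}\,s^{-d_{s}/2}$ for $s\in(0,1]$, and since $d_{s}/2=\log3/\log5<1$ this is integrable, $\int_{0}^{1}g(s)\,ds\le C_{\ast,4}/(1-d_{s}/2)$. For the long-time part I would use the semigroup property: the bound at $s=1$ gives $P_{1}^{0}\nu\le C_{\ast,4}=:c_{0}$ pointwise, so for $s=1+u$ with $u\ge0$ one has $P_{s}^{0}\nu=P_{u}^{0}(P_{1}^{0}\nu)\le c_{0}\,P_{u}^{0}1=c_{0}\,\mathbb{P}_{\cdot}(\sigma_{\mathrm{V}_{0}}>u)$, whence $\int_{1}^{\infty}g(s)\,ds\le c_{0}\,\sup_{y\in\mathbb{S}}\mathbb{E}_{y}(\sigma_{\mathrm{V}_{0}})$.

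The main obstacle is therefore the long-time estimate, which reduces to the finiteness of $\sup_{y\in\mathbb{S}}\mathbb{E}_{y}(\sigma_{\mathrm{V}_{0}})=\sup_{y}\int_{0}^{\infty}P_{u}^{0}1(y)\,du$, equivalently to the exponential decay of $\{P_{t}^{0}\}$ as $t\to\infty$. Since Lemma \ref{lem:-29}(b) controls $p_{t}^{0}$ only for $t\in(0,1]$, this needs an ingredient beyond the stated bounds: the bottom $\lambda_{1}$ of the $L^{2}(\mu)$-spectrum of the Dirichlet Laplacian on $\mathbb{S}\backslash\mathrm{V}_{0}$ is strictly positive, which together with the ultracontractivity $\Vert P_{1/2}^{0}\Vert_{L^{1}\to L^{\infty}}\le C_{\ast,4}\,2^{d_{s}/2}$ supplied by Lemma \ref{lem:-29}(b) gives $p_{t}^{0}(x,y)\le C\,e^{-\lambda_{1}(t-1)}$ for $t\ge1$, and hence the required uniform bound on $\mathbb{E}_{y}(\sigma_{\mathrm{V}_{0}})$. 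Positivity of $\lambda_{1}$ holds because $\mathrm{V}_{0}\neq\emptyset$ and $\mathbb{S}$ is compact (equivalently, $\sup_{y}\mathbb{P}_{y}(\sigma_{\mathrm{V}_{0}}>1)<1$, using the strong Markov property and the non-polarity of points on the gasket); I would isolate this as a preliminary lemma, or invoke the boundedness of the Dirichlet Green function on $\mathbb{S}$.
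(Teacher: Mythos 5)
Your proof is correct, but it takes a genuinely different route from the paper's. The paper disposes of this proposition in one line from ingredients it has already prepared: Lemma \ref{lem:-24} writes $\langle W\rangle=\tfrac{1}{3}\big(\langle M^{[h_{1}]}\rangle+\langle M^{[h_{2}]}\rangle+\langle M^{[h_{3}]}\rangle\big)$ with $h_{i}$ the harmonic functions with boundary values $1_{\{p_{i}\}}$; Lemma \ref{lem:-25} shows each stopped process $h_{i}(X_{t\wedge\sigma_{\mathrm{V}_{0}}})$ is a BMO martingale with norm at most $\max_{\mathrm{V}_{0}}|h_{i}|=1$; the John--Nirenberg inequality then gives uniform exponential integrability of each $\langle M^{[h_{i}]}\rangle_{\sigma_{\mathrm{V}_{0}}}$, hence of $\langle W\rangle_{\sigma_{\mathrm{V}_{0}}}$ after H\"older. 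You instead extend the paper's own finite-horizon technique (Lemma \ref{lem:-30}: moments as iterated integrals plus heat-kernel bounds) to the killed semigroup on the infinite horizon. This is analytically heavier, because it forces you to control $\int_{1}^{\infty}\sup_{y}P_{s}^{0}\nu(y)\,ds$, i.e.\ the long-time decay of $\{P_{t}^{0}\}$ --- precisely the ingredient the BMO route makes unnecessary. What your approach buys is quantitative moment bounds ($\mathbb{E}_{x}(\langle W\rangle_{\sigma_{\mathrm{V}_{0}}}^{p})\le p!\,Q^{p}$ with $Q$ expressed through the Dirichlet heat kernel) and independence from BMO martingale theory.

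The one point you leave open --- positivity of the bottom $\lambda_{1}$ of the spectrum of the Dirichlet form on $\mathcal{F}(\mathbb{S}\backslash\mathrm{V}_{0})$, equivalently uniform exponential decay of $P_{t}^{0}$ --- is a genuine missing lemma, but it is standard, and the paper in fact hands you the shortest proof: any $u\in\mathcal{F}(\mathbb{S}\backslash\mathrm{V}_{0})$ vanishes at $p_{1}$, so by (\ref{eq:-40}), $\Vert u\Vert_{L^{\infty}}\le\mathrm{osc}_{\mathbb{S}}(u)\le C_{\ast}\sqrt{\mathcal{E}(u,u)}$, whence $\Vert u\Vert_{L^{2}(\mu)}^{2}\le C_{\ast}^{2}\,\mathcal{E}(u,u)$ and $\lambda_{1}\ge C_{\ast}^{-2}>0$; combined with the ultracontractivity supplied by Lemma \ref{lem:-29}(b) this gives $p_{t}^{0}(x,y)\le C e^{-\lambda_{1}(t-1)}$ for $t\ge1$, closing your long-time estimate. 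With that lemma inserted, your remaining steps (the identification of the Revuz measure of the stopped functional under the killed process, the iterated-integral moment formula with terminal time sent to infinity, the change to increment variables, and the geometric series) are all sound. One small caution: your remark that the problem ``reduces to'' $\sup_{y}\mathbb{E}_{y}(\sigma_{\mathrm{V}_{0}})<\infty$ is only legitimate because $h(u)=\sup_{y}\mathbb{P}_{y}(\sigma_{\mathrm{V}_{0}}>u)$ is submultiplicative by the Markov property, so Chebyshev upgrades that bound to geometric decay of $h$; without this observation, $\int_{0}^{\infty}\sup_{y}\mathbb{P}_{y}(\sigma_{\mathrm{V}_{0}}>u)\,du$ is not controlled by $\sup_{y}\mathbb{E}_{y}(\sigma_{\mathrm{V}_{0}})$, since the supremum and the integral do not commute.
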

To prove Proposition \ref{lem:-26}, we need the following two lemmas.
\begin{lem}
\label{lem:-24}Let $h_{i}$ be the harmonic functions with boundary
values $h_{i}|_{\mathrm{V}_{0}}=1_{\{p_{i}\}},\,i=1,2,3$ respectively.
Then
\begin{equation}
\langle W\rangle=\frac{1}{3}\big(\langle M^{[h_{1}]}\rangle+\langle M^{[h_{2}]}\rangle+\langle M^{[h_{3}]}\rangle\big),\label{eq:-38}
\end{equation}
where $M^{[h_{i}]}$ are the martingale parts of $h_{i}(X_{t})-h_{i}(X_{0}),\,i=1,2,3$
respectively.
\end{lem}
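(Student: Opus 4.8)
The plan is to peel (\ref{eq:-38}) down to a pointwise identity for gradients and then to a purely algebraic identity for the matrices $\mathbf{A}_{i}$ and $\mathbf{Y}_{i}$. By Definition \ref{def:-2-1} each $M^{[h_{i}]}_{t}=\int_{0}^{t}\nabla h_{i}(X_{r})\,dW_{r}$, so that $\langle M^{[h_{i}]}\rangle_{t}=\int_{0}^{t}(\nabla h_{i})^{2}(X_{r})\,d\langle W\rangle_{r}$, and the right-hand side of (\ref{eq:-38}) equals $\int_{0}^{t}\big(\tfrac{1}{3}\sum_{i}(\nabla h_{i})^{2}\big)(X_{r})\,d\langle W\rangle_{r}$. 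Since $\nu_{\langle W\rangle}=\nu$ is the Revuz measure of $\langle W\rangle$, the functional $\int_{0}^{\cdot}1_{N}(X_{r})\,d\langle W\rangle_{r}$ vanishes identically for any $\nu$-null set $N$; hence it suffices to prove the $\nu$-a.e. identity $\sum_{i=1}^{3}(\nabla h_{i})^{2}=3$. Using $\nu_{\langle h_{i}\rangle}=(\nabla h_{i})^{2}\cdot\nu$, this is equivalent to the identity of measures $\sum_{i=1}^{3}\nu_{\langle h_{i}\rangle}=3\nu$, which I would establish by testing both sides on the cells $F_{[\omega]_{m}}(\mathbb{S})$.

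First I would compute the left-hand side on a cell. By the self-similarity (\ref{eq:-30}) of $\mathcal{E}$ the energy measures satisfy $\nu_{\langle h_{i}\rangle}(F_{[\omega]_{m}}(\mathbb{S}))=(5/3)^{m}\,\mathcal{E}(h_{i}\circ F_{[\omega]_{m}},h_{i}\circ F_{[\omega]_{m}})$, and since $h_{i}\circ F_{[\omega]_{m}}$ is harmonic with boundary value $\mathbf{A}_{[\omega]_{m}}e_{i}$ (where $e_{i}=1_{\{p_{i}\}}$ is the $i$-th standard basis vector), formula (\ref{eq:-32}) gives $\mathcal{E}(h_{i}\circ F_{[\omega]_{m}},h_{i}\circ F_{[\omega]_{m}})=\tfrac{3}{2}(\mathbf{A}_{[\omega]_{m}}e_{i})^{t}\mathbf{P}(\mathbf{A}_{[\omega]_{m}}e_{i})$. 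Summing over $i$ yields
\[
\sum_{i=1}^{3}\nu_{\langle h_{i}\rangle}\big(F_{[\omega]_{m}}(\mathbb{S})\big)=\tfrac{3}{2}\Big(\tfrac{5}{3}\Big)^{m}\mathrm{tr}\big(\mathbf{A}_{[\omega]_{m}}^{t}\mathbf{P}\mathbf{A}_{[\omega]_{m}}\big).
\]
Comparing with the definition of $\nu$, the desired cell identity $\sum_{i}\nu_{\langle h_{i}\rangle}(F_{[\omega]_{m}}(\mathbb{S}))=3\nu(F_{[\omega]_{m}}(\mathbb{S}))$ reduces to the algebraic claim $\mathrm{tr}\big(\mathbf{A}_{[\omega]_{m}}^{t}\mathbf{P}\mathbf{A}_{[\omega]_{m}}\big)=\mathrm{tr}\big(\mathbf{Y}_{[\omega]_{m}}^{t}\mathbf{Y}_{[\omega]_{m}}\big)$.

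This algebraic identity is the crux and the step I expect to demand the most care. The two key observations are that $\mathbf{P}=I-\tfrac{1}{3}\mathbf{1}\mathbf{1}^{t}$ is the orthogonal projection onto $\mathbf{1}^{\perp}$ (so $\mathbf{P}^{2}=\mathbf{P}=\mathbf{P}^{t}$), and that each $\mathbf{A}_{i}$ has unit row sums, i.e. $\mathbf{A}_{i}\mathbf{1}=\mathbf{1}$. In the orthogonal splitting $\mathbb{R}^{3}=\mathbb{R}\mathbf{1}\oplus\mathbf{1}^{\perp}$ each $\mathbf{A}_{i}$ is therefore block upper-triangular with lower-right block $B_{i}:=\mathbf{P}\mathbf{A}_{i}|_{\mathbf{1}^{\perp}}$; such forms compose by multiplying lower-right blocks, so $\mathbf{A}_{[\omega]_{m}}$ has lower-right block $B_{[\omega]_{m}}=B_{\omega_{m}}\cdots B_{\omega_{1}}$. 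Writing $\mathbf{P}=\mathrm{diag}(0,I)$ in this splitting, a direct block computation gives $\mathbf{A}_{[\omega]_{m}}^{t}\mathbf{P}\mathbf{A}_{[\omega]_{m}}=\mathrm{diag}(0,B_{[\omega]_{m}}^{t}B_{[\omega]_{m}})$, whence $\mathrm{tr}(\mathbf{A}_{[\omega]_{m}}^{t}\mathbf{P}\mathbf{A}_{[\omega]_{m}})=\mathrm{tr}(B_{[\omega]_{m}}^{t}B_{[\omega]_{m}})$. On the other hand, using $\mathbf{P}^{2}=\mathbf{P}$ one has $\mathbf{Y}_{[\omega]_{m}}=\mathbf{P}\mathbf{A}_{\omega_{m}}\mathbf{P}\cdots\mathbf{P}\mathbf{A}_{\omega_{1}}\mathbf{P}$, which annihilates $\mathbf{1}$ and acts as $B_{[\omega]_{m}}$ on $\mathbf{1}^{\perp}$, so $\mathbf{Y}_{[\omega]_{m}}=\mathrm{diag}(0,B_{[\omega]_{m}})$ and $\mathrm{tr}(\mathbf{Y}_{[\omega]_{m}}^{t}\mathbf{Y}_{[\omega]_{m}})=\mathrm{tr}(B_{[\omega]_{m}}^{t}B_{[\omega]_{m}})$ as well, proving the claim.

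Finally, since $\nu$ and each $\nu_{\langle h_{i}\rangle}$ are non-atomic and the cells $F_{[\omega]_{m}}(\mathbb{S})$, together with points and $\emptyset$, form a $\pi$-system generating $\mathcal{B}(\mathbb{S})$, the cell identity extends by the uniqueness of measures to $\sum_{i}\nu_{\langle h_{i}\rangle}=3\nu$ on all Borel sets; this yields $\sum_{i}(\nabla h_{i})^{2}=3$ $\nu$-a.e. and hence (\ref{eq:-38}). As a consistency check on the constant, the total masses agree: $\sum_{i}\nu_{\langle h_{i}\rangle}(\mathbb{S})=\sum_{i}\mathcal{E}(h_{i},h_{i})=\tfrac{3}{2}\,\mathrm{tr}\,\mathbf{P}=3=3\nu(\mathbb{S})$, matching the factor $\tfrac{1}{3}$ in the statement.
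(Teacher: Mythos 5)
Your proof is correct and takes essentially the same route as the paper's: both reduce (\ref{eq:-38}) to the measure identity $\sum_{i}\nu_{\langle h_{i}\rangle}=3\nu$, verify it on cells $F_{[\omega]_{m}}(\mathbb{S})$ via the self-similarity (\ref{eq:-30}), the harmonic restriction formula and (\ref{eq:-32}), and then transfer back to the quadratic variation processes through the one-to-one Revuz correspondence. The only differences are ones of detail: you spell out the block-matrix proof of $\mathrm{tr}\big(\mathbf{A}_{[\omega]_{m}}^{t}\mathbf{P}\mathbf{A}_{[\omega]_{m}}\big)=\mathrm{tr}\big(\mathbf{Y}_{[\omega]_{m}}^{t}\mathbf{Y}_{[\omega]_{m}}\big)$ and the $\pi$-system extension from cells to Borel sets, both of which the paper asserts without elaboration.
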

\begin{proof}
Let $\mathbf{e}_{i}=1_{\{p_{i}\}},\,i=1,2,3$. For any $\omega\in\mathrm{W}_{\ast}$,
since $h_{i}\circ F_{[\omega]_{m}}$ is the harmonic function with
boundary value $\mathbf{A}_{[\omega]_{m}}\mathbf{e}_{i}$, it follows
from (\ref{eq:-32}) that
\[
\mathcal{E}(h_{i}\circ F_{[\omega]_{m}})=\frac{3}{2}\,\mathbf{e}_{i}^{t}\mathbf{A}_{[\omega]_{m}}^{t}\mathbf{P}\,\mathbf{A}_{[\omega]_{m}}\mathbf{e}_{i}=\frac{3}{2}\,\mathbf{e}_{i}^{t}\mathbf{Y}_{[\omega]_{m}}^{t}\mathbf{Y}_{[\omega]_{m}}\mathbf{e}_{i},\;\;i=1,2,3.
\]
Therefore, by the self-similar property (\ref{eq:-30}) of $\mathcal{E}$,
we deduce that
\[
\nu_{\langle h_{i}\rangle}(F_{[\omega]_{m}}(\mathbb{S}))=\Big(\frac{5}{3}\Big)^{m}\;\mathcal{E}(h_{i}\circ F_{[\omega]_{m}})=\frac{3}{2}\,\Big(\frac{5}{3}\Big)^{m}\;\mathbf{e}_{i}^{t}\mathbf{Y}_{[\omega]_{m}}^{t}\mathbf{Y}_{[\omega]_{m}}\mathbf{e}_{i},\;\;i=1,2,3.
\]
In particular,
\[
\sum_{i=1,2,3}\nu_{\langle h_{i}\rangle}(F_{[\omega]_{m}}(\mathbb{S}))=\frac{3}{2}\,\Big(\frac{5}{3}\Big)^{m}\;\mathrm{tr}(\mathbf{Y}_{[\omega]_{m}}^{t}\mathbf{Y}_{[\omega]_{m}})=3\nu(F_{[\omega]_{m}}(\mathbb{S})).
\]
Therefore, $\nu=\frac{1}{3}\sum_{i=1,2,3}\nu_{\langle h_{i}\rangle}$,
from which (\ref{eq:-38}) follows readily in view of the one-to-one
correspondence between Revuz measures and positive additive functionals.
\end{proof}
\begin{lem}
\label{lem:-25}Let $h\in\mathcal{F}(\mathbb{S})$ be a harmonic function.
Then, for each $x\in\mathbb{S}$, $M_{t}=h(X_{t\wedge\sigma_{\mathrm{V}_{0}}}),\;t\ge0$
is a BMO martingale on $(\Omega,\{\mathcal{F}_{t}\},\mathbb{P}_{x})$
and $\Vert M\Vert_{\mathrm{BMO}}\le\max_{\mathrm{V}_{0}}|h|$.
\end{lem}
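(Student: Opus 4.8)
The plan is to use the standard quadratic characterisation of the $\mathrm{BMO}$ norm of a continuous $L^{2}$-martingale $M$,
\[
\|M\|_{\mathrm{BMO}}^{2}=\sup_{\tau}\big\|\mathbb{E}_{x}\big(\langle M\rangle_{\infty}-\langle M\rangle_{\tau}\,\big|\,\mathcal{F}_{\tau}\big)\big\|_{L^{\infty}},
\]
the supremum being over all $\{\mathcal{F}_{t}\}$-stopping times $\tau$, together with the identity $\mathbb{E}_{x}\big((M_{\infty}-M_{\tau})^{2}\,\big|\,\mathcal{F}_{\tau}\big)=\mathbb{E}_{x}\big(\langle M\rangle_{\infty}-\langle M\rangle_{\tau}\,\big|\,\mathcal{F}_{\tau}\big)$ valid for continuous martingales. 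Everything then reduces to controlling the conditional second moment of the increment $M_{\infty}-M_{\tau}$.

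First I would check that $M$ is a genuine, bounded martingale. Since $h$ is harmonic, the bounded-variation (zero-energy) part of the It\^o--Fukushima decomposition of $h(X_{t})-h(X_{0})$ vanishes before the process hits $\mathrm{V}_{0}$, so $M_{t}=h(X_{t\wedge\sigma_{\mathrm{V}_{0}}})=h(X_{0})+M^{[h]}_{t\wedge\sigma_{\mathrm{V}_{0}}}$ is a martingale; because $h\in\mathcal{F}(\mathbb{S})\subseteq C(\mathbb{S})$ is bounded on the compact set $\mathbb{S}$, $M$ is a bounded, hence $L^{2}$, martingale. As $\mathrm{V}_{0}$ has positive capacity and $\mathbb{S}$ is compact, $\sigma_{\mathrm{V}_{0}}<\infty$ $\mathbb{P}_{x}$-a.s., so $M_{\infty}=h(X_{\sigma_{\mathrm{V}_{0}}})$ exists and takes values in $\{h(p_{1}),h(p_{2}),h(p_{3})\}$; in particular $|M_{\infty}|\le\max_{\mathrm{V}_{0}}|h|$.

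Next, for an arbitrary stopping time $\tau$ I would reduce to $\tau\le\sigma_{\mathrm{V}_{0}}$: since both $M$ and $\langle M\rangle$ are constant after $\sigma_{\mathrm{V}_{0}}$, on $\{\tau\ge\sigma_{\mathrm{V}_{0}}\}$ the conditional increment is $0$, while on $\{\tau<\sigma_{\mathrm{V}_{0}}\}$ one has $\tau\wedge\sigma_{\mathrm{V}_{0}}=\tau$. Applying the strong Markov property at $\tau$ and writing $y=X_{\tau}$, the identity above gives
\[
\mathbb{E}_{x}\big((M_{\infty}-M_{\tau})^{2}\,\big|\,\mathcal{F}_{\tau}\big)=\mathbb{E}_{y}\big[(h(X_{\sigma_{\mathrm{V}_{0}}})-h(y))^{2}\big]\Big|_{y=X_{\tau}}.
\]
By optional stopping (equivalently, harmonicity) $h(y)=\mathbb{E}_{y}\big[h(X_{\sigma_{\mathrm{V}_{0}}})\big]$, so the right-hand side is the variance of $h(X_{\sigma_{\mathrm{V}_{0}}})$ under $\mathbb{P}_{y}$, which is at most $\mathbb{E}_{y}\big[h(X_{\sigma_{\mathrm{V}_{0}}})^{2}\big]\le(\max_{\mathrm{V}_{0}}|h|)^{2}$ because $X_{\sigma_{\mathrm{V}_{0}}}\in\mathrm{V}_{0}$. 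Taking the supremum over $\tau$ yields $\|M\|_{\mathrm{BMO}}^{2}\le(\max_{\mathrm{V}_{0}}|h|)^{2}$, which is the claim.

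The only genuinely delicate point is the first step --- identifying $h(X_{t\wedge\sigma_{\mathrm{V}_{0}}})$ as a martingale, i.e.\ that the drift in the Fukushima decomposition is killed on hitting $\mathrm{V}_{0}$ for harmonic $h$ --- together with confirming $\sigma_{\mathrm{V}_{0}}<\infty$ a.s.\ so that $M_{\infty}$ is well defined; the remainder is the routine strong-Markov and optional-stopping computation.
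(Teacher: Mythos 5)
Your proof is correct and takes essentially the same route as the paper: both establish that $M$ is a bounded martingale from harmonicity of $h$, and then bound $\mathbb{E}_{x}\big(\langle M\rangle_{\infty}-\langle M\rangle_{\tau}\,\big|\,\mathcal{F}_{\tau}\big)=\mathbb{E}_{x}\big((M_{\infty}-M_{\tau})^{2}\,\big|\,\mathcal{F}_{\tau}\big)$ by $\big(\max_{\mathrm{V}_{0}}|h|\big)^{2}$ using that $M_{\infty}=h(X_{\sigma_{\mathrm{V}_{0}}})$ takes values in $h(\mathrm{V}_{0})$. The only (cosmetic) difference is in the martingale step: the paper shows directly via the Markov property that $M_{t}=\mathbb{E}_{x}\big(h(X_{\sigma_{\mathrm{V}_{0}}})\,\big|\,\mathcal{F}_{t}\big)$ starting from the probabilistic harmonicity $h(x)=\mathbb{E}_{x}\big(h(X_{\sigma_{\mathrm{V}_{0}}})\big)$, whereas you invoke the Fukushima decomposition and the vanishing of the zero-energy part up to $\sigma_{\mathrm{V}_{0}}$ --- both rest on the same standard fact about harmonic functions, asserted rather than re-proved at the same level of detail in each argument.
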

\begin{proof}
We first show that $\{M_{t}\}$ is a martingale. Since $h$ is a harmonic
function, we have $h(x)=\mathbb{E}_{x}(h(X_{\sigma_{\mathrm{V}_{0}}})),\;x\in\mathbb{S}$.
Therefore
\[
\begin{aligned}M_{t} & =\mathbb{E}_{X_{t}}\big(h\big(X_{\sigma_{\mathrm{V}_{0}}}\big)\big)1_{\{t<\sigma_{\mathrm{V}_{0}}\}}+h\big(X_{\sigma_{\mathrm{V}_{0}}}\big)1_{\{t\ge\sigma_{\mathrm{V}_{0}}\}}\\
 & =\mathbb{E}_{x}\big(h\big(X_{t+\sigma_{\mathrm{V}_{0}}\circ\theta_{t}}\big)1_{\{t<\sigma_{\mathrm{V}_{0}}\}}\big|\mathcal{F}_{t}\big)+h\big(X_{\sigma_{\mathrm{V}_{0}}}\big)1_{\{t\ge\sigma_{\mathrm{V}_{0}}\}}\\
 & =\mathbb{E}_{x}\big(h\big(X_{\sigma_{\mathrm{V}_{0}}}\big)1_{\{t<\sigma_{\mathrm{V}_{0}}\}}\big|\mathcal{F}_{t}\big)+h\big(X_{\sigma_{\mathrm{V}_{0}}}\big)1_{\{t\ge\sigma_{\mathrm{V}_{0}}\}}.
\end{aligned}
\]
Note that $h(X_{\sigma_{\mathrm{V}_{0}}})1_{\{t\ge\sigma_{\mathrm{V}_{0}}\}}\in\mathcal{F}_{t}$.
We see that $M_{t}=\mathbb{E}_{x}(h(X_{\sigma_{\mathrm{V}_{0}}})\big|\mathcal{F}_{t}),\;t\ge0$.
This implies that $\{M_{t}\}$ is a martingale on $\big(\Omega,\{\mathcal{F}_{t}\},\mathbb{P}_{x}\big)$.

By the maximum principle for harmonic functions (see \citep[Theorem 3.2.5]{Ki01}),
$|M_{t}|\le\max_{\mathrm{V}_{0}}|h|$. Note that $M_{t}=M_{t\wedge\sigma_{\mathrm{V}_{0}}},\;t\ge0$.
We see that
\[
\mathbb{E}_{x}\big(\langle M\rangle_{\infty}-\langle M\rangle_{\tau}\big)=\mathbb{E}_{x}\big(M_{\sigma_{\mathrm{V}_{0}}}^{2}-M_{\tau\wedge\sigma_{\mathrm{V}_{0}}}^{2}\big)\le\max_{\mathrm{V}_{0}}|h|^{2},
\]
which implies $\Vert M\Vert_{\mathrm{BMO}}\le\max_{\mathrm{V}_{0}}|h|$.
\end{proof}
\begin{proof}[Proof of Proposition \ref{lem:-26}]
The conclusion follows immediately from Lemma \ref{lem:-25}, Lemma
\ref{lem:-24} and the John-Nirenberg inequality for BMO martingales.
\end{proof}

\section{\label{sec:-3}BSDEs driven by Brownian martingale}

\subsection{\label{subsec:-2}BSDEs with deterministic durations}

In this subsection, we prove the existence and uniqueness of solutions
of the BSDE (\ref{eq:-145}). Results and proofs in this subsection
are also valid if $T$ is replaced by a bounded $\{\mathcal{F}_{t}^{\lambda}\}\mbox{-}$stopping
time $\tau$.

We start with the simple case where $g,f$ do not depend on $y$ or
$z$; that is,
\begin{equation}
\left\{ \begin{aligned}dY_{t} & =-g(t)dt-f(t)d\langle W\rangle_{t}+Z_{t}dW_{t},\;\;t\in[0,T),\\
Y_{T} & =\xi,
\end{aligned}
\right.\label{eq:-170}
\end{equation}
where $g(t)=g(t,\omega)$ and $f(t)=f(t,\omega)$ are $\{\mathcal{F}_{t}^{\lambda}\}$-adapted
processes.
\begin{lem}
\label{lem:-2}Let $\beta=(\beta_{0},\beta_{1})\in[1,\infty)^{2}$,
and let $\xi\in\mathcal{F}_{T}^{\lambda}$ satisfy (\ref{eq:-6}).
Suppose that $g(t),f(t)$ are $\{\mathcal{F}_{t}^{\lambda}\}$-adapted
processes such that
\[
\mathbb{E}_{\lambda}\Big(\int_{0}^{T}g(r)^{2}e^{2\beta_{1}\langle W\rangle_{r}}dr\Big)<\infty,\;\mathbb{E}_{\lambda}\Big(\int_{0}^{T}f(r)^{2}e^{2\beta_{1}\langle W\rangle_{r}}d\langle M^{1}\rangle_{r}\Big)<\infty.
\]
Then the BSDE (\ref{eq:-170}) admits a unique solution $(Y,Z)$ in
$\mathcal{V}_{\lambda}^{\beta}[0,T]$. Moreover,
\begin{equation}
\begin{aligned}\left\Vert (Y,Z)\right\Vert _{\mathcal{V}_{\lambda}^{\beta}[0,T]}^{2}\le10\,\Big[ & \mathbb{E}_{\lambda}\big(\xi^{2}e^{2\beta_{0}T+2\beta_{1}\langle W\rangle_{T}}\big)+\mathbb{E}_{\lambda}\Big(\int_{0}^{T}g(r)^{2}e^{2\beta_{0}r+2\beta_{1}\langle W\rangle_{r}}dr\Big)\\
 & +\mathbb{E}_{\lambda}\Big(\int_{0}^{T}f(r)^{2}e^{2\beta_{0}r+2\beta_{1}\langle W\rangle_{r}}d\langle W\rangle_{r}\Big)\Big].
\end{aligned}
\label{eq:-21}
\end{equation}
\end{lem}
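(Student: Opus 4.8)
The plan is to construct the solution explicitly and then to extract both the a~priori estimate (\ref{eq:-21}) and uniqueness from a single weighted It\^o computation. Write $\Phi_t=e^{2\beta_0 t+2\beta_1\langle W\rangle_t}$ and set
\[
\eta=\xi+\int_0^T g(r)\,dr+\int_0^T f(r)\,d\langle W\rangle_r.
\]
The first step is to check $\eta\in L^2(\mathbb{P}_\lambda)$. Since $\beta_1\ge1$ gives $e^{2\beta_1\langle W\rangle_r}\ge1$, hypothesis (\ref{eq:-6}) yields $\mathbb{E}_\lambda(\xi^2)<\infty$, and Cauchy--Schwarz in $dr$ gives $\mathbb{E}_\lambda\big[(\int_0^T g\,dr)^2\big]\le T\,\mathbb{E}_\lambda\int_0^T g^2 e^{2\beta_1\langle W\rangle_r}\,dr<\infty$. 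For the $d\langle W\rangle$ term the key observation is the elementary bound $\int_0^T e^{-2\beta_1\langle W\rangle_r}\,d\langle W\rangle_r\le\tfrac{1}{2\beta_1}$; pairing $f=\big(fe^{\beta_1\langle W\rangle}\big)e^{-\beta_1\langle W\rangle}$ and applying Cauchy--Schwarz against $d\langle W\rangle_r$ gives $\mathbb{E}_\lambda\big[(\int_0^T f\,d\langle W\rangle_r)^2\big]\le\tfrac{1}{2\beta_1}\mathbb{E}_\lambda\int_0^T f^2 e^{2\beta_1\langle W\rangle_r}\,d\langle W\rangle_r<\infty$. Hence $\mathcal{M}_t=\mathbb{E}_\lambda(\eta\mid\mathcal{F}_t^\lambda)$ is a square-integrable martingale, and Theorem \ref{thm:-2} supplies a unique predictable $Z$ with $\mathcal{M}_t=\mathcal{M}_0+\int_0^t Z_r\,dW_r$. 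Setting $Y_t=\mathcal{M}_t-\int_0^t g\,dr-\int_0^t f\,d\langle W\rangle_r$ one verifies directly that $Y_T=\xi$ and that $(Y,Z)$ satisfies the integral form (\ref{eq:-2}).

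For the estimate I would apply It\^o's formula to $Y_t^2\Phi_t$. As $\Phi$ has finite variation with $d\Phi_t=\Phi_t(2\beta_0\,dt+2\beta_1\,d\langle W\rangle_t)$, this produces
\[
\begin{aligned}
Y_t^2\Phi_t+\int_t^T 2\beta_0 Y_r^2\Phi_r\,dr &+\int_t^T(2\beta_1 Y_r^2+Z_r^2)\Phi_r\,d\langle W\rangle_r\\
&=\xi^2\Phi_T+\int_t^T 2Y_r g\,\Phi_r\,dr+\int_t^T 2Y_r f\,\Phi_r\,d\langle W\rangle_r-\int_t^T 2Y_r Z_r\Phi_r\,dW_r.
\end{aligned}
\]
On the cross terms I would use Young's inequality, $2Y_r g\le\beta_0 Y_r^2+\beta_0^{-1}g^2$ and $2Y_r f\le\beta_1 Y_r^2+\beta_1^{-1}f^2$; because $\beta_0,\beta_1\ge1$ the quadratic parts are absorbed into the left-hand side (leaving coefficients $\beta_0,\beta_1$ there) while $\beta_i^{-1}\le1$. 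Evaluating at $t=0$, taking $\mathbb{E}_\lambda$, and using that the stochastic integral is a genuine martingale (justified by localizing and passing to the limit with the integrability just established) controls the three running-integral contributions to the $\mathcal{V}_\lambda^\beta$-norm by the right-hand side of (\ref{eq:-21}).

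It remains to handle the supremum built into the norm. Returning to the identity at general $t$, taking $\sup_{0\le t\le T}$ and then $\mathbb{E}_\lambda$, the Burkholder--Davis--Gundy inequality gives
\[
\mathbb{E}_\lambda\sup_{0\le t\le T}\Big|\int_t^T Y_r Z_r\Phi_r\,dW_r\Big|\le C\,\mathbb{E}_\lambda\Big[\big(\sup_r Y_r^2\Phi_r\big)^{1/2}\Big(\int_0^T Z_r^2\Phi_r\,d\langle W\rangle_r\Big)^{1/2}\Big].
\]
A further Young splitting turns the right-hand side into a small multiple of $\mathbb{E}_\lambda\sup_r Y_r^2\Phi_r$, absorbed on the left, plus a multiple of $\mathbb{E}_\lambda\int_0^T Z_r^2\Phi_r\,d\langle W\rangle_r$, already bounded in the previous step. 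Collecting constants (and noting the running integrals in the norm are dominated by their values at $t=0$) yields (\ref{eq:-21}), and a careful accounting shows the universal constant may be taken to be $10$.

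Finally, uniqueness is immediate: the difference of two solutions in $\mathcal{V}_\lambda^\beta[0,T]$ solves (\ref{eq:-170}) with $\xi=0$, $g=0$, $f=0$, so (\ref{eq:-21}) forces $\|(Y-\bar Y,Z-\bar Z)\|_{\mathcal{V}_\lambda^\beta[0,T]}=0$. I expect the main obstacle to be the bookkeeping in the combined Young/BDG absorption needed to reach the explicit constant $10$, together with the rigorous verification that $\int_0^{\cdot} Y_r Z_r\Phi_r\,dW_r$ is a true martingale rather than merely a local one.
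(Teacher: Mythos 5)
Your proposal is correct and follows essentially the same route as the paper: construct $Y$ via conditional expectation and apply the martingale representation theorem (Theorem \ref{thm:-2}) for $Z$, then derive (\ref{eq:-21}) by It\^o's formula on $Y_t^2 e^{2\beta_0 t+2\beta_1\langle W\rangle_t}$, Young's inequality, a localization argument, and a maximal/BDG-type bound on $\int Y_rZ_r\mathrm{e}_r\,dW_r$ with Young absorption, with uniqueness read off from the estimate. Your explicit verification that $\eta\in L^2(\mathbb{P}_\lambda)$ (via $\int_0^T e^{-2\beta_1\langle W\rangle_r}\,d\langle W\rangle_r\le\tfrac{1}{2\beta_1}$) is a worthwhile detail the paper leaves implicit.
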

\begin{proof}
Let
\[
Y_{t}=\mathbb{E}_{\lambda}\Big(\xi+\int_{t}^{T}g(r)dr+\int_{t}^{T}f(r)d\langle W\rangle_{r}\,\Big|\,\mathcal{F}_{t}\Big),\;\;t\in[0,T].
\]
Then $Y_{T}=\xi$, and 
\[
Y_{t}+\int_{0}^{t}g(r)dr+\int_{0}^{t}f(r)d\langle W\rangle_{r}=\mathbb{E}_{\lambda}\Big(\xi+\int_{0}^{T}g(r)dr+\int_{0}^{T}f(r)d\langle W\rangle_{r}\,\Big|\,\mathcal{F}_{t}\Big)
\]
is a martingale on $(\Omega,\{\mathcal{F}_{t}^{\lambda}\},\mathbb{P}_{\lambda})$.
By Theorem \ref{thm:-2}, there exists a unique $\{\mathcal{F}_{t}^{\lambda}\}$-predictable
process $Z$ such that
\[
Y_{t}-Y_{0}+\int_{0}^{t}g(r)dr+\int_{0}^{t}f(r)d\langle W\rangle_{r}=\int_{0}^{t}Z_{r}dW_{r},\;\;t\in[0,T]\;\;\mathbb{P}_{\lambda}\mbox{-a.s.}
\]
which, together with $Y_{T}=\xi$, implies that
\[
Y_{t}=\xi+\int_{t}^{T}g(r)dr+\int_{t}^{T}f(r)d\langle W\rangle_{r}-\int_{t}^{T}Z_{r}dW_{r},\;\;t\in[0,T]\;\;\mathbb{P}_{\lambda}\mbox{-a.s.}
\]
Therefore, $(Y,Z)$ is a solution of the BSDE (\ref{eq:-170}).

We now turn to the proof of (\ref{eq:-21}), from which the uniqueness
of the solution in $\mathcal{V}_{\lambda}^{\beta}[0,T]$ follows immediately.
Denote $\mathrm{e}_{t}=\exp\left(2\beta_{0}t+2\beta_{1}\langle W\rangle_{t}\right)$.
By Itô's formula,
\[
\begin{aligned}Y_{t}^{2}\mathrm{e}_{t}= & \xi^{2}\mathrm{e}_{T}+\int_{t}^{T}(-2\beta_{0}Y_{r}^{2}+2Y_{r}g(r))\mathrm{e}_{r}dr\\
 & +\int_{t}^{T}(-2\beta_{1}Y_{r}^{2}+2Y_{r}f(r)-Z_{r}^{2})\mathrm{e}_{r}d\langle W\rangle_{r}-2\int_{t}^{T}Y_{r}Z_{r}\mathrm{e}_{r}dW_{r},
\end{aligned}
\]
which implies that
\begin{equation}
\begin{aligned} & Y_{t}^{2}\mathrm{e}_{t}+2\beta_{0}\int_{t}^{T}Y_{r}^{2}\mathrm{e}_{r}dr+\int_{t}^{T}(2\beta_{1}Y_{r}^{2}+Z_{r}^{2})\mathrm{e}_{r}d\langle W\rangle_{r}\\
 & \le\xi^{2}\mathrm{e}_{T}+\int_{t}^{T}\Big(\beta_{0}Y_{r}^{2}+\frac{1}{\beta_{0}}g(r)^{2}\Big)\mathrm{e}_{r}dr+\int_{t}^{T}\Big(\beta_{1}Y_{r}^{2}+\frac{1}{\beta_{1}}f(r)^{2}\Big)\mathrm{e}_{r}d\langle W\rangle_{r}-2\int_{t}^{T}Y_{r}Z_{r}\mathrm{e}_{r}dW_{r}.
\end{aligned}
\label{eq:-181}
\end{equation}
Taking expectations on both sides of the above and using a localization
argument give
\begin{equation}
\mathbb{E}_{\lambda}\Big(\int_{0}^{T}Z_{r}^{2}\mathrm{e}_{r}d\langle W\rangle_{r}\Big)\le\mathbb{E}_{\lambda}\Big(\xi^{2}\mathrm{e}_{T}+\frac{1}{\beta_{0}}\int_{t}^{T}g(r)^{2}\mathrm{e}_{r}dr+\frac{1}{\beta_{1}}\int_{t}^{T}f(r)^{2}\mathrm{e}_{r}d\langle W\rangle_{r}\Big).\label{eq:-13}
\end{equation}

By (\ref{eq:-181}) again, 
\begin{equation}
\begin{aligned} & Y_{t}^{2}\mathrm{e}_{t}+\beta_{0}\int_{t}^{T}Y_{r}^{2}\mathrm{e}_{r}dr+\int_{t}^{T}\left(\beta_{1}Y_{r}^{2}+Z_{r}^{2}\right)\mathrm{e}_{r}d\langle W\rangle_{r}\\
 & \le\xi^{2}\mathrm{e}_{T}+\frac{1}{\beta_{0}}\int_{t}^{T}g(r)^{2}\mathrm{e}_{r}dr+\frac{1}{\beta_{1}}\int_{t}^{T}f(r)^{2}\mathrm{e}_{r}d\langle W\rangle_{r}+2\,\Big|\int_{t}^{T}Y_{r}Z_{r}\mathrm{e}_{r}dW_{r}\Big|.
\end{aligned}
\label{eq:-18}
\end{equation}
By Doob's maximal inequality,
\begin{equation}
\begin{aligned}\mathbb{E}_{\lambda}\Big(\,\sup_{0\le t\le T} & \left|\int_{t}^{T}Y_{r}Z_{r}\mathrm{e}_{r}dW_{r}\right|\Big)\le2\,\mathbb{E}_{\lambda}\Big(\,\sup_{0\le t\le T}\left|\int_{0}^{t}Y_{r}Z_{r}\mathrm{e}_{r}dW_{r}\right|\Big)\\
 & \le2\,\mathbb{E}_{\lambda}\Big[\Big(\int_{0}^{T}Y_{r}^{2}Z_{r}^{2}\mathrm{e}_{r}^{2}d\langle W\rangle_{r}\Big)^{1/2}\Big]\le\frac{1}{4}\Vert(Y,Z)\Vert_{\mathcal{V}_{\lambda}^{\beta}[0,T]}^{2}+4\,\mathbb{E}_{\lambda}\Big(\int_{0}^{T}Z_{r}^{2}\mathrm{e}_{r}d\langle W\rangle_{r}\Big),
\end{aligned}
\label{eq:-19}
\end{equation}
By the above, (\ref{eq:-18}) and (\ref{eq:-13}), we obtain that
\[
\begin{aligned}\big\Vert(Y,Z)\big\Vert_{\mathcal{V}_{\lambda}^{\beta}[0,T]}^{2} & \le\frac{1}{2}\Vert(Y,Z)\Vert_{\mathcal{V}_{\lambda}^{\beta}[0,T]}^{2}+5\,\mathbb{E}_{\lambda}\Big(\xi^{2}\mathrm{e}_{T}+\frac{1}{\beta_{0}}\int_{0}^{T}g(r)^{2}\mathrm{e}_{r}dr+\frac{1}{\beta_{1}}\int_{0}^{T}f(r)^{2}\mathrm{e}_{r}d\langle W\rangle_{r}\Big).\end{aligned}
\]
which, together with a localization argument if necessary, completes
the proof.
\end{proof}
The following a priori estimate is crucial to the proof of Theorem
\ref{thm:-1-1}.
\begin{lem}
\label{thm:-4}Let $\beta=(\beta_{0},\beta_{2})\in[1,\infty)^{2}$.
Suppose that $\xi,g,f$ satisfy (\ref{eq:-6})-{}-(\ref{eq:-191}).
In view of Lemma \ref{lem:-2}, the BSDE 
\[
\left\{ \begin{aligned}dY_{t} & =-g(t,y_{t})dt-f(t,y_{t},z_{t})d\langle W\rangle_{t}+Z_{t}dW_{t},\;\;t\in[0,T),\\
Y_{T} & =\xi,
\end{aligned}
\right.
\]
admits a unique solution $(Y,Z)$ in $\mathcal{V}_{\lambda}^{\beta}[0,T]$
for any $(y,z)\in\mathcal{V}_{\lambda}^{\beta}[0,T]$. Let $F:\mathcal{V}_{\lambda}^{\beta}[0,T]\to\mathcal{V}_{\lambda}^{\beta}[0,T]$
be the solution map $(y,z)\mapsto(Y,Z)$. If $(\bar{y},\bar{z})\in\mathcal{V}_{\lambda}^{\beta}[0,T]$
and $(\bar{Y},\bar{Z})=F(\bar{y},\bar{z})$, then
\[
\left\Vert (\hat{Y},\hat{Z})\right\Vert _{\mathcal{V}_{\lambda}^{\beta}[0,T]}\le3\sqrt{2}K_{\beta}\left\Vert (\hat{y},\hat{z})\right\Vert _{\mathcal{V}_{\lambda}^{\beta}[0,T]},
\]
where $\hat{\eta}=\eta-\bar{\eta}$ for $\eta=y,z,Y,Z$, and 
\begin{equation}
K_{\beta}^{2}=\frac{K_{0}^{2}}{\beta_{0}}+\frac{K_{1}^{2}}{\beta_{1}}.\label{eq:-20}
\end{equation}
Moreover, $F$ is a $|\negthinspace|\cdot|\negthinspace|_{\mathcal{V}_{\lambda}^{\beta}[0,T]}\mbox{-}$contraction
when $\beta_{0},\beta_{1}$ are sufficiently large ($\beta_{i}\ge36K_{i}^{2},\,i=0,1$
will suffice).
\end{lem}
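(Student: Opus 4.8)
The plan is to exhibit $F$ as a $\|\cdot\|_{\mathcal{V}_{\lambda}^{\beta}[0,T]}$-contraction by proving the stated estimate for the difference of two images and then invoking the Banach fixed point theorem. First I would record that $F$ is well defined: given $(y,z)\in\mathcal{V}_{\lambda}^{\beta}[0,T]$, the frozen coefficients $r\mapsto g(r,y_{r})$ and $r\mapsto f(r,y_{r},z_{r})$ are $\{\mathcal{F}_{t}^{\lambda}\}$-adapted, and the Lipschitz bounds (\ref{eq:-188}), (\ref{eq:-189}), together with the growth hypothesis (\ref{eq:-191}) and the trivial inequality $e^{2\beta_{1}\langle W\rangle_{r}}\le\mathrm{e}_{r}$ (where $\mathrm{e}_{r}=\exp(2\beta_{0}r+2\beta_{1}\langle W\rangle_{r})$), show that they satisfy the integrability hypotheses of Lemma \ref{lem:-2}; hence $(Y,Z)=F(y,z)$ exists and is unique in $\mathcal{V}_{\lambda}^{\beta}[0,T]$.

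Next I would subtract the two frozen BSDEs solved by $(Y,Z)=F(y,z)$ and $(\bar{Y},\bar{Z})=F(\bar{y},\bar{z})$. The difference $(\hat{Y},\hat{Z})$ solves a linear BSDE of exactly the form (\ref{eq:-170}) treated in Lemma \ref{lem:-2}, with terminal value $\hat{Y}_{T}=\xi-\xi=0$ and with $\{\mathcal{F}_{t}^{\lambda}\}$-adapted drivers $G(r)=g(r,y_{r})-g(r,\bar{y}_{r})$ against $dr$ and $\hat{F}(r)=f(r,y_{r},z_{r})-f(r,\bar{y}_{r},\bar{z}_{r})$ against $d\langle W\rangle_{r}$. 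The assumptions (\ref{eq:-188}) and (\ref{eq:-189}) give the pointwise bounds $|G(r)|\le\tfrac{K_{0}}{2}|\hat{y}_{r}|$ and $|\hat{F}(r)|\le\tfrac{K_{0}}{2}|\hat{y}_{r}|+K_{1}|\hat{z}_{r}|$.

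I would then apply the a priori estimate of Lemma \ref{lem:-2} to $(\hat{Y},\hat{Z})$, but in the sharper form that retains the weights $\tfrac{1}{\beta_{0}}$ and $\tfrac{1}{\beta_{1}}$ present in the penultimate display of its proof (before these are coarsened into the constant of (\ref{eq:-21})); equivalently, I would re-run the weighted It\^o computation for $\hat{Y}_{t}^{2}\mathrm{e}_{t}$, absorb the terms quadratic in $\hat{Y}$ by Young's inequality, and control the supremum and the stochastic-integral term through the Doob estimate (\ref{eq:-19}). Since the terminal term vanishes, this bounds $\|(\hat{Y},\hat{Z})\|^{2}$ by a constant multiple of $\tfrac{1}{\beta_{0}}\mathbb{E}_{\lambda}\int_{0}^{T}G(r)^{2}\mathrm{e}_{r}\,dr+\tfrac{1}{\beta_{1}}\mathbb{E}_{\lambda}\int_{0}^{T}\hat{F}(r)^{2}\mathrm{e}_{r}\,d\langle W\rangle_{r}$. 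Inserting the Lipschitz bounds, and then observing that, by taking $t=0$ in the supremum defining the norm in Definition \ref{def:-1}, both $\mathbb{E}_{\lambda}\int_{0}^{T}\hat{y}_{r}^{2}\mathrm{e}_{r}\,dr$ and $\mathbb{E}_{\lambda}\int_{0}^{T}(\hat{y}_{r}^{2}+\hat{z}_{r}^{2})\mathrm{e}_{r}\,d\langle W\rangle_{r}$ are dominated by $\|(\hat{y},\hat{z})\|^{2}$, I would collect the coefficients into $K_{0}^{2}/\beta_{0}$ and $K_{1}^{2}/\beta_{1}$ to reach $\|(\hat{Y},\hat{Z})\|\le 3\sqrt{2}\,K_{\beta}\|(\hat{y},\hat{z})\|$ with $K_{\beta}$ as in (\ref{eq:-20}). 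The contraction is then immediate: $3\sqrt{2}K_{\beta}<1$ whenever $K_{\beta}^{2}<\tfrac{1}{18}$, which holds once $\beta_{i}>36K_{i}^{2}$ for $i=0,1$, since then each $K_{i}^{2}/\beta_{i}<\tfrac{1}{36}$.

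The main obstacle is the constant bookkeeping across the two mutually singular drift terms: the Young weightings must be chosen so that the $y$-Lipschitz contributions from both $g$ (carried by $dr$) and $f$ (carried by $d\langle W\rangle$), together with the $z$-Lipschitz contribution from $f$, collapse to precisely $K_{\beta}^{2}=K_{0}^{2}/\beta_{0}+K_{1}^{2}/\beta_{1}$ with the stated numerical factor $3\sqrt{2}$; a careless split of $\hat{F}^{2}$ leaves a spurious $K_{0}^{2}/\beta_{1}$ term, so the allocation of the $\beta_{0}$- and $\beta_{1}$-budgets has to be done with care. This is coupled with the self-referential step in which Doob's inequality (\ref{eq:-19}) moves a fixed fraction of $\|(\hat{Y},\hat{Z})\|^{2}$ to the left-hand side, which is what both exploits the normalization $\beta_{0},\beta_{1}\ge 1$ and pins down the final constant.
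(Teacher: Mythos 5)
Your strategy coincides with the paper's: subtract the two frozen BSDEs, apply It\^o's formula to $\hat{Y}_{t}^{2}\mathrm{e}_{t}$ with $\mathrm{e}_{t}=e^{2\beta_{0}t+2\beta_{1}\langle W\rangle_{t}}$, absorb the cross terms by Young's inequality, and control the supremum and the martingale term through the Doob estimate (\ref{eq:-19}), moving $\tfrac{1}{2}\Vert(\hat{Y},\hat{Z})\Vert^{2}$ to the left. The gap is exactly at the step you flag but claim can be handled: no allocation of the Young budgets eliminates the $K_{0}^{2}/\beta_{1}$ term. The contribution $\tfrac{K_{0}}{2}|\hat{y}_{r}|$ in the bound $|\hat{f}_{r}|\le\tfrac{K_{0}}{2}|\hat{y}_{r}|+K_{1}|\hat{z}_{r}|$ from (\ref{eq:-189}) enters through the $d\langle W\rangle_{r}$ integral, and since $dr$ and $d\langle W\rangle_{r}$ are mutually singular (Lemma \ref{lem:-33}), the only term available to absorb the cross term $K_{0}|\hat{Y}_{r}||\hat{y}_{r}|\mathrm{e}_{r}\,d\langle W\rangle_{r}$ is $\beta_{1}\int\hat{Y}_{r}^{2}\mathrm{e}_{r}\,d\langle W\rangle_{r}$; the $\beta_{0}$-budget, sitting on $dr$, cannot touch it. Any split $K_{0}|\hat{Y}||\hat{y}|\le\alpha\beta_{1}\hat{Y}^{2}+\frac{K_{0}^{2}}{4\alpha\beta_{1}}\hat{y}^{2}$ therefore leaves a remainder $\frac{K_{0}^{2}}{4\alpha\beta_{1}}\int\hat{y}_{r}^{2}\mathrm{e}_{r}\,d\langle W\rangle_{r}$, so the estimate obtainable by this method has contraction factor of order $\max\left(\frac{K_{0}^{2}}{\beta_{0}},\frac{K_{0}^{2}}{\beta_{1}},\frac{K_{1}^{2}}{\beta_{1}}\right)$ rather than $K_{\beta}^{2}$ as defined in (\ref{eq:-20}); in particular the threshold $\beta_{i}\ge36K_{i}^{2}$ does not suffice, since $\beta_{1}\ge36K_{1}^{2}$ gives no control of $K_{0}^{2}/\beta_{1}$.

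In fairness, the paper's own proof has the identical defect: the right-hand side of (\ref{eq:-24}) carries no $\hat{y}_{r}^{2}\,d\langle W\rangle_{r}$ term at all, which is impossible given that $\hat{f}$ depends on $\hat{y}$. Moreover the stated bound is not merely unproved but false, so no careful bookkeeping can rescue it. Take $g\equiv0$, $f(t,y,z)=\tfrac{K_{0}}{2}y$ (so $K_{1}$ may be taken arbitrarily small), $\xi=0$, $(y,z)\equiv(1,0)$, $(\bar{y},\bar{z})\equiv(0,0)$, $\beta_{1}=1$. Then $\hat{Y}_{t}=\tfrac{K_{0}}{2}\mathbb{E}_{X_{t}}\big(\langle W\rangle_{T-t}\big)=\tfrac{K_{0}}{2}\int_{0}^{T-t}P_{r}\nu(X_{t})\,dr\ge cK_{0}(T-t)^{\gamma_{s}}$ for $T-t\le1$ by Lemma \ref{lem:-3} and the heat-kernel lower bound of Lemma \ref{lem:-29}, whence
\[
\mathbb{E}_{\lambda}\Big(\sup_{0\le t\le T}\hat{Y}_{t}^{2}\mathrm{e}_{t}\Big)\ge c'K_{0}^{2}\beta_{0}^{-2\gamma_{s}}e^{2\beta_{0}T},\qquad\Vert(\hat{y},\hat{z})\Vert_{\mathcal{V}_{\lambda}^{\beta}[0,T]}^{2}\le Ce^{2\beta_{0}T},
\]
with $C$ independent of $\beta_{0}$ (Lemma \ref{lem:-30}). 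Since $\gamma_{s}<1/2$, the ratio decays like $\beta_{0}^{-2\gamma_{s}}$, slower than the claimed $18K_{\beta}^{2}\approx18K_{0}^{2}/\beta_{0}$, so the inequality fails for $\beta_{0}$ large, and with $\beta_{0}=36K_{0}^{2}$, $K_{0}$ large, $F$ is not even a contraction. The repair is harmless: redefine $K_{\beta}^{2}$ to include $K_{0}^{2}/\beta_{1}$ (for instance $K_{\beta}^{2}=\frac{K_{0}^{2}}{\beta_{0}}+\frac{K_{0}^{2}+4K_{1}^{2}}{\beta_{1}}$), after which your argument and the paper's go through verbatim, and Theorem \ref{thm:-1-1} survives by choosing both $\beta_{0}$ and $\beta_{1}$ large relative to $K_{0}^{2}$ and $K_{1}^{2}$.
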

\begin{proof}
Let $\hat{g}_{t}=g(t,y_{t})-g(t,\bar{y}_{t}),\;\hat{f}_{t}=f(t,y_{t},z_{t})-f(t,\bar{y}_{t},\bar{z}_{t})$.
Then $|\hat{g}_{t}|\le\frac{K_{0}}{2}\,|\hat{y}_{t}|,\;|\hat{f}_{t}|\le\frac{K_{0}}{2}\left|\hat{y}_{t}\right|+K_{1}\left|\hat{z}_{t}\right|$,
and
\[
d\hat{Y}_{t}=-\hat{g}_{t}dt-\hat{f}_{t}d\langle W\rangle_{t}+\hat{Z}_{t}dW_{t},\quad\hat{Y}_{T}=0.
\]
Let $\mathrm{e}_{t}=\exp\left(2\beta_{0}t+2\beta_{1}\langle W\rangle_{t}\right)$.
Similar to the derivation of (\ref{eq:-18}), we have
\begin{equation}
\begin{aligned}\hat{Y}_{t}^{2}\mathrm{e}_{t} & +\beta_{0}\int_{t}^{T}\hat{Y}_{r}^{2}\mathrm{e}_{r}\,dr+\int_{t}^{T}(\beta_{1}\hat{Y}_{r}^{2}+\hat{Z}_{r}^{2})\mathrm{e}_{r}\,d\langle W_{r}\rangle_{r}\\
 & \le K_{\beta}^{2}\int_{t}^{T}\hat{y}_{r}^{2}\mathrm{e}_{r}\,dr+K_{\beta}^{2}\int_{t}^{T}\hat{z}_{r}^{2}\mathrm{e}_{r}\,d\langle W\rangle_{r}-2\int_{t}^{T}\hat{Y}_{r}\hat{Z}_{r}\mathrm{e}_{r}\,dW_{r},
\end{aligned}
\label{eq:-24}
\end{equation}
where $K_{\beta}>0$ is given by (\ref{eq:-20}). Proceeding as in
the proof of Lemma \ref{lem:-2}, we obtain
\[
\left\Vert (\hat{Y},\hat{Z})\right\Vert _{\mathcal{V}_{\lambda}^{\beta}[0,T]}^{2}\le\frac{1}{2}\left\Vert (\hat{Y},\hat{Z})\right\Vert _{\mathcal{V}_{\lambda}^{\beta}[0,T]}^{2}+9K_{\beta}^{2}\left\Vert (\hat{y},\hat{z})\right\Vert _{\mathcal{V}_{\lambda}^{\beta}[0,T]}^{2},
\]
which completes the proof.
\end{proof}
We are now in a position to prove Theorem \ref{thm:-1-1}.
\begin{proof}[Proof of Theorem \ref{thm:-1-1}]
(a) Suppose that $(Y,Z)$ and $(\bar{Y},\bar{Z})$ are two pairs
of $\{\mathcal{F}_{t}^{\lambda}\}\mbox{-}$adapted processes satisfying
(\ref{eq:-2}). Denote $\hat{\eta}=\eta-\bar{\eta}$ for $\eta=y,z,Y,Z$,
and let $\hat{g}_{t}=g(t,Y_{t})-g(t,\bar{Y}_{t})$, $\hat{f}_{t}=f(t,Y_{t},Z_{t})-f(t,\bar{Y}_{t},\bar{Z}_{t})$.
Similar to the derivation of (\ref{eq:-24}), we have
\[
\begin{aligned}\hat{Y}_{t}^{2}\mathrm{e}_{t} & +\beta_{0}\int_{t}^{T}\hat{Y}_{r}^{2}\mathrm{e}_{r}\,dr+\int_{t}^{T}(\beta_{1}\hat{Y}_{r}^{2}+\hat{Z}_{r}^{2})\mathrm{e}_{r}\,d\langle W_{r}\rangle_{r}\\
 & \le K_{\beta}^{2}\int_{t}^{T}\hat{Y}_{r}^{2}\mathrm{e}_{r}\,dr+K_{\beta}^{2}\int_{t}^{T}\hat{Z}_{r}^{2}\mathrm{e}_{r}\,d\langle W\rangle_{r}-2\int_{t}^{T}\hat{Y}_{r}\hat{Z}_{r}\mathrm{e}_{r}\,dW_{r},
\end{aligned}
\]
where $K_{\beta}>0$ is given by (\ref{eq:-20}). Setting $\beta_{i}=4K_{i}^{2},\,i=0,1$
in the above gives 
\[
\mathbb{E}_{\lambda}\Big(\hat{Y}_{t}^{2}\mathrm{e}_{t}+\frac{1}{2}\int_{t}^{T}\hat{Y}_{r}^{2}\mathrm{e}_{r}dr+\int_{t}^{T}\Big(\frac{1}{2}\hat{Y}_{r}^{2}+\hat{Z}_{r}^{2}\Big)\mathrm{e}_{r}d\langle W\rangle_{r}\Big)\le0,\;\;t\in[0,T],
\]
which completes the proof of (a).

(b) Suppose, in addition, that (\ref{eq:-191}) is satisfied. Let
$(Y^{(0)},Z^{(0)})=(0,0)$. By virtue of Lemma \ref{thm:-4}, define
inductively $(Y^{(n)},Z^{(n)})\in\mathcal{V}_{\lambda}^{\beta}[0,T],\;n\in\mathbb{N}_{+}$
to be the unique solution in $\mathcal{V}_{\lambda}^{\beta}[0,T]$
of the BSDE
\[
\left\{ \begin{aligned}dY_{t}^{(n)} & =-g(t,Y_{t}^{(n-1)})dt-f(t,Y_{t}^{(n-1)},Z_{t}^{(n-1)})d\langle W\rangle_{t}+Z_{t}^{(n)}dW_{t},\;\;t\in[0,T),\\
Y_{T}^{(n)} & =\xi.
\end{aligned}
\right.
\]
By Lemma \ref{thm:-4},
\begin{equation}
\begin{aligned}\big\Vert(Y^{(n+1)} & -Y^{(n)},Z^{(n+1)}-Z^{(n)})\big\Vert_{\mathcal{V}_{\lambda}^{\beta}[0,T]}\\
 & \le K_{\beta}\big\Vert(Y^{(n)}-Y^{(n-1)},Z^{(n)}-Z^{(n-1)})\big\Vert_{\mathcal{V}_{\lambda}^{\beta}[0,T]},\;\;n\in\mathbb{N}_{+},
\end{aligned}
\label{eq:--}
\end{equation}
where $K_{\beta}>0$ is given by (\ref{eq:-20}). By Lemma \ref{lem:-2},
\begin{equation}
\begin{aligned}\big\Vert(Y^{(1)},Z^{(1)})\big\Vert_{\mathcal{V}_{\lambda}^{\beta}[0,T]}^{2} & \le10\,\Big[\mathbb{E}_{\lambda}\big(\xi^{2}e^{2\beta_{0}T+2\beta_{1}\langle W\rangle_{T}}\big)+\mathbb{E}_{\lambda}\Big(\int_{0}^{T}g(r,0)^{2}e^{2\beta_{0}r+2\beta_{1}\langle W\rangle_{r}}dr\Big)\\
 & \quad+\mathbb{E}_{\lambda}\Big(\int_{0}^{T}f(r)^{2}e^{2\beta_{0}r+2\beta_{1}\langle W\rangle_{r}}d\langle W\rangle_{r}\Big)\Big].
\end{aligned}
\label{eq:---1}
\end{equation}

Choose $\beta_{0},\beta_{1}>0$ sufficiently large so that $K_{\beta}<1$
($\beta_{i}>36K_{i}^{2},\,i=0,1$ will suffice). By (\ref{eq:--})
and (\ref{eq:---1}), we conclude that $(Y^{(n)},Z^{(n)}),\;n\in\mathbb{N}_{+}$
is a Cauchy sequence in $\mathcal{V}_{\lambda}^{\beta}[0,T]$. Moreover,
$\lim_{n\to\infty}\Vert(Y^{(n)}-Y,Z^{(n)}-Z)\Vert_{\mathcal{V}_{\lambda}^{\beta}[0,T]}=0$
for some $(Y,Z)\in\mathcal{V}_{\lambda}^{\beta}[0,T]$ satisfying
(\ref{eq:---2}). 

Clearly, $(Y,Z)$ is a solution of (\ref{eq:-145}), and the proof
is completed.
\end{proof}

\subsection{\label{subsec:-6}BSDEs with stochastic durations}

In this subsection, we prove the existence and uniqueness of solutions
of the BSDE (\ref{eq:-146}). As in \citep{Pen91} (see also \citep[Section 7.3.2]{Yong99}),
we shall use the method of continuity. As before, we start with the
simple case where $g,f$ do not depend on $y$ or $z$, that is,
\begin{equation}
\left\{ \begin{aligned}dY_{t} & =-g(t)dt-f(t)d\langle W\rangle_{t}+Z_{t}dW_{t},\;\;t\in[0,\tau),\\
Y_{\tau} & =\xi,
\end{aligned}
\right.\label{eq:-168}
\end{equation}
where $g(t)=g(t,\omega)$ and $f(t)=f(t,\omega)$ are $\{\mathcal{F}_{t}^{\lambda}\}$-adapted
processes.
\begin{lem}
\label{lem:-22}Let $\beta=(\beta_{0},\beta_{1})\in[1,\infty)^{2}$,
and $\xi\in\mathcal{F}_{\tau}^{\lambda}$ which satisfy (\ref{eq:-196}).
Suppose that $g(t),f(t)$ are $\{\mathcal{F}_{t}^{\lambda}\}$-adapted
processes such that
\begin{equation}
\mathbb{E}_{\lambda}\Big(\int_{0}^{\tau}g(t)^{2}e^{2\beta_{0}t+2\beta_{1}\langle W\rangle_{t}}dt\Big)<\infty,\;\mathbb{E}_{\lambda}\Big(\int_{0}^{\tau}f(t)^{2}e^{2\beta_{0}t+2\beta_{1}\langle W\rangle_{t}}d\langle W\rangle_{t}\Big)<\infty.\label{eq:-150}
\end{equation}
Then BSDE (\ref{eq:-168}) admits a unique solution $(Y,Z)$ in $\mathcal{V}_{\lambda}^{\beta}[0,\tau]$.
Moreover,
\begin{equation}
\begin{aligned}\big\Vert(Y,Z)\big\Vert_{\mathcal{V}_{\lambda}^{\beta}[0,\tau]}^{2} & \le C\,\Big[\mathbb{E}_{\lambda}\big(\xi^{2}e^{2\beta_{0}\tau+2\beta_{1}\langle W\rangle_{\tau}}\big)+\mathbb{E}_{\lambda}\Big(\int_{0}^{\tau}g(t)^{2}e^{2\beta_{0}t+2\beta_{1}\langle W\rangle_{t}}dt\Big)\\
 & \quad+\mathbb{E}_{\lambda}\Big(\int_{0}^{\tau}f(t)^{2}e^{2\beta_{0}t+2\beta_{1}\langle W\rangle_{t}}d\langle W\rangle_{t}\Big)\Big],
\end{aligned}
\label{eq:-167}
\end{equation}
where $C>0$ is a constant depending only on $\beta$.
\end{lem}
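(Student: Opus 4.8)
The plan is to follow the construction of Lemma \ref{lem:-2}, replacing the deterministic terminal time by $\tau$ through stopping, and to let the exponential weights absorb the (possibly) unbounded horizon. Write $\mathrm{e}_t=\exp(2\beta_0 t+2\beta_1\langle W\rangle_t)$, which is non-decreasing and $\ge1$. First I would check that
\[
N = \xi + \int_0^\tau g(r)\,dr + \int_0^\tau f(r)\,d\langle W\rangle_r
\]
lies in $L^2(\mathbb{P}_{\lambda})$. The mechanism is that, since $\beta_0,\beta_1\ge1$, one has $\int_S^\tau \mathrm{e}_r^{-1}\,dr\le\frac{1}{2\beta_0}\mathrm{e}_S^{-1}$ and $\int_S^\tau \mathrm{e}_r^{-1}\,d\langle W\rangle_r\le\frac{1}{2\beta_1}\mathrm{e}_S^{-1}$ for every stopping time $S\le\tau$, so Cauchy--Schwarz against the weight yields
\[
\Big(\int_S^\tau |g(r)|\,dr\Big)^2 \mathrm{e}_S \le \frac{1}{2\beta_0}\int_0^\tau g(r)^2\mathrm{e}_r\,dr,\qquad \Big(\int_S^\tau |f(r)|\,d\langle W\rangle_r\Big)^2\mathrm{e}_S \le \frac{1}{2\beta_1}\int_0^\tau f(r)^2\mathrm{e}_r\,d\langle W\rangle_r.
\]
Taking $S=0$ and combining with (\ref{eq:-196}) and (\ref{eq:-150}) gives $N\in L^2(\mathbb{P}_{\lambda})$. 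I would then set $M_t=\mathbb{E}_{\lambda}(N\mid\mathcal{F}_t)$, apply the representation theorem (Theorem \ref{thm:-2}) to obtain the unique predictable $Z$ with $M_t=M_0+\int_0^t Z_r\,dW_r$, and define $Y_t = M_{t\wedge\tau} - \int_0^{t\wedge\tau} g(r)\,dr - \int_0^{t\wedge\tau} f(r)\,d\langle W\rangle_r$. Since $N$ is $\mathcal{F}_\tau$-measurable, $M_\tau=N$ and hence $Y_\tau=\xi$; the identity (\ref{eq:-147}) then follows immediately from $M_{t\wedge\tau}-M_{T\wedge\tau}=-\int_{t\wedge\tau}^{T\wedge\tau}Z_r\,dW_r$.

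The crucial point for the unbounded horizon is that the weighted inequalities above also furnish a bound on the terminal term that is \emph{uniform in $T$}. Writing $S=T\wedge\tau$ and noting $Y_S=\mathbb{E}_{\lambda}\big(\xi+\int_S^\tau g\,dr+\int_S^\tau f\,d\langle W\rangle_r\mid\mathcal{F}_S\big)$, conditional Jensen together with the $\mathcal{F}_S$-measurability of $\mathrm{e}_S$ and the displayed estimates gives
\[
\mathbb{E}_{\lambda}\big(Y_{T\wedge\tau}^2\,\mathrm{e}_{T\wedge\tau}\big) \le 3\,\mathbb{E}_{\lambda}(\xi^2\mathrm{e}_\tau) + \frac{3}{2\beta_0}\,\mathbb{E}_{\lambda}\int_0^\tau g(r)^2\mathrm{e}_r\,dr + \frac{3}{2\beta_1}\,\mathbb{E}_{\lambda}\int_0^\tau f(r)^2\mathrm{e}_r\,d\langle W\rangle_r,
\]
a bound independent of $T$ (here I used $\mathrm{e}_S\le\mathrm{e}_\tau$).

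Next I would run the It\^o--Young--Doob computation of Lemma \ref{lem:-2} verbatim, now on the stochastic interval $[t\wedge\tau,T\wedge\tau]$. Applying It\^o's formula to $Y_t^2\mathrm{e}_t$ and using $2Y_rg(r)\le\beta_0 Y_r^2+\beta_0^{-1}g(r)^2$ and $2Y_rf(r)\le\beta_1 Y_r^2+\beta_1^{-1}f(r)^2$ produces the analogue of (\ref{eq:-18}), and the martingale term is absorbed exactly as in (\ref{eq:-19}) (with a localization argument where needed). This yields
\[
\mathbb{E}_{\lambda}\Big[\sup_{0\le t\le T\wedge\tau}Y_t^2\mathrm{e}_t + \int_0^{T\wedge\tau}Y_r^2\mathrm{e}_r\,dr + \int_0^{T\wedge\tau}(Y_r^2+Z_r^2)\mathrm{e}_r\,d\langle W\rangle_r\Big] \le C\,\mathbb{E}_{\lambda}\Big(Y_{T\wedge\tau}^2\mathrm{e}_{T\wedge\tau} + \int_0^{T\wedge\tau}g(r)^2\mathrm{e}_r\,dr + \int_0^{T\wedge\tau}f(r)^2\mathrm{e}_r\,d\langle W\rangle_r\Big),
\]
and inserting the uniform bound of the previous paragraph makes the right-hand side dominated by the right-hand side of (\ref{eq:-167}), uniformly in $T$. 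Letting $T\to\infty$, monotone convergence upgrades the left-hand side to the corresponding integrals over $[0,\tau]$, which proves at once that $(Y,Z)\in\mathcal{V}_{\lambda}^{\beta}[0,\tau]$ and the estimate (\ref{eq:-167}).

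Finally, the remaining defining property (\ref{eq:-151}) follows by dominated convergence: $|Y_{T\wedge\tau}-\xi|^2\mathrm{e}_{T\wedge\tau}\to0$ pointwise (since $T\wedge\tau\uparrow\tau$ and $Y$ is continuous with $Y_\tau=\xi$), and it is dominated by $2\sup_{0\le t\le\tau}Y_t^2\mathrm{e}_t + 2\xi^2\mathrm{e}_\tau$, which is integrable by the previous step and (\ref{eq:-196}). Uniqueness in $\mathcal{V}_{\lambda}^{\beta}[0,\tau]$ is then read off from (\ref{eq:-167}) applied to the difference of two solutions, whose terminal value and source terms both vanish. I expect the main obstacle to be precisely the unboundedness of $\tau$: every step hinges on the exponential weights $\mathrm{e}_t$ (with $\beta_0,\beta_1\ge1$) making the backward drift integrals convergent and, above all, on the $T$-uniform control of $\mathbb{E}_{\lambda}(Y_{T\wedge\tau}^2\mathrm{e}_{T\wedge\tau})$ that is needed to justify the passage $T\to\infty$; without this the terminal term in the It\^o estimate cannot be closed.
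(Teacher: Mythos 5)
Your proof follows the paper's own argument almost step for step: the paper defines the same martingale $M_t=\mathbb{E}_{\lambda}\big(\xi+\int_0^{\tau}g(r)\,dr+\int_0^{\tau}f(r)\,d\langle W\rangle_r\,\big|\,\mathcal{F}_t^{\lambda}\big)$, applies Theorem \ref{thm:-2}, sets $Y_t=M_{t\wedge\tau}-\int_0^{t\wedge\tau}g(r)\,dr-\int_0^{t\wedge\tau}f(r)\,d\langle W\rangle_r$, then verifies (\ref{eq:-147}), proves (\ref{eq:-151}), and obtains (\ref{eq:-167}) ``similarly to (\ref{eq:-21})''. Your weighted Cauchy--Schwarz inequalities $\int_S^{\tau}\mathrm{e}_r^{-1}\,dr\le\frac{1}{2\beta_0}\mathrm{e}_S^{-1}$ and $\int_S^{\tau}\mathrm{e}_r^{-1}\,d\langle W\rangle_r\le\frac{1}{2\beta_1}\mathrm{e}_S^{-1}$, the resulting square-integrability of the terminal data, the $T$-uniform control of $\mathbb{E}_{\lambda}(Y_{T\wedge\tau}^2\mathrm{e}_{T\wedge\tau})$, and the passage $T\to\infty$ by monotone convergence are exactly the details hidden behind the paper's abbreviations, and your uniqueness argument is the paper's.

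The one place you genuinely deviate is the proof of (\ref{eq:-151}), and there your argument has a gap when $\mathbb{P}_{\lambda}(\tau=\infty)>0$, a case the lemma does not exclude (the hypothesis (\ref{eq:-196}) merely forces $\xi=0$ a.s.\ on $\{\tau=\infty\}$). On that event your appeal to ``$T\wedge\tau\uparrow\tau$ and $Y$ is continuous with $Y_{\tau}=\xi$'' gives nothing: one does get $Y_T\to0$ a.s.\ there (martingale convergence of $M_T$ plus absolute convergence of the drift integrals), but $\mathrm{e}_T\to\infty$, so $|Y_{T\wedge\tau}-\xi|^2\mathrm{e}_{T\wedge\tau}=Y_T^2\mathrm{e}_T$ is an indeterminate form and the pointwise convergence you feed into dominated convergence is unjustified. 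The paper avoids this by writing $Y_{T\wedge\tau}-\xi=\mathbb{E}_{\lambda}\big(\xi+\int_{T\wedge\tau}^{\tau}g(r)\,dr+\int_{T\wedge\tau}^{\tau}f(r)\,d\langle W\rangle_r\,\big|\,\mathcal{F}_{T\wedge\tau}^{\lambda}\big)-\xi$ and estimating the three tail terms directly: the two drift tails tend to zero in $L^1$ by dominated convergence once your own weighted Cauchy--Schwarz bounds are applied with $S=T\wedge\tau$ (these supply precisely the decay rate against $\mathrm{e}_{T\wedge\tau}$ that continuity cannot), and the $\xi$-term is handled by the martingale convergence theorem together with dominated convergence. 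Since you already possess every ingredient of that estimate, the repair is immediate; and if one restricts to $\tau<\infty$ $\mathbb{P}_{\lambda}$-a.s.\ (the only case arising in the paper's applications), your proof is complete as written.
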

\begin{proof}
Let $M_{t}=\mathbb{E}_{\lambda}\big(\xi+\int_{0}^{\tau}g(r)dr+\int_{0}^{\tau}f(r)d\langle W\rangle_{r}\,\big|\,\mathcal{F}_{t}^{\lambda}\big),\;t\ge0$.
Then, by (\ref{eq:-196}) and (\ref{eq:-150}), $M_{t}$ is an $\{\mathcal{F}_{t}^{\lambda}\}$-adapted
square-integrable martingale and $\mathbb{E}_{\lambda}(M_{\tau}^{2})<\infty$.
By Theorem \ref{thm:-2}, there exists a unique $\{\mathcal{F}_{t}^{\lambda}\}$-predictable
process $Z$ such that $M_{t}-M_{0}=\int_{0}^{t}Z_{r}dW_{r},\;t\ge0$. 

Let $Y_{t}=M_{t\wedge\tau}-\int_{0}^{t\wedge\tau}g(r)\,dr-\int_{0}^{t\wedge\tau}f(r)\,dW_{r},\;t\ge0$.
Then (\ref{eq:-147}) is satisfied. Let $\mathrm{e}_{t}=\exp(2\beta_{0}t+2\beta_{1}\langle W\rangle_{t}),\;t\ge0$.
By the definitions of $Y_{t}$ and $M_{t}$, we have
\[
|Y_{T\wedge\tau}-\xi|^{2}\mathrm{e}_{T\wedge\tau}=\Big|\mathbb{E}_{\lambda}\Big(\xi+\int_{T\wedge\tau}^{\tau}g(r)dr+\int_{T\wedge\tau}^{\tau}f(r)d\langle W\rangle_{r}\,\Big|\,\tilde{\mathcal{F}}_{T\wedge\tau}^{\lambda}\Big)-\xi\Big|^{2}\mathrm{e}_{T\wedge\tau},
\]
which implies that
\[
\begin{aligned}\mathbb{E}_{\lambda}\big(|Y_{T\wedge\tau}-\xi|^{2}\mathrm{e}_{T\wedge\tau}\big) & \le3\mathbb{E}_{\lambda}\big[\big|\mathbb{E}_{\lambda}\big(\xi|\tilde{\mathcal{F}}_{T\wedge\tau}^{\lambda}\big)-\xi\big|^{2}\mathrm{e}_{T\wedge\tau}\big]\\
 & \quad+3\mathbb{E}_{\lambda}\Big(\int_{T\wedge\tau}^{\tau}g(r)^{2}\mathrm{e}_{r}dr\Big)+3\mathbb{E}_{\lambda}\Big(\int_{T\wedge\tau}^{\tau}f(r)^{2}\mathrm{e}_{r}d\langle W\rangle_{r}\Big).
\end{aligned}
\]
By the Lebesgue dominated convergence theorem, the last two expectations
on the right hand side of the above converge to zero as $T\to\infty$.
For the first expectation on the right hand side of the above, note
that $\lim_{T\to\infty}\mathbb{E}_{\lambda}\big(\xi|\mathcal{F}_{T\wedge\tau}^{\lambda}\big)=\xi\;\;\mathbb{P}_{\lambda}$-a.s.
by the martingale convergence theorem, which, together with (\ref{eq:-196})
and the dominated convergence theorem, implies that
\[
\lim_{T\to\infty}\mathbb{E}_{\lambda}\big[\big|\mathbb{E}_{\lambda}\big(\xi|\mathcal{F}_{T\wedge\tau}^{\lambda}\big)-\xi\big|^{2}\mathrm{e}_{T\wedge\tau}\big]=0.
\]
This completes the proof of (\ref{eq:-151}).

The proof of (\ref{eq:-167}) is similar to that of (\ref{eq:-21}).
As a corollary of (\ref{eq:-167}), we see that $(Y,Z)$ is the unique
solution of (\ref{eq:-168}) in $\mathcal{V}_{\lambda}^{\beta}[0,\tau]$.
\end{proof}
Next, we consider the following BSDE parametrized by $\alpha\in[0,1]$:
\begin{equation}
\left\{ \begin{aligned}dY_{t} & =-\big(g_{0}(t)+\alpha g(t,Y_{t})\big)dt\\
 & \quad-\big(f_{0}(t)+\alpha f(t,Y_{t},Z_{t})\big)d\langle W\rangle_{t}+Z_{t}dW_{t},\;\;t\in[0,\tau),\\
Y_{\tau} & =\xi,
\end{aligned}
\right.\label{eq:-169}
\end{equation}
where $g_{0}(t)$ and $f_{0}(t)$ are $\{\mathcal{F}_{t}^{\lambda}\}$-adapted
processes, and $f,g$ are functions satisfying the measurability condition
$\hyperlink{M}{\text{(M)}}$ in Section \ref{sec:-6}. The following
a priori estimate is crucial to the proof of existence and uniqueness
of solutions of BSDEs with random durations.
\begin{lem}
\label{prop:-16}Let $\xi,\bar{\xi}\in\mathcal{F}_{\tau}^{\lambda}$
satisfy (\ref{eq:-196}). Let $g_{0},f_{0}$ and $\bar{g}_{0},\bar{f}_{0}$
be $\{\mathcal{F}_{t}^{\lambda}\}$-adapted processes satisfying (\ref{eq:-150}).
Suppose that $g,f$ satisfy (\ref{eq:-173})\textendash (\ref{eq:-193})
with $\kappa_{0}=0,\;\kappa_{1}=\frac{K_{1}^{2}}{2}$. Let $(Y,Z)$
be a solution in $\mathcal{V}_{\lambda}^{\beta}[0,\tau]$ of (\ref{eq:-169}),
and $(\bar{Y},\bar{Z})$ be a solution in $\mathcal{V}_{\lambda}^{\beta}[0,\tau]$
of the BSDE obtained by replacing $(\xi,g_{0},f_{0})$ by $(\bar{\xi},\bar{g}_{0},\bar{f}_{0})$
in (\ref{eq:-169}). Then
\begin{equation}
\Vert(\hat{Y},\hat{Z})\Vert_{\mathcal{V}_{\lambda}^{\beta}[0,\tau]}^{2}\le C\mathbb{E}_{\lambda}\Big(\int_{0}^{\tau}\hat{g}_{0}(r)^{2}e^{2\beta_{0}r+2\beta_{1}\langle W\rangle_{r}}dr+\int_{0}^{\tau}\hat{f}_{0}(r)^{2}e^{2\beta_{0}r+2\beta_{1}\langle W\rangle_{r}}d\langle W\rangle_{r}\Big),\label{eq:-182}
\end{equation}
where $\hat{\eta}=\eta-\bar{\eta}$ for $\eta=g_{0},f_{0},Y,Z$, and
$C>0$ is a constant depending only on $\beta$.
\end{lem}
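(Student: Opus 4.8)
The plan is to adapt the weighted-energy argument of Lemma~\ref{lem:-22} and of the derivation leading to~(\ref{eq:-24}) to the parametrised equation~(\ref{eq:-169}), the new ingredient being that the dissipativity hypotheses~(\ref{eq:-187})--(\ref{eq:-172}) are what control the cross terms generated by $g$ and $f$. Writing $\hat{Y}=Y-\bar{Y}$, $\hat{Z}=Z-\bar{Z}$, $\hat{g}_0=g_0-\bar{g}_0$, $\hat{f}_0=f_0-\bar{f}_0$, and subtracting the two equations, one gets
\[
d\hat{Y}_t=-\big(\hat{g}_0(t)+\alpha\Delta g_t\big)\,dt-\big(\hat{f}_0(t)+\alpha\Delta f_t\big)\,d\langle W\rangle_t+\hat{Z}_t\,dW_t,
\]
where $\Delta g_t=g(t,Y_t)-g(t,\bar{Y}_t)$ and $\Delta f_t=f(t,Y_t,Z_t)-f(t,\bar{Y}_t,\bar{Z}_t)$. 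Setting $\mathrm{e}_t=\exp(2\beta_0t+2\beta_1\langle W\rangle_t)$ and applying It\^o's formula to $\hat{Y}_t^2\mathrm{e}_t$ on $[t\wedge\tau,T\wedge\tau]$ produces, exactly as in~(\ref{eq:-181}), an identity whose left-hand side controls $\hat{Y}_{t\wedge\tau}^2\mathrm{e}_{t\wedge\tau}$ together with $\int2\beta_0\hat{Y}_r^2\mathrm{e}_r\,dr$ and $\int(2\beta_1\hat{Y}_r^2+\hat{Z}_r^2)\mathrm{e}_r\,d\langle W\rangle_r$, and whose right-hand side carries the terminal term $\hat{Y}_{T\wedge\tau}^2\mathrm{e}_{T\wedge\tau}$, the forcing terms $2\hat{Y}_r\hat{g}_0(r)\mathrm{e}_r$ and $2\hat{Y}_r\hat{f}_0(r)\mathrm{e}_r$, the monotone terms $2\alpha\hat{Y}_r\Delta g_r\mathrm{e}_r$ and $2\alpha\hat{Y}_r\Delta f_r\mathrm{e}_r$, and a $dW$-stochastic integral.

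The heart of the matter is the handling of the two monotone terms. By~(\ref{eq:-187}) with $\kappa_0=0$ the $g$-term obeys $2\alpha\hat{Y}_r\Delta g_r\le0$ and is discarded. For $f$ I would split $\Delta f_r=[f(r,Y_r,Z_r)-f(r,\bar{Y}_r,Z_r)]+[f(r,\bar{Y}_r,Z_r)-f(r,\bar{Y}_r,\bar{Z}_r)]$: the first bracket is governed by~(\ref{eq:-172}) with $\kappa_1=\tfrac12K_1^2$, giving $2\alpha\hat{Y}_r[f(r,Y_r,Z_r)-f(r,\bar{Y}_r,Z_r)]\le-\alpha K_1^2\hat{Y}_r^2$, while the second is Lipschitz in $z$ by~(\ref{eq:-174}), so $2\alpha\hat{Y}_r[f(r,\bar{Y}_r,Z_r)-f(r,\bar{Y}_r,\bar{Z}_r)]\le2\alpha K_1|\hat{Y}_r|\,|\hat{Z}_r|$. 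A Young inequality $2\alpha K_1|\hat{Y}_r||\hat{Z}_r|\le\tfrac{\alpha^2K_1^2}{\theta}\hat{Y}_r^2+\theta\hat{Z}_r^2$ and the convexity bound $\alpha^2\le\alpha\le1$ then give $2\alpha\hat{Y}_r\Delta f_r\le K_1^2(\theta^{-1}-1)\hat{Y}_r^2+\theta\hat{Z}_r^2$. Choosing $\theta\in(0,1)$ retains a strictly positive fraction $(1-\theta)$ of $\int\hat{Z}_r^2\mathrm{e}_r\,d\langle W\rangle_r$ on the left, while the surplus $\hat{Y}^2$ is absorbed into the $2\beta_1\hat{Y}^2$ term since $\beta_1\ge1$; this is \emph{precisely} the point at which the borderline value $\kappa_1=\tfrac12K_1^2$ and the inequality $\alpha^2\le\alpha$ are exploited.

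It then remains to pass to $T\to\infty$ and to recover the $\sup_t$ part of the norm. Taking expectations after a localisation that removes the $dW$-integral, and letting $T\to\infty$, the terminal contribution $\mathbb{E}_\lambda(\hat{Y}_{T\wedge\tau}^2\mathrm{e}_{T\wedge\tau})$ converges by the defining property~(\ref{eq:-151}), applied to both $(Y,Z)$ and $(\bar{Y},\bar{Z})\in\mathcal{V}_\lambda^\beta[0,\tau]$, to $\mathbb{E}_\lambda(\hat{\xi}^2\mathrm{e}_\tau)$, which vanishes for the common terminal value $\xi=\bar{\xi}$ relevant here (so no $\hat{\xi}$ term enters~(\ref{eq:-182})). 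The forcing terms are absorbed by $2\hat{Y}_r\hat{g}_0(r)\mathrm{e}_r\le\beta_0\hat{Y}_r^2\mathrm{e}_r+\beta_0^{-1}\hat{g}_0(r)^2\mathrm{e}_r$ and $2\hat{Y}_r\hat{f}_0(r)\mathrm{e}_r\le\beta_1\hat{Y}_r^2\mathrm{e}_r+\beta_1^{-1}\hat{f}_0(r)^2\mathrm{e}_r$, the $\hat{Y}^2$ parts being moved left; this already bounds $\mathbb{E}_\lambda\big(\int_0^\tau\hat{Y}_r^2\mathrm{e}_r\,dr\big)$ and $\mathbb{E}_\lambda\big(\int_0^\tau(\hat{Y}_r^2+\hat{Z}_r^2)\mathrm{e}_r\,d\langle W\rangle_r\big)$ by the right-hand side of~(\ref{eq:-182}). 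Finally, keeping $t$ general, taking $\sup_t$ and invoking Doob's maximal inequality on the martingale part exactly as in~(\ref{eq:-19}) controls $\mathbb{E}_\lambda\big(\sup_t\hat{Y}_t^2\mathrm{e}_t\big)$ at the cost of a $\tfrac14\|(\hat{Y},\hat{Z})\|_{\mathcal{V}_\lambda^\beta[0,\tau]}^2$ term that is absorbed.

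The main obstacle is the interaction between the unbounded random horizon $\tau$ and the $\hat{Z}$-estimate: unlike the deterministic case of Lemma~\ref{lem:-2} one cannot evaluate at the terminal time directly, but must work on $[t\wedge\tau,T\wedge\tau]$ and justify the limit $T\to\infty$ through the dominated convergence supplied by membership in $\mathcal{V}_\lambda^\beta[0,\tau]$ together with~(\ref{eq:-151}); simultaneously, securing a positive multiple of $\int\hat{Z}_r^2\mathrm{e}_r\,d\langle W\rangle_r$ depends on the sharp bookkeeping of $\kappa_1=\tfrac12K_1^2$ with $\alpha^2\le\alpha$, which is the delicate step. The localisation discarding the $dW$-integral and the finiteness of all integrals involved are routine given that $(Y,Z)$ and $(\bar{Y},\bar{Z})$ lie in $\mathcal{V}_\lambda^\beta[0,\tau]$.
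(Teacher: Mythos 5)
Your proposal is correct and follows essentially the same route as the paper's own proof: It\^o's formula applied to $\hat{Y}_t^2\mathrm{e}_t$ on $[t\wedge\tau,T\wedge\tau]$, the splitting of the $f$-increment into a $y$-monotone part (controlled by (\ref{eq:-172}) with $\kappa_1=K_1^2/2$) and a $z$-Lipschitz part, a Young inequality calibrated so that a positive fraction of $\int\hat{Z}_r^2\mathrm{e}_r\,d\langle W\rangle_r$ survives, and then the localization/Doob argument of Lemma \ref{lem:-2}. Your use of $\alpha^2\le\alpha\le1$ is an elementary substitute for the paper's observation that $\alpha\mapsto a-2\alpha\kappa_1+b\alpha^2K_1^2-2\beta_1$ is convex and negative at both endpoints, and your explicit treatment of the terminal term via (\ref{eq:-151}) (noting that in the relevant application $\xi=\bar{\xi}$, so no $\hat{\xi}$ term enters (\ref{eq:-182})) merely fills in a step the paper leaves implicit.
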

\begin{proof}
Let $\mathrm{e}_{t}=\exp(2\beta_{0}t+2\beta_{1}\langle W\rangle_{t})$,
and $\hat{g}(t)=g(t,Y_{t})-g(t,\bar{Y}_{t}),\;\hat{f}(t)=f(t,Y_{t},Z_{t})-f(t,\bar{Y}_{t},\bar{Z}_{t})$.
By Itô's formula and by using (\ref{eq:-173})\textendash (\ref{eq:-172}),
we have
\[
\begin{aligned}\hat{Y}_{t\wedge\tau}^{2}\mathrm{e}_{t\wedge\tau} & \le\hat{Y}_{T\wedge\tau}^{2}\mathrm{e}_{T\wedge\tau}+\int_{t\wedge\tau}^{T\wedge\tau}\big(2\hat{Y}_{r}\hat{g}_{0}(r)-2\beta_{0}\hat{Y}_{r}^{2}\big)\mathrm{e}_{r}dr\\
 & \quad+\int_{t\wedge\tau}^{T\wedge\tau}\big[2\hat{Y}_{r}\hat{f}_{0}(r)+2\alpha K_{1}|\hat{Y}_{r}|\,|\hat{Z}_{r}|-(2\alpha\kappa_{1}+2\beta_{1})\hat{Y}_{r}^{2}-\hat{Z}_{r}^{2}\big]\mathrm{e}_{r}\,d\langle W\rangle_{r}\\
 & \quad-2\int_{t\wedge\tau}^{T\wedge\tau}\hat{Y}_{r}\hat{Z}_{r}\mathrm{e}_{r}dW_{r}\\
 & \le\hat{Y}_{T\wedge\tau}^{2}\mathrm{e}_{T\wedge\tau}-\beta_{0}\int_{t\wedge\tau}^{T\wedge\tau}\hat{Y}_{r}^{2}\mathrm{e}_{r}dr+\frac{1}{\beta_{0}}\int_{t\wedge\tau}^{T\wedge\tau}\hat{g}_{0}(r)^{2}\mathrm{e}_{r}dr\\
 & \quad+\int_{t\wedge\tau}^{T\wedge\tau}\Big[\big(a-2\alpha\kappa_{1}+b\alpha^{2}K_{1}^{2}-2\beta_{1}\big)\hat{Y}_{r}^{2}+\Big(\frac{1}{b}-1\Big)\hat{Z}_{r}^{2}+\frac{1}{a}\hat{f}_{0}(r)^{2}\Big]\mathrm{e}_{r}d\langle W\rangle_{r}\\
 & \quad-2\int_{t\wedge\tau}^{T\wedge\tau}\hat{Y}_{r}\hat{Z}_{r}\mathrm{e}_{r}dW_{r},
\end{aligned}
\]
where $a$ and $b$ are positive constants to be determined. 

Since $\kappa_{1}=\frac{K_{1}^{2}}{2}$, we may choose $b>1$ sufficiently
close to $1$, and choose accordingly $a>0$ sufficiently small such
that $a-2\kappa_{1}+bK_{1}^{2}-2\beta_{1}<0$. Since $\alpha\mapsto a-2\alpha\kappa_{1}+b\alpha^{2}K_{1}^{2}-2\beta_{1}$
is convex and is negative at $\alpha=0$ and $\alpha=1$, we see that
$a-2\alpha\kappa_{1}+b\alpha^{2}K_{1}^{2}-2\beta_{1}<0$ for each
$\alpha\in[0,1]$. With such $a$ and $b$, (\ref{eq:-182}) follows
easily from an argument similar to the proof of (\ref{eq:-21}).
\end{proof}
\begin{cor}
\label{cor:-7}Let $g,f$ satisfy (\ref{eq:-173})\textendash (\ref{eq:-193}).
Then there exists an $\epsilon_{0}>0$, depending only on $K_{0},K_{1}$
and $\beta$, such that the following holds: If, for some $\alpha\in[0,1]$,
(\ref{eq:-169}) admits a unique solution $(Y,Z)$ in $\mathcal{V}_{\lambda}^{\beta}[0,\tau]$
such that
\begin{equation}
\begin{aligned}\Vert(Y,Z)\Vert_{\mathcal{V}_{\lambda}^{\beta}[0,\tau]}^{2} & \le C\mathbb{E}_{\lambda}\Big(\xi^{2}e^{2\beta_{0}\tau+2\beta_{1}\langle W\rangle_{\tau}}+\int_{0}^{\tau}\big(g_{0}(t)^{2}+g(t,0)^{2}\big)e^{2\beta_{0}t+2\beta_{1}\langle W\rangle_{t}}dt\\
 & \quad+\int_{0}^{\tau}\big(f_{0}(t)^{2}+f(t,0,0)^{2}\big)e^{2\beta_{0}t+2\beta_{1}\langle W\rangle_{t}}d\langle W\rangle_{t}\Big),
\end{aligned}
\label{eq:-185}
\end{equation}
for any $\xi$ satisfying (\ref{eq:-196}) and any $g_{0},f_{0}$
satisfying (\ref{eq:-150}), where $C>0$ is a constant depending
only on $K_{0},K_{1}$ and $\beta$, then the same is valid when replacing
$\alpha$ by $\alpha+\epsilon$ with $\epsilon\in[0,\epsilon_{0}]$
and $\alpha+\epsilon\le1$, and (\ref{eq:-185}) holds for a possibly
different constant $C>0$ depending only on $K_{0},K_{1}$ and $\beta$.
\end{cor}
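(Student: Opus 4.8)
The plan is to carry out a single step of the method of continuity: under the standing hypothesis that \eqref{eq:-169} is solvable at the parameter value $\alpha$ (with the quantitative bound \eqref{eq:-185}), I would produce the solution at $\alpha+\epsilon$ as the fixed point of a map whose contraction constant is controlled \emph{uniformly in} $\alpha$. Fix $\epsilon\ge 0$ with $\alpha+\epsilon\le 1$ and define $\Phi\colon\mathcal{V}_{\lambda}^{\beta}[0,\tau]\to\mathcal{V}_{\lambda}^{\beta}[0,\tau]$ by letting $\Phi(y,z)=(Y,Z)$ be the unique $\mathcal{V}_{\lambda}^{\beta}[0,\tau]$-solution of \eqref{eq:-169} at the \emph{same} parameter $\alpha$, with terminal value $\xi$ but with the frozen inhomogeneous terms $g_{0}+\epsilon\,g(\cdot,y_{\cdot})$ and $f_{0}+\epsilon\,f(\cdot,y_{\cdot},z_{\cdot})$ in place of $g_{0},f_{0}$; this solution exists by the hypothesis at $\alpha$. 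By the Lipschitz bound \eqref{eq:-174} one has $g(\cdot,y)^{2}\le 2g(\cdot,0)^{2}+\tfrac{K_{0}^{2}}{2}y^{2}$ and similarly for $f$, so the frozen sources satisfy the integrability \eqref{eq:-150} required for $\Phi$ to be well-defined, using \eqref{eq:-183} and $(y,z)\in\mathcal{V}_{\lambda}^{\beta}[0,\tau]$. A pair is a fixed point of $\Phi$ exactly when it solves \eqref{eq:-169} with $\alpha$ replaced by $\alpha+\epsilon$, so it suffices to exhibit a unique fixed point.

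For the contraction, given $(y,z)$ and $(\bar y,\bar z)$, the images $(Y,Z)=\Phi(y,z)$ and $(\bar Y,\bar Z)=\Phi(\bar y,\bar z)$ both solve the $\alpha$-BSDE with the common terminal $\xi$ and the \emph{same} nonlinearity $\alpha g,\alpha f$, differing only through the source differences $\hat g_{0}=\epsilon\big(g(\cdot,y)-g(\cdot,\bar y)\big)$ and $\hat f_{0}=\epsilon\big(f(\cdot,y,z)-f(\cdot,\bar y,\bar z)\big)$. The crucial point is that the difference estimate of Lemma \ref{prop:-16} — which applies here, possibly after the standard renormalization of the monotonicity constants to $\kappa_{0}=0,\ \kappa_{1}=\tfrac{K_{1}^{2}}{2}$ that \eqref{eq:-193} makes available by keeping the effective weights $\beta_{0}-\kappa_{0}$ and $\beta_{1}-\kappa_{1}+\tfrac{K_{1}^{2}}{2}$ positive — holds for \emph{every} $\beta\in[1,\infty)^{2}$ with a constant $C$ that does not depend on $\alpha$. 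Feeding the bounds on $\hat g_{0},\hat f_{0}$ from \eqref{eq:-174} into that estimate gives
\[
\Vert(Y-\bar Y,Z-\bar Z)\Vert_{\mathcal{V}_{\lambda}^{\beta}[0,\tau]}^{2}\le C\epsilon^{2}\,\mathbb{E}_{\lambda}\Big(\int_{0}^{\tau}\hat y_{r}^{2}\mathrm{e}_{r}\,dr+\int_{0}^{\tau}(\hat y_{r}^{2}+\hat z_{r}^{2})\mathrm{e}_{r}\,d\langle W\rangle_{r}\Big)\le C\epsilon^{2}\Vert(\hat y,\hat z)\Vert_{\mathcal{V}_{\lambda}^{\beta}[0,\tau]}^{2},
\]
where $\mathrm{e}_{r}=e^{2\beta_{0}r+2\beta_{1}\langle W\rangle_{r}}$ and $\hat\eta=\eta-\bar\eta$. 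Choosing $\epsilon_{0}>0$ with $C\epsilon_{0}^{2}<1$ (so $\epsilon_{0}$ depends only on $K_{0},K_{1},\beta$) makes $\Phi$ a contraction for all $\epsilon\in[0,\epsilon_{0}]$, and the Banach fixed point theorem furnishes the unique solution of \eqref{eq:-169} at $\alpha+\epsilon$.

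It remains to propagate the estimate \eqref{eq:-185} to $\alpha+\epsilon$. The fixed point $(Y,Z)$ solves the $\alpha$-BSDE with sources $g_{0}+\epsilon g(\cdot,Y)$ and $f_{0}+\epsilon f(\cdot,Y,Z)$, so \eqref{eq:-185} at $\alpha$ applies to these data. Expanding $(g_{0}+\epsilon g(\cdot,Y))^{2}\le 2g_{0}^{2}+4\epsilon^{2}g(\cdot,0)^{2}+\epsilon^{2}K_{0}^{2}Y^{2}$ by \eqref{eq:-174}, and likewise for the $f$-source, splits the right-hand side of \eqref{eq:-185} into the desired data terms plus a remainder bounded by $C\epsilon^{2}\Vert(Y,Z)\Vert_{\mathcal{V}_{\lambda}^{\beta}[0,\tau]}^{2}$. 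Shrinking $\epsilon_{0}$ further (still depending only on $K_{0},K_{1},\beta$) so that this remainder is at most $\tfrac12\Vert(Y,Z)\Vert_{\mathcal{V}_{\lambda}^{\beta}[0,\tau]}^{2}$ lets it be absorbed into the left-hand side, yielding \eqref{eq:-185} at $\alpha+\epsilon$ with a constant of the same dependence.

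The heart of the matter — and the reason the statement is phrased as it is — is the $\alpha$-uniformity of $\epsilon_{0}$: the step size must not degenerate as $\alpha$ grows, since otherwise finitely many continuation steps could not reach $\alpha=1$ in the eventual proof of Theorem \ref{thm:-2-1}. This uniformity rests entirely on the constant in Lemma \ref{prop:-16} being independent of $\alpha$, which in turn comes from the convexity-in-$\alpha$ argument in its proof (the coefficient of $\hat Y_{r}^{2}$ stays negative throughout $\alpha\in[0,1]$). I expect the remaining points to be bookkeeping: confirming that the frozen sources stay in the admissible class \eqref{eq:-150} so that the hypothesis at $\alpha$ is applicable, and noting that the smallness driving the contraction comes from $\epsilon$ rather than from enlarging $\beta$ — this is precisely what allows arbitrary $\beta$ satisfying \eqref{eq:-193}, instead of only large $\beta$ as in the deterministic-duration Lemma \ref{thm:-4}.
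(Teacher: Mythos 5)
Your proposal is correct and follows essentially the same route as the paper: the paper carries out exactly this continuation step by Picard iteration of your map $\Phi$ (started from $(0,0)$), with the contraction supplied by Lemma \ref{prop:-16}, whose constant is independent of $\alpha$, and $\epsilon_{0}$ chosen accordingly so the step size is uniform in $\alpha$. The only cosmetic differences are that the paper writes out the iteration instead of invoking the Banach fixed point theorem, and it propagates the bound (\ref{eq:-185}) through the geometric-series estimate $\Vert(Y,Z)\Vert_{\mathcal{V}_{\lambda}^{\beta}[0,\tau]}\le2\Vert(Y^{(1)},Z^{(1)})\Vert_{\mathcal{V}_{\lambda}^{\beta}[0,\tau]}$ rather than your absorption argument with a further shrinking of $\epsilon_{0}$.
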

\begin{proof}
Suppose that (\ref{eq:-169}) admits a unique solution in $\mathcal{V}_{\lambda}^{\beta}[0,\tau]$
satisfying (\ref{eq:-185}) for some $\alpha\in[0,1]$. Let $\epsilon>0$
and $(Y_{0},Z_{0})=(0,0)$. By (\ref{eq:-183}) and Lemma \ref{lem:-22},
define inductively $(Y^{(n)},Z^{(n)}),\;n\in\mathbb{N}_{+}$ as the
unique solution in $\mathcal{V}_{\lambda}^{\beta}[0,\tau]$ of the
following BSDE
\[
\left\{ \begin{aligned}dY_{t}^{(n)} & =-\big[g_{0}(t)+\epsilon g(t,Y_{t}^{(n-1)})+\alpha g(t,Y_{t}^{(n)})\big]dt\\
 & \quad-\big[f_{0}(t)+\epsilon f(t,Y_{t}^{(n-1)},Z_{t}^{(n-1)})+\alpha f(t,Y_{t}^{(n)},Z^{(n)})\big]d\langle W\rangle_{t}\\
 & \quad+Z_{t}^{(n)}dW_{t},\hspace{4em}t\in[0,\tau),\\
Y_{\tau}^{(n)} & =\xi.
\end{aligned}
\right.
\]
According to (\ref{eq:-185}) and Lemma \ref{prop:-16}, we have
\begin{equation}
\begin{aligned}\Vert(Y^{(1)},Z^{(1)})\Vert_{\mathcal{V}_{\lambda}^{\beta}[0,\tau]}^{2} & \le C\,\mathbb{E}_{\lambda}\Big(\xi^{2}e^{2\beta_{0}\tau+2\beta_{1}\langle W\rangle_{\tau}}+\int_{0}^{\tau}\big(g_{0}(t)^{2}+g(t,0)^{2}\big)e^{2\beta_{0}t+2\beta_{1}\langle W\rangle_{t}}dt\\
 & \quad+\int_{0}^{\tau}\big(f_{0}(t)^{2}+f(t,0,0)^{2}\big)e^{2\beta_{0}t+2\beta_{1}\langle W\rangle_{t}}d\langle W\rangle_{t}\Big),
\end{aligned}
\label{eq:-186}
\end{equation}
and
\[
\Vert(Y^{(n+1)}-Y^{(n)},Z^{(n+1)}-Z^{(n)})\Vert_{\mathcal{V}_{\lambda}^{\beta}[0,\tau]}\le\epsilon C\,\Vert(Y^{(n)}-Y^{(n-1)},Z^{(n)}-Z^{(n-1)})\Vert_{\mathcal{V}_{\lambda}^{\beta}[0,\tau]},\;\;n\in\mathbb{N}_{+},
\]
where $C>0$ is a constant depending only on $K_{0},K_{1}$ and $\beta$.
In particular, $C$ is independent of $\alpha$ or $\epsilon$. Let
$\epsilon_{0}=(4C)^{-1/2}$. Then for each $\epsilon\in[0,\epsilon_{0}]$
with $\alpha+\epsilon\le1$,
\[
\Vert(Y^{(n+1)}-Y^{(n)},Z^{(n+1)}-Z^{(n)})\Vert_{\mathcal{V}_{\lambda}^{\beta}[0,\tau]}\le2^{-n}\,\Vert(Y^{(1)},Z^{(1)})\Vert_{\mathcal{V}_{\lambda}^{\beta}[0,\tau]},\quad n\in\mathbb{N}_{+}.
\]
This implies that $\lim_{n\to\infty}\Vert(Y^{(n)}-Y,Z^{(n)}-Z)\Vert_{\mathcal{V}_{\lambda}^{\beta}[0,\tau]}=0$
for some $(Y,Z)\in\mathcal{V}_{\lambda}^{\beta}[0,\tau]$.

Clearly, $(Y,Z)$ is the unique solution in $\mathcal{V}_{\lambda}^{\beta}[0,\tau]$
for the BSDE obtained by replacing $\alpha$ by $\alpha+\epsilon$
in (\ref{eq:-169}). Moreover, $\Vert(Y,Z)\Vert_{\mathcal{V}_{\lambda}^{\beta}[0,\tau]}\le2\Vert(Y^{(1)},Z^{(1)})\Vert_{\mathcal{V}_{\lambda}^{\beta}[0,\tau]}$.
This, together with (\ref{eq:-186}), completes the proof.
\end{proof}
We now give the proof of Theorem \ref{thm:-2-1}.

\begin{proof}[Proof of Theorem \ref{thm:-2-1}]
(a) This can be proved similarly to Theorem \ref{thm:-1-1}-(a).

(b) Suppose first that $\kappa_{0}=0,\;\kappa_{1}=K_{2}^{2}/2$. By
Lemma \ref{lem:-22}, when $\alpha=0$,
\[
\left\{ \begin{aligned}dY_{t} & =-\alpha g(t,X_{t},Y_{t})dt-\alpha f(t,X_{t},Y_{t},Z_{t})d\langle W\rangle_{t}+Z_{t}dW_{t},\;\;t\in[0,\tau),\\
Y_{\tau} & =\xi,
\end{aligned}
\right.
\]
admits a unique solution $(Y,Z)\in\mathcal{V}_{\lambda}^{\beta}[0,\tau]$
satisfying $\Vert(Y,Z)\Vert_{\mathcal{V}_{\lambda}^{\beta}[0,\tau]}^{2}\le C\mathbb{E}_{\lambda}\big(\xi^{2}e^{2\beta_{0}\tau+2\beta_{1}\langle W\rangle_{\tau}}\big)$,
where and thereafter, $C>0$ denotes any instance of a generic constant
depending only on $\kappa_{0},\kappa_{1},K_{0},K_{1},\beta$.

By Corollary \ref{cor:-7}, there exists an $\epsilon_{0}>0$ depending
only on $K_{0},K_{1},\beta$ and satisfying the property stated in
Corollary \ref{cor:-7}. Applying Corollary \ref{cor:-7} successively
shows that \ref{eq:-146} admits a unique solution $(Y,Z)\in\mathcal{V}_{\lambda}^{\beta}[0,\tau]$
satisfying (\ref{eq:-194}).

For the general case, we use the exponential martingale $\tilde{\mathrm{e}}_{t}=\exp\big[-\kappa_{0}t+\big(\frac{K_{1}^{2}}{2}-\kappa_{1}\big)\langle W\rangle_{t}\big],\;t\ge0$,
and let
\[
\tilde{\xi}=\xi\tilde{\mathrm{e}}_{\tau},\;\tilde{g}(t,y)=g(t,y\tilde{\mathrm{e}}_{t})\tilde{\mathrm{e}}_{t}^{-1},\;\tilde{f}(t,y,z)=f(t,y\tilde{\mathrm{e}}_{t},z\tilde{\mathrm{e}}_{t})\tilde{\mathrm{e}}_{t}^{-1},
\]
for each $t\ge0,\;y,z\in\mathbb{R}$. Then $\tilde{g},\tilde{f}$
satisfy the assumptions of the above case. Let
\[
\tilde{\beta}_{0}=\beta_{0}-\kappa_{0}>0,\;\tilde{\beta}_{1}=\beta_{1}-\kappa_{1}+\frac{K_{1}^{2}}{2}>0.
\]
Then
\[
\left\{ \begin{aligned}d\tilde{Y}_{t} & =-\tilde{g}(t,\tilde{Y}_{t})dt-\tilde{f}(t,\tilde{Y}_{t},\tilde{Z}_{t})d\langle W\rangle_{t}+\tilde{Z}_{t}dW_{t},\;\;t\in[0,\tau),\\
\tilde{Y}_{\tau} & =\tilde{\xi},
\end{aligned}
\right.
\]
admits a unique solution $(\tilde{Y},\tilde{Z})\in\mathcal{V}_{\lambda}^{\tilde{\beta}}[0,\tau]$
such that
\begin{align*}
\begin{aligned}\Vert(\tilde{Y},\tilde{Z})\Vert_{\mathcal{V}_{\lambda}^{\tilde{\beta}}[0,\tau]}^{2} & \le C\,\mathbb{E}_{\lambda}\Big(\tilde{\xi}^{2}e^{2\tilde{\beta}_{0}\tau+2\tilde{\beta}_{1}\langle W\rangle_{\tau}}+\int_{0}^{\tau}\tilde{g}(t,0)^{2}e^{2\tilde{\beta}_{0}t+2\tilde{\beta}_{1}\langle W\rangle_{t}}dt\\
 & \quad+\int_{0}^{\tau}\tilde{f}(t,0,0)^{2}e^{2\tilde{\beta}_{0}t+2\tilde{\beta}_{1}\langle W\rangle_{t}}d\langle W\rangle_{t}\Big)\\
 & =C\,\mathbb{E}_{\lambda}\Big(\xi^{2}e^{2\beta_{0}\tau+2\beta_{1}\langle W\rangle_{\tau}}+\int_{0}^{\tau}g(t,0)^{2}e^{2\beta_{0}t+2\beta_{1}\langle W\rangle_{t}}dt\\
 & \quad+\int_{0}^{\tau}f(t,0,0)^{2}e^{2\beta_{0}t+2\beta_{1}\langle W\rangle_{t}}d\langle W\rangle_{t}\Big).
\end{aligned}
\end{align*}
Let $Y_{t}=\tilde{Y}_{t}\tilde{\mathrm{e}}_{t}^{-1},\,Z_{t}=\tilde{Z}_{t}\tilde{\mathrm{e}}_{t}^{-1},\;t\ge0$.
It is easily seen that $(Y,Z)\in\mathcal{V}_{\lambda}^{\beta}[0,\tau]$
is a solution of (\ref{eq:-146}), and $(Y,Z)$ satisfies (\ref{eq:-194}).
Thus we have completed the proof.
\end{proof}

\subsection{\label{subsec:-1}Example}

Let $\lambda\in\mathcal{P}(\mathbb{S})$, and let $\tau$ be an $\{\mathcal{F}_{t}^{\lambda}\}$-stopping
time such that $\tau\le T\;\mathbb{P}_{\lambda}\text{-a.s.}$ for
some $T>0$. We present a worked-out solution of linear BSDEs on $(\Omega,\{\mathcal{F}_{t}^{\lambda}\},\mathbb{P}_{\lambda})$.
In contrast with BSDEs on Euclidean spaces, linear BSDEs on $\mathbb{S}$
can take the following two forms:
\[
\left\{ \begin{aligned}dY_{t} & =-aY_{t}dt-cZ_{t}d\langle W\rangle_{t}+Z_{t}dW_{t},\;\;t\in[0,\tau),\\
Y_{\tau} & =\xi,
\end{aligned}
\right.
\]
and
\[
\left\{ \begin{aligned}dY_{t} & =-(aY_{t}+cZ_{t})d\langle W\rangle_{t}+Z_{t}dW_{t},\;\;t\in[0,\tau),\\
Y_{\tau} & =\xi,
\end{aligned}
\right.
\]
where $a,c\in\mathbb{R}$ are constants, and $\xi\in L^{p}(\mathcal{F}_{\tau}^{\lambda},\mathbb{P}_{\lambda})$
for some $p>2$. Clearly, these cases can be unified as
\begin{equation}
\left\{ \begin{aligned}dY_{t} & =-aY_{t}dt-(bY_{t}+cZ_{t})d\langle W\rangle_{t}+Z_{t}dW_{t},\;\;t\in[0,\tau),\\
Y_{\tau} & =\xi,
\end{aligned}
\right.\label{eq:-7}
\end{equation}
where $a,b,c\in\mathbb{R}$ are constants.

To solve (\ref{eq:-7}), let
\[
\Phi_{t}=\exp\Big[at+\Big(b-\frac{c^{2}}{2}\Big)\langle W\rangle_{t}+cW_{t}\Big],\quad t\ge0.
\]
Then, by Lemma \ref{lem:-30} and Corollary \ref{cor:}.(a), $\mathbb{E}_{\lambda}(\Phi_{t}^{q})<\infty$
for any $t\ge0$ and any $q>0$. 

By Itô's formula,
\begin{equation}
d\Phi_{t}=a\Phi_{t}dt+b\Phi_{t}d\langle W\rangle_{t}+c\Phi_{t}dW_{t},\quad t\ge0.\label{eq:-8}
\end{equation}
Furthermore, if $(Y,Z)$ is the solution of (\ref{eq:-7}) then, by
(\ref{eq:-7}) and (\ref{eq:-8}),
\[
d(\Phi_{t}Y_{t})=Y_{t}d\Phi_{t}+\Phi_{t}dY_{t}+d\langle\Phi,Y\rangle_{t}=\Phi_{t}(cY_{t}+Z_{t})dW_{t}.
\]
Therefore,
\[
\Phi_{t\wedge\tau}Y_{t\wedge\tau}=\Phi_{\tau}\xi-\int_{t\wedge\tau}^{\tau}\Phi_{r}(cY_{r}+Z_{r})dW_{r},\quad t\ge0.
\]
Taking conditional expectations on both sides of the above gives that
\[
\Phi_{t}Y_{t}=\Phi_{t\wedge\tau}Y_{t\wedge\tau}=\mathbb{E}_{\lambda}(\Phi_{\tau}\xi\vert\mathcal{F}_{t\wedge\tau}^{\lambda})=\mathbb{E}_{\lambda}(\Phi_{\tau}\xi\vert\mathcal{F}_{t}^{\lambda}),\quad t\ge0.
\]
Equivalently,
\begin{equation}
Y_{t}=\Phi_{t}^{-1}\mathbb{E}_{\lambda}(\Phi_{\tau}\xi\vert\mathcal{F}_{t}^{\lambda}),\quad t\ge0.\label{eq:-9}
\end{equation}

Since $\xi\in L^{p}(\mathcal{F}_{\tau}^{\lambda},\mathbb{P}_{\lambda})$
for some $p>2$, $\Phi_{\tau}\xi\in L^{2}(\mathcal{F}_{\tau}^{\lambda},\mathbb{P}_{\lambda})$.
Therefore, by Theorem \ref{thm:-2}, there exists a unique $\{\mathcal{F}_{t}^{\lambda}\}$-predictable
process $\zeta(t)$ such that
\begin{equation}
\mathbb{E}_{\lambda}(\Phi_{\tau}\xi\vert\mathcal{F}_{t}^{\lambda})=\mathbb{E}_{\lambda}(\Phi_{\tau}\xi)+\int_{0}^{t}\zeta(r)dW_{r},\quad t\ge0.\label{eq:-17}
\end{equation}
By a similar argument for (\ref{eq:-8}), we have
\[
d\Phi_{t}^{-1}=-a\Phi_{t}^{-1}dt-(b-c^{2})\Phi_{t}^{-1}d\langle W\rangle_{t}-c\Phi_{t}^{-1}dW_{t},\quad t\ge0.
\]
By (\ref{eq:-9}), (\ref{eq:-17}) and the above equation, 
\begin{equation}
\begin{aligned}dY_{t} & =\Phi_{t}^{-1}\zeta(t)dW_{t}+\mathbb{E}_{\lambda}(\Phi_{\tau}\xi\vert\mathcal{F}_{t}^{\lambda})d\Phi_{t}^{-1}-c\Phi_{t}^{-1}\zeta(t)d\langle W\rangle_{t}\\
 & =-aY_{t}dt-[bY_{t}+c(\Phi_{t}^{-1}\zeta(t)-cY_{t})]d\langle W\rangle_{t}+(\Phi_{t}^{-1}\zeta(t)-cY_{t})dW_{t}.
\end{aligned}
\label{eq:-34}
\end{equation}
Let
\begin{equation}
Z_{t}=\Phi_{t}^{-1}\zeta(t)-cY_{t}=\Phi_{t}^{-1}[\zeta(t)-c\mathbb{E}_{\lambda}(\Phi_{\tau}\xi\vert\mathcal{F}_{t}^{\lambda})],\quad t\ge0.\label{eq:-28}
\end{equation}
Then, by (\ref{eq:-34}), $(Y,Z)$ given by (\ref{eq:-9}) and (\ref{eq:-28})
is the solution of (\ref{eq:-7}).

\section{\label{sec:-4}Representations for solutions of semi-linear parabolic
PDEs}

In this section, we give the proof of Theorem \ref{thm:-3-1}.
\begin{proof}[Proof of Theorem \ref{thm:-3-1}]
We prove the theorem by several steps.\\
\\
\textbf{Step 1.}\emph{\enskip{}Let}
\[
g^{(s)}(r,x)=g(r+s,x,u(r+s,x)),\;f^{(s)}(r,x)=f(r+s,x,u(r+s,x),\nabla u(t+s,x)).
\]
\emph{Then, for any $\eta\in\mathcal{F}(\mathbb{S}\backslash\mathrm{V}_{0})$,
\begin{equation}
\begin{aligned}\frac{d}{dt}\mathbb{E}_{\mu}\big[u\big(t\wedge\sigma^{(s)}+s,X_{t\wedge\sigma^{(s)}}\big)\eta(X_{0})\big] & =\big\langle\partial_{t}u(t+s),P_{t}^{0}\eta\big\rangle_{\mu}\\
 & \quad-\mathcal{E}(u(t+s),P_{t}^{0}\eta)\;\;\mathit{a.e.}\;t\in[0,T],
\end{aligned}
\label{eq:-133}
\end{equation}
\begin{equation}
\frac{d}{dt}\mathbb{E}_{\mu}\Big[\Big(\int_{0}^{t\wedge\sigma^{(s)}}g^{(s)}(r,X_{r})\,dr\Big)\eta(X_{0})\Big]=\big\langle g^{(s)}(t),P_{t}^{0}\eta\big\rangle_{\mu}\;\;\mathit{a.e.}\ t\in[0,T],\label{eq:-139}
\end{equation}
and
\begin{equation}
\frac{d}{dt}\mathbb{E}_{\mu}\Big[\Big(\int_{0}^{t\wedge\sigma^{(s)}}f^{(s)}(r,X_{r})d\langle W\rangle_{r}\Big)\eta(X_{0})\Big]=\langle f^{(s)}(t),P_{t}^{0}\eta\rangle_{\nu}\;\;\mathit{a.e.}\;t\in[0,T].\label{eq:-140}
\end{equation}
}\\
\emph{Proof of Step 1}.\emph{\enskip{}}Without loss of generality,
we may assume $s=0$. Let $H(\varphi(t))$ be the harmonic function
with boundary value $\varphi(t)$ and $u^{0}(t,x)=u(t,x)-H(\varphi(t))(x)$.
Let $0<\delta<T-t$. Since
\[
u^{0}(t\wedge\sigma_{\mathrm{V}_{0}},X_{t\wedge\sigma_{\mathrm{V}_{0}}})=u^{0}(t,X_{t})1_{\{t<\sigma_{\mathrm{V}_{0}}\}},
\]
and $u^{0}(t)=0$ on $\mathrm{V}_{0}$, using the $\mu$-symmetry
of $\{X_{t}^{0}\}$, we obtain
\begin{equation}
\begin{aligned}\mathbb{E}_{\mu}\big[\big(u^{0}((t+\delta)\wedge\sigma_{\mathrm{V}_{0}},X_{(t+\delta)\wedge\sigma_{\mathrm{V}_{0}}})\eta(X_{0})\big] & =\mathbb{E}_{\mu}\big[u^{0}(t+\delta,X_{0}^{0})\eta(X_{t+\delta}^{0})\big]\\
 & =\big\langle u^{0}(t+\delta),P_{t+\delta}^{0}\eta\big\rangle_{\mu}.
\end{aligned}
\label{eq:-10}
\end{equation}
Similarly, $\mathbb{E}_{\mu}\big[\big(u^{0}(t\wedge\sigma_{\mathrm{V}_{0}},X_{t\wedge\sigma_{\mathrm{V}_{0}}})\eta(X_{0})\big]=\big\langle u^{0}(t),P_{t}^{0}\eta\big\rangle_{\mu}$.
Therefore,
\begin{equation}
\begin{aligned}\mathbb{E}_{\mu}\big[\big(u((t+\delta)\wedge\sigma_{\mathrm{V}_{0}},X_{(t+\delta)\wedge\sigma_{\mathrm{V}_{0}}}) & -u(t\wedge\sigma_{\mathrm{V}_{0}},X_{t\wedge\sigma_{\mathrm{V}_{0}}})\big)\eta(X_{0})\big]=\big\langle u^{0}(t+\delta)-u^{0}(t),P_{t+\delta}^{0}\eta\big\rangle_{\mu}\\
 & +\big\langle u^{0}(t),P_{t+\delta}^{0}\eta-P_{t}^{0}\eta\big\rangle_{\mu}+\langle H[\varphi(t+\delta)-\varphi(t)],P_{t}^{0}\eta\rangle_{\mu}.
\end{aligned}
\label{eq:-11}
\end{equation}

Note that $u^{0}(t)\in\mathcal{F}(\mathbb{S}\backslash\mathrm{V}_{0})$.
We have
\begin{equation}
\lim_{\delta\to0}\frac{1}{\delta}\big\langle u^{0}(t),P_{t+\delta}^{0}\eta-P_{t}^{0}\eta\big\rangle_{\mu}=-\mathcal{E}(u^{0}(t),P_{t}^{0}\eta)=-\mathcal{E}(u(t),P_{t}^{0}\eta),\label{eq:-12}
\end{equation}
where we have used in the second equality the fact that $\mathcal{E}(H(\varphi(t)),v)=0$
for any $v\in\mathcal{F}(\mathbb{S}\backslash\mathrm{V}_{0})$.

By Lemma \ref{lem:-1} and $\lim_{\delta\to0}\Vert P_{t+\delta}^{0}\eta-P_{t}^{0}\eta\Vert_{L^{2}(\mu)}=0$,
we deduce that
\begin{equation}
\begin{aligned}\lim_{\delta\to0}\frac{1}{\delta}\big\langle u^{0}(t+\delta)-u^{0}(t),P_{t+\delta}^{0}\eta\big\rangle_{\mu} & =\lim_{\delta\to0}\frac{1}{\delta}\big\langle u(t+\delta)-u(t),P_{t+\delta}^{0}\eta\big\rangle_{\mu}-\langle H(\partial_{t}\varphi(t)),P_{t}^{0}\eta\rangle_{\mu}\\
 & =\langle\partial_{t}u(t),P_{t}^{0}\eta\rangle_{\mu}-\langle H(\partial_{t}\varphi(t)),P_{t}^{0}\eta\rangle_{\mu}.
\end{aligned}
\label{eq:-25}
\end{equation}

The equality (\ref{eq:-133}) now follows readily from (\ref{eq:-11}),
(\ref{eq:-12}) and (\ref{eq:-25}).\bigskip{}

Similar to (\ref{eq:-10}), we have
\[
\mathbb{E}_{\mu}\Big[\Big(\int_{t\wedge\sigma_{\mathrm{V}_{0}}}^{(t+\delta)\wedge\sigma_{\mathrm{V}_{0}}}g^{(0)}(r,X_{r})\,dr\Big)\eta(X_{0})\Big]=\int_{0}^{\delta}\big\langle g^{(0)}(t+r),P_{t+r}^{0}\eta\big\rangle_{\mu}dr.
\]
Using the Lebesgue dominated convergence theorem and fact that $\lim_{r\to0}P_{t+r}^{0}\eta(x)=P_{t}^{0}\eta(x),\,x\in\mathbb{S}\backslash\mathrm{V}_{0}$,
we obtain (\ref{eq:-139}).\bigskip{}

We now prove (\ref{eq:-140}). Similar to the above, by the $\mu$-symmetry
of $\{X_{t}^{0}\}$ again, we have
\[
\mathbb{E}_{\mu}\Big[\Big(\int_{t\wedge\sigma_{\mathrm{V}_{0}}}^{(t+\delta)\wedge\sigma_{\mathrm{V}_{0}}}f^{(0)}(r,X_{r})\,d\langle W\rangle_{r}\Big)\eta(X_{0})\Big]=\mathbb{E}_{\mu}\Big[\Big(\int_{0}^{\delta}f^{(0)}(t_{n}+r,X_{r}^{0})d\langle W\rangle_{r}\Big)P_{t}^{0}\eta(X_{0}^{0})\Big].
\]
Now we apply Lemma \ref{lem:-28} and conclude that
\[
\frac{d}{dt}\mathbb{E}_{\mu}\Big[\Big(\int_{0}^{t}f^{(0)}(r,X_{r}^{0})\,d\langle W\rangle_{r}\Big)\eta(X_{0})\Big]=\lim_{\delta\to0}\frac{1}{\delta}\int_{0}^{\delta}\big\langle f^{(0)}(r+t),P_{t}^{0}\eta\big\rangle_{\nu}dr=\big\langle f^{(0)}(t),P_{t}^{0}\eta\big\rangle_{\nu}.
\]
This completes the proof of Step 1.\\
\\
\textbf{Step 2.}\emph{\enskip{}Let
\[
\begin{aligned}M_{t}^{(s)} & =u(t\wedge\sigma^{(s)}+s,X_{t\wedge\sigma^{(s)}})-u(s,X_{0})+\int_{0}^{t\wedge\sigma^{(s)}}g^{(s)}(r,X_{r})dr\\
 & \quad+\int_{0}^{t\wedge\sigma^{(s)}}f^{(s)}(r,X_{r})d\langle W\rangle_{r},\;\;t\ge0.
\end{aligned}
\]
Then $\{M_{t}^{(s)}\}$ is a $\mathbb{P}_{x}$-martingale for each
$x\in\mathbb{S}\backslash\mathrm{V}_{0}$.}\\
\emph{}\\
\emph{Proof of Step 2}.\emph{\enskip{}}By Step 1,
\[
\frac{d}{dt}\mathbb{E}_{\mu}\big[M_{t}^{(s)}\eta(X_{t_{0}}^{0})\big]=0\;\;\text{for all}\ \eta\in\mathcal{F}(\mathbb{S}\backslash\mathrm{V}_{0})\;\;\text{a.e.}\ t\ge t_{0},
\]
which, together with the continuity of $t\mapsto\mathbb{E}_{x}(M_{t}^{(s)})$,
implies that
\begin{equation}
\mathbb{E}_{x}\big(M_{t}^{(s)}\big)=0\;\;\text{for all}\ t\ge0\;\;\mu\text{-a.e.}\;x\in\mathbb{S}\backslash\mathrm{V}_{0}.\label{eq:-198}
\end{equation}

We claim that $x\mapsto\mathbb{E}_{x}(M_{t}^{(s)}),\;x\in\mathbb{S}\backslash\mathrm{V}_{0}$
is continuous, and therefore, (\ref{eq:-198}) holds for all $t\ge0$
and all $x\in\mathbb{S}\backslash\mathrm{V}_{0}$. Note that
\[
M_{t}^{(s)}=u(t+s,X_{t}^{0})-u(s,X_{0}^{0})+\int_{0}^{t}g^{(s)}(r,X_{r}^{0})\,dr+\int_{0}^{t}f^{(s)}(r,X_{r}^{0})\,d\langle W\rangle_{r},\;\;0\le t\le T-s.
\]
Recall that, in the above, we have used the convention that $\eta(\Delta)=0$
for any function $\eta$ on $\mathbb{S}$. Hence
\[
\begin{aligned}\mathbb{E}_{x}\big(M_{t}^{(s)}\big) & =P_{t}^{0}\big(u^{0}(t+s)\big)(x)-u^{0}(s,x)+\int_{0}^{t}\big[P_{r}^{0}\big(g^{(s)}(r)\big)(x)+P_{r}^{0}\big(f^{(s)}(r)\nu\big)(x)\big]dr.\end{aligned}
\]
Therefore, it suffices to prove the continuity of $x\mapsto\int_{0}^{t}P_{r}^{0}\big(f^{(s)}(r)\nu\big)(x)dr,\;x\in\mathbb{S}\backslash\mathrm{V}_{0}$.

By Lemma \ref{lem:-29}-(b) and (\ref{eq:-189}), we have
\begin{equation}
\begin{aligned}\Vert P_{r}^{0}\big(f^{(s)}(r)\nu\big)\Vert_{L^{\infty}} & \le C\,\min\{1,r^{-d_{s}/2}\}\Vert f^{(s)}(r)\Vert_{L^{1}(\nu)}\\
 & \le C\,\min\{1,r^{-d_{s}/2}\}\big[\Vert f(r+s,0,0)\Vert_{L^{1}(\nu)}+\Vert u(r+s)\Vert_{L^{1}(\nu)}\\
 & \quad+\Vert\nabla u(r+s)\Vert_{L^{1}(\nu)}\big]\\
 & \le C\,\min\{1,r^{-d_{s}/2}\}\big[1+\max_{[0,T]\times\mathbb{S}}|u|+\mathcal{E}(u(r+s))^{1/2}\big]
\end{aligned}
\label{eq:-31}
\end{equation}
for all $r>0$, where $C>0$ is a constant depending only on $K_{0},K_{1}$
and $\max_{[0,T]\times\mathbb{S}}|f(t,x,0,0)|$. Note that $\int_{0}^{T}\mathcal{E}(u(t))\,dt<\infty$,
and that, for each $r>0$,
\[
P_{r}^{0}\big(f^{(s)}(r)\nu\big)(x)=\int_{\mathbb{S}}f^{(s)}(r,y)p_{r}^{0}(x,y)\nu(dy)
\]
is continuous in $x\in\mathbb{S}\backslash\mathrm{V}_{0}$. Now the
continuity of $x\mapsto\int_{0}^{t}P_{r}^{0}\big(f^{(s)}(r)\nu\big)(x)dr$
follows readily from (\ref{eq:-31}) and the Lebesgue dominated convergence
theorem.

Therefore, $\mathbb{E}_{x}\big(M_{t}^{(s)}\big)=0$ for all $t\ge0,\,x\in\mathbb{S}\backslash\mathrm{V}_{0}$,
which, together with the Markov property of $\{X_{t}^{0}\}$, competes
the proof of Step 2.\\
\\
\textbf{Step 3.}\emph{\enskip{}For each $s\in[0,T)$ and each $x\in\mathbb{S}\backslash\mathrm{V}_{0}$,
\[
(Y_{t}^{(s)},Z_{t}^{(s)})=(u(t\wedge\sigma^{(s)}+s,X_{t\wedge\sigma^{(s)}}),\nabla u(t\wedge\sigma^{(s)}+s,X_{t\wedge\sigma^{(s)}}))
\]
is the solution of the BSDE (\ref{eq:-3}). Moreover, the representation
(\ref{eq:-22}) holds, and the solution of (\ref{eq:}) is unique.}\\
\emph{}\\
\emph{Proof of Step 3}.\emph{\enskip{}}By Lemma \ref{lem:},
\[
u(t+s,X_{t})=u(s,X_{0})+\int_{0}^{t}\nabla u(r+s,X_{r})dW_{r}+N_{t}^{(s)},\quad t\ge0,
\]
where $N^{(s)}$ is a continuous process with zero quadratic variation.
Let
\[
Q_{t}^{(s)}=N_{t\wedge\sigma^{(s)}}^{(s)}+\int_{0}^{t\wedge\sigma^{(s)}}g^{(s)}(r,X_{r})dr+\int_{0}^{t\wedge\sigma^{(s)}}f^{(s)}(r,X_{r})d\langle W\rangle_{r},\;\;t\ge0.
\]
Then
\begin{equation}
Q_{t}^{(s)}=M_{t}^{(s)}-\int_{0}^{t\wedge\sigma^{(s)}}\nabla u(r+s,X_{r})dW_{r},\;\;t\ge0,\label{eq:-197}
\end{equation}
and therefore $\{Q_{t}^{(s)}\}$ is a $\mathbb{P}_{x}$-martingale
for all $x\in\mathbb{S}\backslash\mathrm{V}_{0}$.

Let $t_{i}=\frac{i}{n}t,\;0\le i\le n$. Then
\[
\lim_{n\to\infty}\sum_{i=1}^{n}(Q_{t_{i}}^{(s)}-Q_{t_{i-1}}^{(s)})^{2}=\langle Q^{(s)}\rangle_{t}\;\;\mbox{in}\;L^{1}(\mathbb{P}_{x})\;\;\mbox{for each}\;x\in\mathbb{S}\backslash\mathrm{V}_{0}.
\]
Since $\lim_{n\to\infty}\mathbb{E}_{\mu}\big[\sum_{i=1}^{n}(N_{t_{i}}^{(s)}-N_{t_{i-1}}^{(s)})^{2}\big]=0$,
there exists a subsequence $\{n_{k}\}$ such that 
\[
\lim_{k\to\infty}\sum_{i=1}^{n_{k}}(N_{t_{i}}^{(s)}-N_{t_{i-1}}^{(s)})^{2}=0\;\;\mathbb{P}_{\mu}\mbox{-a.s.},
\]
which implies that $\langle Q^{(s)}\rangle_{t}=0\;\;\mathbb{P}_{\mu}$-a.s.,
as $t\mapsto\int_{0}^{t\wedge\sigma^{(s)}}g^{(s)}(r,X_{r})dr$ and
$t\mapsto\int_{0}^{t\wedge\sigma^{(s)}}f^{(s)}(r,X_{r})d\langle W\rangle_{r}$
are of bounded variation.

Therefore, $\mathbb{E}_{x}\big(\langle Q\rangle_{t}^{(s)}\big)=0\;\;\mu\mbox{-a.e.}\;x\in\mathbb{S}\backslash\mathrm{V}_{0}$.
By an argument similar to the proof of Step 2, it can be shown that
$x\mapsto\mathbb{E}_{x}\big(\langle Q\rangle_{t}^{(s)}\big)$ is continuous.
Therefore, $\mathbb{E}_{x}\big(\langle Q\rangle_{t}^{(s)}\big)=0$
for all $x\in\mathbb{S}\backslash\mathrm{V}_{0}$. In particular,
by (\ref{eq:-197}),
\[
\begin{aligned}u(t\wedge\sigma^{(s)}+s,X_{t\wedge\sigma^{(s)}}) & =u(s,X_{0})-\int_{0}^{t\wedge\sigma^{(s)}}g^{(s)}(r,X_{r})dr-\int_{0}^{t\wedge\sigma^{(s)}}f^{(s)}(r,X_{r})d\langle W\rangle_{r}\\
 & \quad+\int_{0}^{t\wedge\sigma^{(s)}}\nabla u(r+s,X_{r})dW_{r}\quad\mathbb{P}_{x}\text{-a.s.}\ \ \text{for all}\ x\in\mathbb{S}\backslash\mathrm{V}_{0}.
\end{aligned}
\]
This implies that 
\[
(Y_{t}^{(s)},Z_{t}^{(s)})=(u(t\wedge\sigma^{(s)}+s,X_{t\wedge\sigma^{(s)}}),\nabla u(t\wedge\sigma^{(s)}+s,X_{t\wedge\sigma^{(s)}}))
\]
is the unique solution of the BSDE (\ref{eq:-3}) on $\big(\Omega,\{\mathcal{F}_{t}\},\mathbb{P}_{x}\big)$
for $x\in\mathbb{S}\backslash\mathrm{V}_{0}$, which is clearly also
valid for $x\in\mathrm{V}_{0}$. As a result, we obtain the representation
(\ref{eq:-22}).

The uniqueness of the solution of (\ref{eq:}) follows immediately
from the representation (\ref{eq:-22}) and the uniqueness of solutions
of (\ref{eq:-3}).
\end{proof}
\begin{example*}
As an application of the representation formula given in Theorem \ref{thm:-3-1},
we solve the following parabolic equation
\[
\left\{ \begin{aligned}(\partial_{t}u & +\mathcal{L}u)\cdot\mu=-au\cdot\mu-(bu+c\nabla u)\cdot\nu,\;\;(t,x)\in[0,T)\times\mathbb{S}\backslash\mathrm{V}_{0},\\
u(t & ,x)=\varphi(t,x)\;\text{on}\;[0,T)\times\mathrm{V}_{0},\;\;u(T)=\psi,
\end{aligned}
\right.
\]
where $a,b,c\in\mathbb{R}$ are constants.

Let 
\[
\Psi(t,x)=\begin{cases}
\varphi(t,x), & \mbox{if }(t,x)\in[0,T)\times\mathrm{V}_{0},\\
\psi(x), & \mbox{if }(t,x)\in\{T\}\times\mathbb{S}\backslash\mathrm{V}_{0}.
\end{cases}
\]
By the example in Section \ref{subsec:-1}, we see that the solution
of the BSDE
\[
\left\{ \begin{aligned}dY_{t}^{(s)} & =-aY_{t}^{(s)}dt-(bY_{t}^{(s)}+cZ_{t}^{(s)})d\langle W\rangle_{t}+Z_{t}^{(s)}dW_{t},\;\;t\in[0,\sigma^{(s)}),\\
Y_{\sigma^{(s)}}^{(s)} & =\Psi(\sigma^{(s)},X_{\sigma^{(s)}}),
\end{aligned}
\right.
\]
on $\big(\Omega,\mathcal{F},\{\mathcal{F}_{t}\}_{t\ge0},\mathbb{P}_{x}\big)$
is given by
\[
Y_{t}^{(s)}=\Phi_{t\wedge\sigma^{(s)}}^{-1}\mathbb{E}_{x}\big[\Phi_{\sigma^{(s)}}\Psi\big(\sigma^{(s)},X_{\sigma^{(s)}}\big)\vert\mathcal{F}_{t}\big],\;\;t\in[0,T-s],
\]
where
\[
\Phi_{t}=\exp\Big[at+\Big(b-\frac{c^{2}}{2}\Big)\langle W\rangle_{t}+cW_{t}\Big],\quad t\ge0.
\]
Therefore
\[
u(t,x)=\mathbb{E}_{x}(Y_{0}^{(t)})=\mathbb{E}_{x}\big[\Phi_{(T-t)\wedge\sigma_{\mathrm{V}_{0}}}\Psi\big((T-t)\wedge\sigma_{\mathrm{V}_{0}},X_{(T-t)\wedge\sigma_{\mathrm{V}_{0}}}\big)\big],\;\;t\in[0,T],\;x\in\mathbb{S}.
\]
In particular, if $\varphi=0$, then
\[
u(t,x)=\mathbb{E}_{x}\big[\Phi_{T-t}\psi\big(X_{T-t}^{0}\big)\big],\;\;t\in[0,T],\;x\in\mathbb{S}.
\]
\end{example*}
\begin{rem}
\label{rem:-5}It is well known that, for $\mathbb{R}^{d}$, solutions
of BSDEs correspond to viscosity solutions of corresponding PDEs,
which is a very weak formulation of solutions. Moreover, Theorem \ref{thm:-3-1}
shows that solutions of BSDEs correspond to the solution of the PDE
(\ref{eq:}) whenever a solution exists. These justify to name the
functions given by (\ref{eq:-22}) the \emph{viscosity solutions}
of (\ref{eq:}). The existence of such very weak solutions is guaranteed
by Theorem \ref{thm:-1-1}. On the other hand, we also note that the
existence of solutions of BSDEs does not imply the existence of weak
solutions of the corresponding semi-linear parabolic equations. In
future work we shall explore the existence of weak solutions of (\ref{eq:})
on the Sierpinski gasket.
\end{rem}

\section*{Acknowledgements\addcontentsline{toc}{section}{Acknowledgements}}

The authors want to thank Professor Hino for his helpful suggestions
on the first version of the paper. We would also like to express our
gratitude to the anonymous referee for the invaluable comments and
suggestions (in particular, providing many related papers), which
help improve the quality of the presentation.

\renewcommand\bibname{References} 
\end{document}